\DeclareMathOperator{\B}{\mathbb{B}}
\DeclareMathOperator{\R}{\mathbb{R}}
\DeclareMathOperator{\Z}{\mathbb{Z}}
\DeclareMathOperator{\ESP}{\mathbb{E}}
\DeclareMathOperator{\Prob}{\mathbb{P}}
\def\Ld{\mathrm{L}^{2}}
\def\o{\omega}
\def\ga{\gamma}
\def\ii{+\infty}
\def\ind{\mathds{1}}
\def\unda{\underline{a}}
\def\ova{\overline{a}}
\newtheorem{thm}{Theorem}[section]
\newtheorem{lemma}{Lemma}[section]
\newtheorem{proposition}{Proposition}[section]
\newtheorem{definition}{Definition}[section]
\newtheorem{rem}{\textbf{Remark}}[section]
\title{A new Multifractional Process with Random Exponent}
\author{Antoine Ayache \\
Univ. Lille, CNRS, UMR 8524 - Laboratoire Paul Painlevé,\\ F-59000 Lille, France.\\
E-mail: \texttt{antoine.ayache@math.univ-lille1.fr}\\
\and
C\'eline Esser\\
Universit\'e de Li\`ege - Institut de Math\'ematique,\\ All\'ee de la D\'ecouverte 12, 4000 Li\`ege, Belgium\\
Email: \texttt{celine.esser@uliege.be}
\and
Julien Hamonier \\
Univ. Lille, CHU Lille, EA 2694 - Santé publique : \\ épidémiologie et qualité des soins, F-59000 Lille, France.\\
E-mail: \texttt{julien.hamonier@univ-lille2.fr}
 }
\date{}
\date{}
\begin{document}
\maketitle

\begin{abstract}
A first type of Multifractional Process with Random Exponent (MPRE) was constructed several years ago in \cite{ayache2005multifractional} by replacing in a wavelet series representation of Fractional Brownian Motion (FBM) the Hurst parameter by a random variable depending on the time variable. In the present article, we propose another approach for constructing another type of MPRE. It consists in substituting to the Hurst parameter, in a stochastic integral representation of the high-frequency part of FBM, a random variable depending on the integration variable. The MPRE obtained in this way offers, among other things, the advantages to have a representation through classical It\^o integral and to be less difficult to simulate than the first type of MPRE, previously introduced in \cite{ayache2005multifractional}. Yet, the study of H\"older regularity of this new MPRE is a significantly more challenging problem than in the case of the previous one. Actually, it requires to develop a new methodology relying on an extensive use of the Haar basis.
\end{abstract}

 \vspace{4ex}

\noindent {\bf MSC2010:}  60G17, 60G22\\
\noindent{\bf Keywords:} Fractional Brownian Motion, varying Hurst parameter, Haar basis, H\"older regularity, It\^o integral

\section{Introduction}
\label{sec:Intro}

Fractional Brownian Motion (FBM), which was introduced by Kolmogorov \cite{kolmogorov:1940} and made popular by Mandelbrot and Van Ness  
\cite{mandelbrot:van_ness:1968}, is one of the most important stochastic process in both theory and applications (see for instance \cite{samorodnitsky:taqqu:1994book,doukhan:oppenheim:taqqu:2002-livre}). This continuous centred Gaussian process $\{B_H(t): t\in I\}$, where $I$ denotes the closed interval $[0,1]$, depends on a deterministic constant parameter, denoted by $H$, belonging to the open interval $(0,1)$ and called the Hurst parameter. Let 
$\{B(s): s\in \R\}$ be a standard Brownian Motion on a complete probability space $(\Omega,\mathcal{F},\Prob)$ and let $\int (\cdot) dB$ be the associated Wiener integral. Then the FBM  $\{B_H(t): t\in I\}$ can be defined, for every $t\in I$, as:
\begin{equation}\label{def_fbm}
B_H(t):=\int_{-\infty}^0 \Big ((t-s)^{H-\frac{1}{2}}-(-s)^{H-\frac{1}{2}}\Big)dB(s)+\int_{0}^{t} (t-s)^{H-\frac{1}{2}}dB(s). 
\end{equation}
The first one of these two Wiener integrals is called the low-frequency part of FBM and the other one its high-frequency part. Roughness of paths of FBM is mainly due to its high-frequency part, which is also called the Riemann-Liouville process and denoted by $\{R_H (t):t\in I\}$. For the sake or clarity, let us point out that, for all $t\in I$, one has
\begin{equation}\label{def_RiemLiou}
R_H (t):=\int_{0}^{t} (t-s)^{H-\frac{1}{2}}dB(s). 
\end{equation}
Roughness of paths of $\{R_H (t):t\in I\}$, and consequently that of $\{B_H(t): t\in I\}$, is governed by the Hurst parameter $H$. More precisely, on any arbitrary non-degenerate compact interval included in $I$, the critical H\"older regularity of paths of these two processes is equal to $H$. Thus, in contrast with many real-life signals, roughness of paths of FBM is not allowed to change from one period of time to another which somehow restricts its areas of applicability. 

In spite of importance and usefulness of FBM as a random model in signal processing, the constancy and non-randomness of its Hurst parameter $H$ are serious limitations of it. This is the main motivation
behind construction and study of several classes of Multifractional Processes since the mid-1990s (see for instance \cite{ayache2005multifractional, ayache2007wavelet, Bi, benassi97,peltier95, stoev:06, surgailis:08} to mention just a few references). The paradigmatic example of such processes  is the Gaussian Multifractional Brownian Motion (MBM) of Benassi, Jaffard, L\'evy V\'ehel, Peltier and Roux \cite{benassi97,peltier95}, which is obtained simply by replacing in (\ref{def_fbm}) the constant Hurst parameter $H$ by a deterministic function $H(t)$, depending on the time variable $t$ in a continuous way. Observe that the assumption that $H(t)$ is deterministic, or more generally that the processes $\{H(t):t\in I\}$ and $\{B(t):t\in I\}$ are independent, is crucial. Indeed, as explained in \cite{ayache2005multifractional}, the stochastic integrals in (\ref{def_fbm}) fail to be well-defined, at least in the usual It\^o sense, when $H$ is replaced by a stochastic process $\{S(t): t\in I\}$ which is not independent on the Brownian Motion $\{B(s): s\in \R\}$. In order to overcome this difficulty, the article \cite{ayache2005multifractional} proposed to replace $H$ by $\{S(t): t\in I\}$ in an almost surely uniformly convergent random wavelet series representation of FBM, due to Meyer, Sellan and Taqqu \cite{meyer:sellan:taqqu:1999}, which is of a rather different nature from its stochastic integral representation (\ref{def_fbm}). Thus, the article \cite{ayache2005multifractional} was able to construct a first type of Multifractional Process with Random Exponent (MPRE) which, among other things, turned out to be useful in stock prices modelling, thanks to some papers by Bianchi and his co-authors (see for instance \cite{BiPP1,BiPP2,BiP1}).

The main goal of our present article is to propose another approach for constructing a new type of MPRE which, among other things, offers the advantages to have a representation through classical It\^o integral and to be less difficult to simulate than the first type of MPRE, previously introduced in \cite{ayache2005multifractional}. Yet, as we will see it in our article, the study of H\"older regularity of this new MPRE is a significantly more challenging problem than in the case of the previous one. Actually, it requires to develop a new methodology relying on an extensive use of the Haar basis \cite{haar,Dau92,Meyer92,Meyer90,wojt}, whose definition is recalled in (\ref{A:eq:haar}) in the next section, and which is sometimes called in French in a humorous way: "l'ondelette du pauvre" (the wavelet of the poor). The approach we propose in our present article is to a certain extent inspired by the one used by Surgailis in \cite{surgailis:08} which, roughly speaking, consists in replacing the constant Hurst parameter $H$ by a deterministic function $H(s)$ depending on the integration variable $s$ (and not on the time variable $t$). Yet, in our case, $H(s)$ is not only a deterministic function but more generally a stochastic process with continuous paths, denoted by  $\left\lbrace A(s): s\in I\right\rbrace$, which is assumed to be adapted to the natural filtration $(\mathcal{F}_s)_{s\in I}$ associated with the Brownian Motion $\{B(s):s\in I\}$; recall that $\mathcal{F}_s:=\sigma\left( B(u);0\le u \le s\right)$, for all $s\in I$. Another important assumption on $\left\lbrace A(s): s\in I\right\rbrace$
%
% Let us set $I:=[0,1]$ and let $\{B(u):u\in I\}$ be a standard Brownian Motion. We denote by $(\Omega,\mathcal{F},(\mathcal{F}_s)_{s\in I}, \Prob)$ a probability space where the $\sigma-$field $\mathcal{F}$ is defined by $\mathcal{F}:=\sigma\left( B(u);0\le u \le 1\right)$ and filtration $(\mathcal{F}_s)_{s\in I}$
%by $\mathcal{F}_s:=\sigma\left( B(u);0\le u \le s\right)$.
%
%One denotes by $\left\lbrace A(s): s\in I\right\rbrace$ a stochastic process with continuous paths, which is well adpated to the filtration $(\mathcal{F}_s)_{s\in I}$ and which 
is that it takes its values in $[\unda,\ova] $ where $\unda$ and $\ova$ are two deterministic arbitrary constants satisfying the inequalities:
\begin{equation}\label{eqn:bornesA}
\frac{1}{2} < \unda \le \ova <1 \, .
\end{equation}

The MPRE we study in our present article is denoted by $\lbrace X(t):t\in I \rbrace$ and obtained by substituting in (\ref{def_RiemLiou}) the process $\left\lbrace A(s): s\in I\right\rbrace$  to the Hurst parameter $H$. More precisely, $\lbrace X(t):t\in I \rbrace$ is defined, for all $t\in I$, as the It\^o integral:
\begin{equation}\label{def_X}
X(t)=\int_0^1 K_t(s) dB(s),
\end{equation}
where, for any $(t,s)\in I^2$,
\begin{equation}\label{def_Kt}
K_t(s):= (t-s)_+^{A(s)-\frac{1}{2}}=
\left\lbrace
\begin{array}{ll}
0 & \mbox{ if }t\le s\\
(t-s)^{A(s)-\frac{1}{2}} & \mbox{ otherwise.}
\end{array}
\right.
\end{equation}
One clearly has, for any $s\in I$, that $K_0(s)=0$ and therefore $X(0)=0$. Let us show that, for any fixed $t\in (0,1]$, the It\^o integral in~(\ref{def_X}), makes sense; that is the stochastic process $\{K_t (s):s\in I\}$ belongs to usual class of integrands for the It\^o integral over $I$.
\begin{enumerate}
\item From Relation~(\ref{def_Kt}), for any fixed $\o\in  \Omega$, the function $s\mapsto K_t(s,\o)$ is continuous on $I$. Therefore $(s,\o) \mapsto K_t(s,\o)$ is a measurable function from $(I\times \Omega,\B(I)\otimes\mathcal{F})$ to $(\R,\B(\R))$. 
\item For any $s\in I$, the random variable $K_t(s)$ is $\mathcal{F}_s$-measurable; this is obvious in view of (\ref{def_Kt}), since $A(s)$ is a $\mathcal{F}_s$-measurable random variable.
\item One has $K_t \in \Ld(I\times \Omega,\B(I)\otimes \mathcal{F})$; indeed, it follows from (\ref{def_Kt}) and the fact $A(s)\in [\unda,\ova]\subset (1/2,1)$, that 
\begin{eqnarray}
\int_{\Omega}\left(\int_0^1 |K_t(s,\o)|^2 ds \right) d\Prob &=& \int_{\Omega}\left(\int_0^t (t-s)^{2A(s)-1} ds \right) d\Prob \nonumber\\
&\le & \int_{\Omega}\left(\int_0^t (t-s)^{2\unda-1} ds \right) d\Prob <\ii\,.
\end{eqnarray}
\end{enumerate}

From now on, we assume, in addition, that there exist a constant $\rho \in (0,1]$ and a positive constant $c$ such that, for any $x,y\in I$, one has
\begin{equation}\label{hyp:Ahold1}
\ESP\big(|A(x)-A(y)|^2\big) \le c |x-y|^{2\rho}\,.
\end{equation}

Under this additional assumption, the stochastic process $\{X(t):t\in I\}$ has a modification whose paths are H\"older continuous function on $I$. More precisely, the following proposition holds.
%For the convenience of the reader, the proof of the next result is postponed in the Appendix.

\begin{proposition}\label{prop:holdun}
The process $\{X(t) : t\in I\}$ has a modification whose paths are H\"older continuous functions on $I$ of any order $\zeta\in (0,\unda-1/2)$. 
%Moreover, they are H\"older functions on $I$, with a uniform H\"older exponent $\zeta$, on $I$, satisfying
%of any order $\zeta$ satisfying
%$$
%0< \zeta < \unda-\frac{1}{2}.
%$$
\end{proposition}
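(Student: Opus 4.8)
The plan is to invoke the Kolmogorov continuity theorem, for which it suffices to control the moments of the increments $X(t)-X(t')$ of arbitrarily high order. A point to keep in mind is that, because $\{A(s):s\in I\}$ is random, the process $\{X(t):t\in I\}$ is no longer Gaussian, so the usual device of reducing $p$-th moments to the variance is unavailable; instead I would appeal to the Burkholder--Davis--Gundy (BDG) inequality for It\^o integrals. Writing, for $t,t'\in I$,
$$X(t)-X(t')=\int_0^1\big(K_t(s)-K_{t'}(s)\big)\,dB(s),$$
which is again a well-defined It\^o integral (the integrand is adapted and lies in $\Ld(I\times\Omega,\B(I)\otimes\mathcal{F})$ by linearity and the same verification already carried out for $K_t$ itself), BDG yields, for every $p\ge 2$, a finite constant $C_p$ such that
$$\ESP\big(|X(t)-X(t')|^p\big)\le C_p\,\ESP\Big(\Big(\int_0^1 |K_t(s)-K_{t'}(s)|^2\,ds\Big)^{p/2}\Big).$$

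The heart of the matter is then a purely deterministic (pathwise) estimate of the quadratic functional $\int_0^1|K_t(s)-K_{t'}(s)|^2\,ds$. Assuming without loss of generality $t'<t$ and setting $h:=t-t'\le 1$, I would split $[0,1]$ according to the three regimes $s\ge t$ (both kernels vanish), $t'\le s< t$ (where $K_{t'}(s)=0$), and $s<t'$ (both kernels positive). On the middle piece the integrand equals $(t-s)^{2A(s)-1}$ with exponent in $(0,1)$, so bounding $A(s)$ below by $\unda$ produces a contribution of order $h^{2\unda}$. On the last piece the key tool is the subadditivity of $u\mapsto u^{A(s)-1/2}$: since $A(s)-1/2\in(0,1)$ this function is concave and vanishes at the origin, so
$$\big|(t-s)^{A(s)-1/2}-(t'-s)^{A(s)-1/2}\big|\le (t-t')^{A(s)-1/2}\le h^{\,\unda-1/2}.$$
Integrating the square of this bound over $s\in[0,t']$ yields a contribution of order $h^{2\unda-1}$, which dominates the middle piece. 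Altogether I would obtain the deterministic bound, valid for every $\o\in\Omega$,
$$\int_0^1|K_t(s)-K_{t'}(s)|^2\,ds\le C\,|t-t'|^{2\unda-1}.$$

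Combining the last two displays gives $\ESP\big(|X(t)-X(t')|^p\big)\le C_p'\,|t-t'|^{\,p(\unda-1/2)}$ for all large $p$. The Kolmogorov continuity theorem then furnishes a modification whose paths are H\"older continuous of any order strictly below $(\unda-1/2)-1/p$; letting $p\to\infty$ covers the whole range $\zeta\in(0,\unda-1/2)$ and completes the proof. I expect the only genuine difficulty to be in organizing the pathwise kernel estimate cleanly, and in particular in exploiting the concavity/subadditivity of $u\mapsto u^{A(s)-1/2}$ to control the difference of two singular kernels carrying the \emph{same} random exponent; the probabilistic part reduces, through BDG, to this deterministic inequality. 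I note that this argument uses only the two-sided bound $\unda\le A(s)\le\ova$ and not the H\"older-type assumption~(\ref{hyp:Ahold1}); the latter, together with the finer Haar-based analysis announced in the introduction, should be what is needed to sharpen the regularity beyond $\unda-1/2$.
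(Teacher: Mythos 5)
Your proposal is correct, but it follows a genuinely different route from the paper's. The paper never goes beyond second moments: by the It\^o isometry it computes $\ESP\big(|X(t'')-X(t')|^2\big)$, splits the integral at $t'$ exactly as you do, but on the region $s<t'$ it rescales via the changes of variables $u=t'-s$, $v=u/(t''-t')$ and then treats the rescaled integral in two pieces --- on $v\in[0,1]$ it uses the very subadditivity inequality $|a^{\beta}-b^{\beta}|\le |a-b|^{\beta}$ that you invoke, while on $v\ge 1$ it applies the Mean Value Theorem to obtain the integrable tail $v^{2\ova-3}$ (this is where the upper bound $\ova<1$ is genuinely used). This finer analysis yields $\ESP\big(|X(t'')-X(t')|^2\big)\le c_1 |t''-t'|^{2\unda}$, whose exponent $2\unda$ exceeds $1$, so Kolmogorov--\v{C}entsov with $\alpha=2$ already delivers every order $\zeta\in(0,\unda-1/2)$: no higher moments, hence no BDG, are needed, contrary to what your opening remark suggests. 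Your scheme is internally consistent, though: applying subadditivity globally on $s<t'$ gives only the cruder pathwise bound $\int_0^1|K_{t''}(s)-K_{t'}(s)|^2\,ds\le C|t''-t'|^{2\unda-1}$, whose exponent is below $1$, so second moments alone would indeed not suffice for you, and BDG applied to the martingale $u\mapsto\int_0^u\big(K_{t''}(s)-K_{t'}(s)\big)dB(s)$ --- whose bracket is deterministically bounded, so all moments are finite --- correctly recovers the full range as $p\to\infty$ (a single modification serves all $p$, since continuous modifications of the same process are indistinguishable). What each approach buys: the paper's is probabilistically minimal (isometry only) at the price of the sharper deterministic estimate and of using $\ova<1$ in the tail; yours trades that estimate for heavier but entirely standard martingale machinery, requires only $\unda>1/2$ together with $A(s)-1/2<1$ for the concavity step, and your deterministic bracket bound in fact shows the increments are sub-Gaussian of scale $|t''-t'|^{\unda-1/2}$, slightly more information than the paper extracts at this stage. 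Your closing observation is also accurate: neither proof uses assumption (\ref{hyp:Ahold1}), which enters only in the Haar-based analysis leading to Theorem~\ref{thm:holder}.
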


Let us emphasize that throughout our article the process $\{X(t):t\in I\}$ is systematically identified with its modification with H\"older continuous paths introduced in Proposition~\ref{prop:holdun}. 

Also, it is worth mentioning that the proof of Proposition~\ref{prop:holdun}, which is given in the Appendix~\ref{sec:Appen}, mainly relies on the classical Kolmogorov-\v{C}entsov's continuity theorem (see \cite{karatzas2012brownian} for instance). It is well-known that the latter theorem is of simple use and of very great utility. However, it can hardly allow to obtain very precise results on H\"older regularity of non-Gaussian processes such as the MPRE $\{X(t) : t\in I\}$. Thus, one of the main goals of our present article is to derive, thanks to a different methodology relying on the use of the Haar basis, a much more precise result than Proposition~\ref{prop:holdun} on H\"older regularity of $\{X(t) : t\in I\}$, namely Theorem~\ref{thm:holder}. 

%The proof of Proposition
% In the rest of the document, we identify the process $X$ to this modification. Moreover, for every $(J,l) \in \Z_+^2$, we set $\delta_{J,l} := l2^{-J}$ %and we denote by $I_{J,l}$ the dyadic interval
%%\begin{equation}
%%I_{J,l} := \left[ \delta_{J,l}, \delta_{J,l+1} \right) \, .
%%\end{equation}
%and we denote by $\Delta B_{j,l}$ the increment
%\begin{equation}\label{def:delta}
%\Delta B_{j,l} = \left( B(\delta_{J,l+1})-B(\delta_{J,l}) \right)
%\end{equation}
%of $B$ on the dyadic interval $\left[ \delta_{J,l}, \delta_{J,l+1} \right) $.

The rest of our article is organized in the following way. In Section~\ref{sec:repX}, we introduce via the Haar basis a random series representation of the MPRE $\{X(t):t\in I\}$, and we derive the almost sure convergence of the series uniformly in $t\in I$. In Section~\ref{sec:Hol}, we assume in addition that the paths of $\{A(s): s\in I\}$ satisfy a uniform H\"older condition of an arbitrary order $\ga>1/2$, and under this additional assumption, we show that the pathwise uniform H\"older exponent of $\{X(t):t\in I\}$ on any arbitrary interval $[\nu_1,\nu_2]\subseteq I$ is almost surely bounded from below by $\min_{s\in [\nu_1,\nu_2]} A(s)$. This result somehow means that roughness of paths of $\{X(t):t\in I\}$ is governed by  $\{A(s):s\in I\}$ and thus is allowed to change from one period of time to another; in order to derive it, we make an essential use of the series representation of $\{X(t):t\in I\}$ via the Haar basis. In Section~\ref{sec:Sim}, thanks to the latter representation of $\{X(t):t\in I\}$, we give two simulation methods for this MPRE and we test them; our simulations tend to confirm the fact that roughness of paths of $\{X(t):t\in I\}$ does not remain everywhere the same and is closely connected to the values of $\{A(s):s\in I\}$. In the Appendix~\ref{sec:Appen} the proofs of some auxiliary results are given.

\section{Series representation via the Haar basis}
\label{sec:repX}

First, we recall that the Haar basis of $L^2([0,1])$ is the collection of functions: 
\begin{equation}
\label{A:eq:haar}
\left\{
\begin{array}{l}
\mathcal{U} := \ind_{[0,1)} \\
h_{j,k} := 2^{\frac{j}{2}} \left(\ind_{\big[ 2^{-j}k, 2^{-j}(k+\frac{1}{2})\big)} - \ind_{\big[ 2^{-j}(k+\frac{1}{2}), 2^{-j}(k+1)\big)} \right), \quad  j \in \Z_+\mbox{ and } k \in \{ 0, \dots, 2^j-1\}.
\end{array}
\right.
\end{equation}
Let us point out that, for each $j\in\Z_+$ and $k\in\{ 0, \dots, 2^j\}$, the dyadic number $k2^{-j}$ is frequently denoted by $\delta_{j,k}$. Moreover, when $k<2^j$, we frequently denote by $\Delta B_{j,k}$ the increment of the Brownian motion $\{B(s):s\in I\}$ on the dyadic interval $[ \delta_{j,k}, \delta_{j,k+1}) $, that is one has
\begin{equation}\label{def:delta}
\Delta B_{j,k} := B(\delta_{j,k+1})-B(\delta_{j,k}).
\end{equation}
 %Let us fix $(t,\omega) \in [0,1]\times \Omega$ and let us consider the Haar decomposition of the kernel function $K_t(\cdot,\omega)$: if one sets for every $J \in \N$
%\begin{equation}
%K^{(J)}_t(s, \omega) := \langle K_t(\cdot,\omega), U \rangle U(s) + \sum_{j =0}^J \sum_{k=0}^{2^j-1} \langle K_t(\cdot,\omega), H_{j,k}\rangle H_{j,k} (s) \, , \quad \forall s \in [0,1], 
%\end{equation}
%then $K^{(J)}_t(\cdot, \omega)$ converges uniformly to $K_t(\cdot,\omega)$ on $[0,1]$ as $J \to + \infty$. Let us show that the convergence also holds in $L^2([0,1] \times \Omega)$. Let us start with the following lemma.
The main result of the present section is the following theorem which gives a random series representation of the MPRE $\lbrace X(t):t\in I \rbrace$. We mention that, roughly speaking, this representation of $\lbrace X(t):t\in I \rbrace$ is obtained via the decomposition of the associated kernel function $K_t(\cdot,\omega)$ in the Haar basis. 

\begin{thm}\label{thm:Haarrepres}
Assume that the exponent $\rho$ (see (\ref{hyp:Ahold1})) satisfies 
\begin{equation}\label{cond2:rho}
 \frac{1}{2}<\rho \le 1\,.
\end{equation}
Then, there is an event $\Omega_{**} \subseteq \Omega$ of probability 1 such that, for every $\omega \in \Omega_{**}$, one has
\begin{equation}\label{eq:Haarrepres}
X(t,\omega) = \big\langle K_t(\cdot,\omega), \mathcal{U} \big\rangle \eta_0(\omega) + \sum_{j=0}^{+ \infty} \sum_{k=0}^{2^j-1} \big\langle K_t(\cdot,\omega), h_{j,k}\big\rangle \varepsilon_{j,k}(\omega) \, ,
\end{equation}
where the convergence holds uniformly in $t \in I$, and where the $\mathcal{N}(0,1)$ Gaussian random variables $\eta_0$ and $\varepsilon_{j,k}$, $j \in \Z_+$, $k \in \{0,\dots, 2^j-1\}$ are defined by
\begin{equation}\label{eq:eta}
\eta_0 := \int_0^1 \mathcal{U}(s) dB(s) = B(1) - B(0) = \Delta B_{0,0}
\end{equation}
and
\begin{align}\label{eq:epsilon}
\varepsilon_{j,k} := \int_0^1 h_{j,k}(s) dB(s) & = 2^{\frac{j}{2}} \Bigg( 2 B\left( 2^{-(j+1)}(2k+1) \right) - B\left( 2^{-j}k \right) -  B\left( 2^{-j}(k+1) \right) \Bigg) \nonumber \\
& = 2^{\frac{j}{2}} \left(\Delta B_{j+1,2k} - \Delta B_{j+1,2k+1} \right) \, .
\end{align}
\end{thm}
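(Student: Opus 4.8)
The plan is to exploit the fact that, for each fixed $\omega\in\Omega$, the kernel $s\mapsto K_t(s,\omega)$ belongs to $\Ld(I)$ (this was checked in the excerpt) and that the collection $\{\mathcal{U}\}\cup\{h_{j,k}\}$ is a complete orthonormal system of $\Ld(I)$. Hence $K_t(\cdot,\omega)$ equals its Haar series in $\Ld(I)$, and the whole issue is to transfer this $\Ld(I)$-convergence, through the Itô integral, into a convergence of the corresponding series of random variables. The natural first move is to identify the partial sums. Writing $S_J(t,\omega)$ for the truncation of the right-hand side of (\ref{eq:Haarrepres}) at scale $j=J$, the coefficients $\langle K_t,\mathcal{U}\rangle$ and $\langle K_t,h_{j,k}\rangle$ with $j\le J$ are exactly those of the orthogonal projection of $K_t(\cdot,\omega)$ onto the space of functions that are constant on each dyadic interval $[\delta_{J+1,\ell},\delta_{J+1,\ell+1})$, $0\le\ell<2^{J+1}$. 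Since on such an interval this projection takes the value of the dyadic average $\bar K^{(J+1)}_\ell:=2^{J+1}\int_{\delta_{J+1,\ell}}^{\delta_{J+1,\ell+1}}K_t(u)\,du$, and since $\int h_{j,k}\,dB=\varepsilon_{j,k}$, $\int\mathcal{U}\,dB=\eta_0$, one gets the block form
\[
S_J(t)=\sum_{\ell=0}^{2^{J+1}-1}\bar K^{(J+1)}_\ell\,\Delta B_{J+1,\ell}.
\]
It thus suffices to prove that $S_J(t)\to X(t)$, first for fixed $t$ in $\Ld(\Omega)$ and then uniformly in $t$ almost surely.

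For the fixed-$t$ convergence in $\Ld(\Omega)$, I would compare $S_J(t)$ with the non-anticipating Riemann--Itô sum $\widetilde S_J(t):=\sum_{\ell}K_t(\delta_{J+1,\ell})\,\Delta B_{J+1,\ell}$. Because $s\mapsto K_t(s)$ is continuous, adapted and bounded in $\Ld(I\times\Omega)$ (see the three points established in the excerpt), the left-endpoint sums $\widetilde S_J(t)$ converge in $\Ld(\Omega)$ to the Itô integral $X(t)$ by the standard construction of the Itô integral for continuous adapted integrands. The remaining task is therefore to show that the correction $S_J(t)-\widetilde S_J(t)=\sum_\ell c_\ell\,\Delta B_{J+1,\ell}$, with $c_\ell:=\bar K^{(J+1)}_\ell-K_t(\delta_{J+1,\ell})$, tends to $0$ in $\Ld(\Omega)$. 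Expanding the second moment, the diagonal part $\sum_\ell\ESP[c_\ell^2(\Delta B_{J+1,\ell})^2]$ is controlled by the $\Ld$-oscillation of $K_t$ over intervals of length $2^{-(J+1)}$, which is small thanks to $A\ge\unda>1/2$ and the Hölder condition (\ref{hyp:Ahold1}). The off-diagonal part is more delicate: since $\bar K^{(J+1)}_\ell$ depends on the values of $A$ inside $[\delta_{J+1,\ell},\delta_{J+1,\ell+1})$, it is correlated with $\Delta B_{J+1,\ell}$, so the terms do not vanish by mere orthogonality. I would organise the sum according to the filtration $(\mathcal{F}_s)$, conditioning each term on $\mathcal{F}_{\delta_{J+1,\ell'}}$ for the later index, reducing everything to the anticipating conditional expectations $\ESP[\bar K^{(J+1)}_{\ell'}\Delta B_{J+1,\ell'}\mid\mathcal{F}_{\delta_{J+1,\ell'}}]$; these are bounded, via Cauchy--Schwarz and (\ref{hyp:Ahold1}), by a power of $2^{-(J+1)}$ governed by $\rho$, and the hypothesis $\rho>1/2$ is what makes the resulting double sum tend to $0$.

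To pass from fixed-$t$ convergence to almost sure convergence uniform in $t$, I would bound the supremum over $t\in I$ of the tail $\sum_{j>J}\sum_k\langle K_t,h_{j,k}\rangle\varepsilon_{j,k}$. Two ingredients combine here. First, since $\eta_0$ and the $\varepsilon_{j,k}$ are $\mathcal{N}(0,1)$, a Borel--Cantelli argument based on the Gaussian tail gives, almost surely, $\max_{0\le k<2^j}|\varepsilon_{j,k}|\le C\sqrt{j+1}$ for all large $j$. Second, the random Haar coefficients decay: for each $t$ only the indices $k$ with $\delta_{j,k}<t$ contribute, and the regularity of $u\mapsto K_t(u)$ --- which, by Kolmogorov--\v{C}entsov applied to (\ref{hyp:Ahold1}) with $\rho>1/2$, inherits from $A$ a pathwise Hölder modification of positive order, together with $A\ge\unda>1/2$ --- yields a bound of the form $\sup_{t\in I}\sum_k|\langle K_t,h_{j,k}\rangle|\le C(\omega)\,2^{-j\theta}$ for some $\theta>0$ on an event of probability $1$. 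Multiplying these two estimates and summing the geometric-type series in $j$ shows that the tail tends to $0$ uniformly in $t$ on a full-probability event $\Omega_{**}$; the uniform limit then coincides with $X(t)$ because it already equals $X(t)$ in $\Ld(\Omega)$ for each fixed $t$.

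The step I expect to be the genuine obstacle is the control of the off-diagonal anticipating terms in the second paragraph. They are precisely the manifestation of the tension between the non-anticipating nature of the Itô integral and the fact that the Haar projection replaces $K_t$ by its within-interval averages, which see the Brownian increments they are integrated against. It is exactly here --- and in securing the positive pathwise Hölder regularity of $A$ needed for the coefficient decay in the third paragraph --- that the strengthened assumption $\rho>1/2$ (rather than merely $\rho\in(0,1]$ as in Proposition~\ref{prop:holdun}) enters in an essential way.
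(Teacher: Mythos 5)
Your architecture (identify the Haar partial sums with dyadic-average Riemann sums, compare with the adapted left-endpoint sums $\widetilde{X}^J$, then combine a $\sqrt{j+1}$ bound on the $\varepsilon_{j,k}$ with decay of the coefficient sums) is the paper's, but your third paragraph contains a genuine gap that breaks the proof as written. The claimed bound $\sup_{t\in I}\sum_k\big|\langle K_t,h_{j,k}\rangle\big|\le C(\omega)\,2^{-j\theta}$ with $\theta>0$ does \emph{not} follow from the pathwise regularity that Kolmogorov--\v{C}entsov extracts from (\ref{hyp:Ahold1}): applied with $p=2$ and $\ESP\big(|A(x)-A(y)|^2\big)\le c|x-y|^{2\rho}$, that theorem yields H\"older paths of $A$ only of order $\theta_A<\rho-\frac{1}{2}\le\frac{1}{2}$. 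Feeding this into the coefficient estimate, the $A$-dependent contribution is $2^{\frac{j}{2}}\int_0^{1-2^{-j-1}}\big|A(s)-A(s+2^{-j-1})\big|\,ds\le C(\omega)\,2^{j(\frac{1}{2}-\theta_A)}$, which \emph{grows} geometrically in $j$ since $\theta_A<\frac{1}{2}$; no admissible $\rho\le 1$ repairs this, because the pathwise route loses half a derivative. The paper avoids exactly this loss by never passing to pathwise regularity of $A$ in this step: in Proposition~\ref{prop:unif} it bounds $\ESP\big(C_*\int_0^{1-2^{-j-1}}|A(s)-A(s+2^{-j-1})|\,ds\big)$ by Cauchy--Schwarz directly through the moment condition (\ref{hyp:Ahold1}), obtaining the full rate $2^{-\rho j}$, so that $\sum_j 2^{\frac{j}{2}}\sqrt{j+1}\,2^{-\rho j}<\infty$ precisely under (\ref{cond2:rho}); almost sure finiteness of $\sum_j\sup_t\sum_k|\langle K_t,h_{j,k}\rangle|\,|\varepsilon_{j,k}|$ then follows from its finite expectation, with no Borel--Cantelli along $j$ and no pathwise H\"older constant for $A$. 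Your argument is repaired by substituting this expectation computation for the Kolmogorov--\v{C}entsov step.

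A secondary remark on your second paragraph: the off-diagonal obstacle you single out is real but self-inflicted. Because $\overline{K}^{(J+1)}_\ell$ anticipates $\Delta B_{J+1,\ell}$, the second-moment expansion produces non-vanishing cross terms and your conditioning scheme is delicate to execute. The paper sidesteps this entirely by settling for $L^1(\Omega)$ convergence of the correction (Lemma~\ref{lem:conv:XJtXt}): the triangle inequality followed by termwise Cauchy--Schwarz gives $\ESP\big|X^J(t)-\widetilde{X}^J(t)\big|\le 2^{-\frac{J}{2}}\sum_l\ESP\big(|\overline{K}^{J,l}_t-K_t(\delta_{J,l})|^2\big)^{\frac{1}{2}}\le c_2\,2^{-J(\rho-\frac{1}{2})}$, using no orthogonality and no adaptedness of the averaged coefficients; identifying the limit in $L^1$ suffices since the uniform almost sure convergence is established independently. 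Even if your $L^2$ plan can be pushed through (after taking absolute values it essentially reduces to the square of this $L^1$ bound), it buys nothing over the simpler route.
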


Our first goal is to show that the convergence in (\ref{eq:Haarrepres}) holds, for each fixed $t\in I$, in $L^1(\Omega)$. To this end, we need some preliminary results. 

\begin{lemma}\label{lem_accroissK}
There exists a deterministic constant $c_0>0$ such that, for every real numbers $s',s'',t$ satisfying
\begin{equation}\label{eq:cond:lem_accroissK}
0 \leq s' \leq s'' < t \leq 1,
\end{equation}
one has
\begin{equation}\label{eq:lem_accroissK}
\big|K_t(s') - K_t(s'') \big| \leq c_0 \left( (t-s'')^{\underline{a}- \frac{3}{2} }(s''-s') + \big|A(s') - A(s'') \big| \right).
\end{equation}
%In particular, for every real numbers $s',s'',t$ satisfying (\ref{eq:cond:lem_accroissK}) and every $\omega \in \Omega$,
%\begin{equation}\label{eq:lem_accroissK2}
%\big|K_t(s',\omega) - K_t(s'',\omega) \big|^2 \leq 2 c_0 \left( (t-s'')^{2\underline{a}- 3 }(s''-s')^2 + \big|A(s',\omega) - A(s'',\omega) \big|^2 \right).
%\end{equation}
\end{lemma}

The proof of Lemma~\ref{lem_accroissK} is given in the Appendix~\ref{sec:Appen}.

\begin{rem}
Let us mention that, for all $t \in I$, for every $j \in \Z_+$, and for each $k \in \{0,\dots,2^j-1\}$, one has
\begin{align}\label{eq:rem:Kt:hjk}
\big\langle K_t, h_{j,k}\big\rangle  = & 2^{\frac{j}{2}}  \int_0^1 K_t(s)  \left(\ind_{\big[ 2^{-j}k, 2^{-j}(k+\frac{1}{2})\big)} - \ind_{\big[ 2^{-j}(k+\frac{1}{2}), 2^{-j}(k+1)\big)} \right) ds  \nonumber \\
 = & 2^{\frac{j}{2}}   \left( \int_{2^{-j}k}^{2^{-j}(k+\frac{1}{2})} K_t(s)  ds  - \int_{2^{-j}(k+\frac{1}{2})}^{2^{-j}(k+1)} K_t(s) \, ds \right) \nonumber \\
 = & 2^{\frac{j}{2}}   \int_{2^{-j}k}^{2^{-j}(k+\frac{1}{2})} \big(K_t(s) - K_t(s+2^{-j-1}) \big) \,  ds  \, .
\end{align}
Therefore, using the inequalities $0 \leq K_t(s,\omega) \leq 1$ one gets that
\begin{equation}\label{eq:rem:Kt:hjk2}
\left| \big\langle K_t, h_{j,k}\big\rangle \right| \leq 2^{-\frac{j}{2}} \, .
\end{equation}
\end{rem}

The partial sums of the series in (\ref{eq:Haarrepres}) are defined in the following way:

\begin{definition}
For all $t\in I$ and for each $J\in\Z_+$, one sets
\begin{equation}\label{eqn:XJt2}
X^J(t):= \big\langle K_t, \mathcal{U} \big\rangle \eta_0 + \sum_{j=0}^{J-1} \sum_{k=0}^{2^j-1} \big\langle K_t, h_{j,k}\big\rangle \varepsilon_{j,k} \, ,
\end{equation}
with the convention that when $J=0$ one has
\[
X^0 (t):= \big\langle K_t, \mathcal{U} \big\rangle \eta_0.
\]
%Recall that  the $\mathcal{N}(0,1)$ Gaussian random variables $\eta_0$ and $\varepsilon_{j,k}$ are defined through (\ref{eq:eta}) and (\ref{eq:epsilon}).
\end{definition}

The following proposition provides an alternative expression of the process $\{ X^J(t) : t \in I \}$. 
%in terms of the increments $\Delta B_{j,k}$ (\ref{def:delta}) of $B$ and the mean value of the kernel function on the corresponding dyadic interval. 

\begin{proposition}\label{prop:mvKt}
For every $J\in\Z_+$ and for all $l\in \{0,\ldots,2^J-1\}$, let $\overline{K}_t^{J,l}$ be the mean value of the function $s\mapsto K_t(s)$ on the dyadic interval $[\delta_{J,l},\delta_{J,l+1}]$ defined as:
\begin{equation}\label{eqn:mvKt}
\overline{K}_t^{J,l}:=2^J\int_{\delta_{J,l}}^{\delta_{J,l+1}} K_t(s) ds.
\end{equation}
Then, for every $J\in\Z_+$ and for all $l\in \{0,\ldots,2^J-1\}$, one has
\begin{equation}
\label{eq:alignement1}
0\le\overline{K}_t^{J,l}\le 1\quad\quad\mbox{and}\quad\quad \overline{K}_t^{J,l}=\dfrac{\overline{K}_t^{J+1,2l}+\overline{K}_t^{J+1,2l+1}}{2}\,.
\end{equation}
%\begin{enumerate}[label=\alph*)]
%\item $\overline{K}_t^{J,l} \in [0,1]$ ;
%\item $\overline{K}_t^{J,l}=\dfrac{\overline{K}_t^{J+1,2l}+\overline{K}_t^{J+1,2l+1}}{2}$.
%\end{enumerate}
Moreover, for all $t\in I$ and for each $J\in\Z_+$, $X^J(t)$ can be expressed as:
\begin{equation}\label{eqn:XJt3}
X^J(t)= \sum_{l=0}^{2^J-1} \overline{K}_t^{J,l} \Delta B_{J,l} \, ,
\end{equation}
where $\Delta B_{J,l}$ is the increment of the Brownian motion $B$ defined through (\ref{def:delta}) with $j=J$ and $k=l$.
%where, for any $l\in \{0,\ldots, 2^J-1\},$ we set $\Delta B_{J,l} = \left( B(\delta_{J,l+1})-B(\delta_{J,l}) \right)$ .

\end{proposition}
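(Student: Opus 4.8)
The plan is to establish the three assertions in order, the first two being essentially immediate and the third, formula~(\ref{eqn:XJt3}), being the substantive one. To prove the double inequality and the two-scale relation in~(\ref{eq:alignement1}), I would first note that the bounds $0\le \overline{K}_t^{J,l}\le 1$ follow at once from the observation already recorded in the Remark that $0\le K_t(s,\omega)\le 1$ for every $s$: since $\overline{K}_t^{J,l}$ is, by~(\ref{eqn:mvKt}), exactly the average of $K_t$ over the interval $[\delta_{J,l},\delta_{J,l+1}]$ of length $2^{-J}$, it inherits these bounds. The two-scale relation is pure additivity of the integral: the interval $[\delta_{J,l},\delta_{J,l+1}]$ is the union of its two dyadic halves $[\delta_{J+1,2l},\delta_{J+1,2l+1}]$ and $[\delta_{J+1,2l+1},\delta_{J+1,2l+2}]$, because $\delta_{J,l}=\delta_{J+1,2l}$ and $\delta_{J,l+1}=\delta_{J+1,2l+2}$; splitting the integral in~(\ref{eqn:mvKt}) accordingly and factoring out the powers of $2$ yields $\overline{K}_t^{J,l}=\frac12(\overline{K}_t^{J+1,2l}+\overline{K}_t^{J+1,2l+1})$.

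For~(\ref{eqn:XJt3}) I would argue by induction on $J$. For the base case $J=0$, definition~(\ref{eqn:XJt2}) gives $X^0(t)=\langle K_t,\mathcal{U}\rangle\eta_0$, and since $\mathcal{U}=\ind_{[0,1)}$ one has $\langle K_t,\mathcal{U}\rangle=\overline{K}_t^{0,0}$ while $\eta_0=\Delta B_{0,0}$ by~(\ref{eq:eta}), so the formula holds. For the inductive step, note from~(\ref{eqn:XJt2}) that
\[
X^{J+1}(t)-X^{J}(t)=\sum_{k=0}^{2^J-1}\big\langle K_t,h_{J,k}\big\rangle\,\varepsilon_{J,k}.
\]
Using the identifications $2^{-J}k=\delta_{J+1,2k}$, $2^{-J}(k+\tfrac12)=\delta_{J+1,2k+1}$, $2^{-J}(k+1)=\delta_{J+1,2k+2}$, the computation~(\ref{eq:rem:Kt:hjk}) together with~(\ref{eqn:mvKt}) gives $\langle K_t,h_{J,k}\rangle=2^{-J/2-1}(\overline{K}_t^{J+1,2k}-\overline{K}_t^{J+1,2k+1})$, while~(\ref{eq:epsilon}) gives $\varepsilon_{J,k}=2^{J/2}(\Delta B_{J+1,2k}-\Delta B_{J+1,2k+1})$. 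Multiplying, the powers of $2$ cancel and the $k$-th term becomes $\frac12(\overline{K}_t^{J+1,2k}-\overline{K}_t^{J+1,2k+1})(\Delta B_{J+1,2k}-\Delta B_{J+1,2k+1})$.

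The remaining step, where the actual content lies, is to recombine this increment with the inductive hypothesis $X^J(t)=\sum_{k}\overline{K}_t^{J,k}\Delta B_{J,k}$. Here I would feed in the two two-scale relations: the mean relation from~(\ref{eq:alignement1}) and the increment relation $\Delta B_{J,k}=\Delta B_{J+1,2k}+\Delta B_{J+1,2k+1}$, which is immediate from~(\ref{def:delta}) since $\delta_{J,k}=\delta_{J+1,2k}$ and $\delta_{J,k+1}=\delta_{J+1,2k+2}$. Writing $a=\overline{K}_t^{J+1,2k}$, $b=\overline{K}_t^{J+1,2k+1}$, $c=\Delta B_{J+1,2k}$, $d=\Delta B_{J+1,2k+1}$, the $k$-th term of $X^J(t)$ equals $\frac12(a+b)(c+d)$ and the $k$-th term of the increment equals $\frac12(a-b)(c-d)$; their sum collapses via the elementary identity $\frac12(a+b)(c+d)+\frac12(a-b)(c-d)=ac+bd$ into $\overline{K}_t^{J+1,2k}\Delta B_{J+1,2k}+\overline{K}_t^{J+1,2k+1}\Delta B_{J+1,2k+1}$. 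Summing over $k\in\{0,\dots,2^J-1\}$ reindexes exactly to $\sum_{l=0}^{2^{J+1}-1}\overline{K}_t^{J+1,l}\Delta B_{J+1,l}$, which closes the induction. I expect no genuine obstacle: the only point requiring care is the bookkeeping of dyadic indices and the cancellation of the $2^{\pm J/2}$ factors. Conceptually the identity is just the statement that the orthogonal projection of $K_t$ onto the span of $\{\mathcal{U},h_{j,k}:j<J\}$ is the level-$J$ dyadic step function taking the values $\overline{K}_t^{J,l}$, and that the Wiener integral sends $\mathcal{U}$ and $h_{j,k}$ to $\eta_0$ and $\varepsilon_{j,k}$, and each indicator $\ind_{[\delta_{J,l},\delta_{J,l+1})}$ to $\Delta B_{J,l}$; the inductive computation above is simply the elementary incarnation of this projection identity.
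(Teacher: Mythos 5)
Your proof is correct and follows essentially the same route as the paper: induction on $J$, the base case via $\mathcal{U}=\ind_{[0,1)}$, the computation $\langle K_t,h_{J,k}\rangle = 2^{-\frac{J}{2}-1}\big(\overline{K}_t^{J+1,2k}-\overline{K}_t^{J+1,2k+1}\big)$, and the two two-scale relations for the means and the Brownian increments; your identity $\tfrac12(a+b)(c+d)+\tfrac12(a-b)(c-d)=ac+bd$ is exactly the algebraic content of the paper's display verifying the inductive step, merely arranged as ``level $J$ plus increment'' rather than as a difference of levels.
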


\begin{proof}
The proof of (\ref{eq:alignement1}) is skipped since it is very easy. For proving (\ref{eqn:XJt3}) one proceeds by induction on $J$. It is clearly satisfied when $J=0$. Let us assume that it holds for an arbitrary $J \in \Z_+$ and show that it remains true when $J$ is replaced by $J+1$. Thus, in view of the induction hypothesis  and of (\ref{eqn:XJt2}), it suffices to prove that
\begin{equation}\label{eqn:mvKt:proof1}
\sum_{k=0}^{2^J-1} \big\langle K_t, h_{J,k}\big\rangle \varepsilon_{J,k} =  \sum_{l=0}^{2^{J+1}-1} \overline{K}_t^{J+1,l} \Delta B_{J+1,l} - \sum_{l=0}^{2^J-1} \overline{K}_t^{J,l} \Delta B_{J,l}.
\end{equation}
First, let us note that
\begin{align*}
\big\langle K_t, h_{J,k}\big\rangle & = 2^{\frac{J}{2}} \left( \int_{\delta_{J+1,2k}}^{\delta_{J+1,2k+1}}K_t(s) ds - \int_{\delta_{J+1,2k+1}}^{\delta_{J+1,2k+2}}K_t(s) ds\right)  \nonumber \\
& = 2^{-\frac{J}{2}-1} \left( \overline{K}_t^{J+1,2k} - \overline{K}_t^{J+1,2k+1} \right)\,.
\end{align*}
Thus, one can derive from (\ref{eq:epsilon}) that
\begin{align}\label{eqn:mvKt:proof2}
\sum_{k=0}^{2^J-1} \big\langle K_t, h_{J,k}\big\rangle \varepsilon_{J,k} & = \sum_{k=0}^{2^J-1
} \left( \frac{\overline{K}_t^{J+1,2k} - \overline{K}_t^{J+1,2k+1}}{2} \right) \left( \Delta B_{J+1,2k} - \Delta B_{J+1,2k+1}\right).
\end{align}
On the other hand, using the equality 
$
\Delta B_{J,l} = \Delta B_{J+1, 2l} + \Delta B_{J+1,2l+1} 
$
and the equality in (\ref{eq:alignement1}) one obtains that
\begin{align}\label{eqn:mvKt:proof3}
& \sum_{l=0}^{2^{J+1}-1} \overline{K}_t^{J+1,l} \Delta B_{J+1,l}  - \sum_{l=0}^{2^J-1} \overline{K}_t^{J,l} \Delta B_{J,l} \nonumber \\
& = \sum_{k=0}^{2^{J}-1} \left( \overline{K}_t^{J+1,2k} \Delta B_{J+1,2k} +  \overline{K}_t^{J+1,2k+1} \Delta B_{J+1,2k+1} \right)  \nonumber \\ 
&\hspace{2cm}-\sum_{k=0}^{2^J-1}\left (\dfrac{\overline{K}_t^{J+1,2k}+\overline{K}_t^{J+1,2k+1}}{2} \right)\left( \Delta B_{J+1, 2k} + \Delta B_{J+1,2k+1} \right) \nonumber \\
& =  \sum_{k=0}^{2^{J}-1}\left( \left( \dfrac{\overline{K}_t^{J+1,2k}-\overline{K}_t^{J+1,2k+1}}{2} \right) \Delta B_{J+1,2k} - \left( \dfrac{\overline{K}_t^{J+1,2k}-\overline{K}_t^{J+1,2k+1}}{2} \right)  \Delta B_{J+1,2k+1} \right) \nonumber \\
& = \sum_{k=0}^{2^J-1
} \left( \frac{\overline{K}_t^{J+1,2k} - \overline{K}_t^{J+1,2k+1}}{2} \right) \left( \Delta B_{J+1,2k} - \Delta B_{J+1,2k+1}\right) \,.
\end{align}
Thus combining (\ref{eqn:mvKt:proof3}) and (\ref{eqn:mvKt:proof2}) one gets (\ref{eqn:mvKt:proof1}).
\end{proof}

In order to show that, for each fixed $t\in I$, $X^J (t)$ converges to $X(t)$ in $L^1 (\Omega)$ when $J$ goes to $+\infty$, let us introduce the stochastic process $\big\{\widetilde{X}^J(t): t\in I\big\}$ defined through It\^o integral in the following way: 

\begin{definition}\label{def:KJt}
For all fixed $J\in\Z_+$ and $t\in I$, let $\big\{\widetilde{K}^J_t(s) : s\in I\big\}$ be the elementary stochastic process defined, for every $s\in I$, as: 
\begin{equation}\label{eqn:KJt}
\widetilde{K}^J_t(s):=\sum_{l=0}^{2^J-1} K_t(\delta_{J,l}) \ind_{[\delta_{J,l},\delta_{J,l+1})}(s).
\end{equation}
One sets
\begin{equation}\label{eqn:XJt}
\widetilde{X}^J(t):=\int_0^1 \widetilde{K}_t^J(s) dB(s)=\sum_{l=0}^{2^J-1}K_t(\delta_{J,l}) \Delta B_{J,l}.
\end{equation}
\end{definition}

%This elementary stochastic process belongs to $\Ld(I\times\Omega,\B(I)\otimes\mathcal{F})$ and it is adapted to the filtration $(\mathcal{F}_s)_{s\in I}$. Hence, the It\^o integral of $\widetilde{K}_t(s)$ exists, and one has

\begin{rem}\label{rem:L}
For each fixed $t\in I$, the function $L_t$ is defined, for all $(u,v)\in [0,1]\times [\underline{a},\overline{a}]$, as:
\begin{equation}\label{eq:Ldef}
L_t(u,v):= (t-u)_+^{v-\frac{1}{2}}=
\left\lbrace
\begin{array}{ll}
0 & \mbox{ if }t\le u \, , \\
(t-u)^{v-\frac{1}{2}} & \mbox{ otherwise.}
\end{array}
\right.
\end{equation}
This function is continuous on $[0,1]\times [\underline{a},\overline{a}]$ and satisfies, for every $(u,v) \in [0,1]\times [\underline{a},\overline{a}]$,
\begin{equation}\label{eq:Lencadr}
0 \leq L_t(u,v) \leq 1 \, .
\end{equation}
Moreover, in view of (\ref{def_Kt}) and (\ref{eq:Ldef}) one has 
\begin{equation}
\label{eq:antF_KL}
K_t (s,\o)=L_t (s,A(s,\o))\,,\quad \mbox{for all $(t,s,\o)\in I^2\times\Omega$,}
\end{equation}
\end{rem}

Remark \ref{rem:L}, the continuity of the paths of the process $\{A(s) : s \in I\}$, 
the dominated convergence theorem, and the isometry property of It\^o integral easily imply that the following lemma holds. 

\begin{lemma}\label{lem:convXJt}
For any fixed $(t,s,\o)\in I^2\times \Omega$, one has
\begin{equation}
\lim_{J\rightarrow \ii} \widetilde{K}_t^J(s,\o)=K_t(s,\o).
\end{equation}
Hence, using the dominated convergence theorem and the isometry property of It\^o integral, one gets:
\begin{enumerate}
\item the sequence $(\widetilde{K}^J_t)_{J\in\Z_+}$ converges to $K_t$ in $\Ld(I\times \Omega)$ when $J$ goes to $\ii$;
\item the sequence $\left( \widetilde{X}^J(t) \right)_{J\in\Z_+}$ converges to $X(t)= \int_0^1 K_t(s) dB(s)$ in $\Ld(\Omega)$ when $J$ goes to $\ii$.
\end{enumerate}
\end{lemma}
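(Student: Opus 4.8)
The plan is to establish the pointwise convergence first, and then to deduce the two $\Ld$ statements as consequences of the dominated convergence theorem and of the It\^o isometry, respectively.

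First I would prove the pointwise limit. Fix $(t,s,\o)\in I^2\times\Omega$ and suppose $s\in[0,1)$ (the single point $s=1$ is immaterial, and there both sides vanish since $t\le 1$ forces $K_t(1,\o)=0=\widetilde{K}^J_t(1,\o)$). For each $J\in\Z_+$ there is a unique index $l_J\in\{0,\dots,2^J-1\}$ with $s\in[\delta_{J,l_J},\delta_{J,l_J+1})$, namely $l_J=\lfloor 2^J s\rfloor$, so that by the very definition (\ref{eqn:KJt}) one has $\widetilde{K}^J_t(s,\o)=K_t(\delta_{J,l_J},\o)$. Since $|\delta_{J,l_J}-s|\le 2^{-J}\to 0$, it then suffices to invoke the continuity of the map $s\mapsto K_t(s,\o)$, which follows from Remark~\ref{rem:L} because $K_t(s,\o)=L_t(s,A(s,\o))$ with $L_t$ continuous and the path $s\mapsto A(s,\o)$ continuous, to conclude that $K_t(\delta_{J,l_J},\o)\to K_t(s,\o)$.

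Next, for the convergence in $\Ld(I\times\Omega)$, I would use the uniform bound $0\le K_t(s,\o)\le 1$ coming from (\ref{eq:Lencadr}), which also gives $0\le\widetilde{K}^J_t(s,\o)\le 1$ since $\widetilde{K}^J_t$ only takes values of the form $K_t(\delta_{J,l},\o)$. Hence $|\widetilde{K}^J_t-K_t|^2\le 1$ everywhere, and as $I\times\Omega$ carries the finite measure $ds\,d\Prob$, the dominated convergence theorem applied to the pointwise limit above yields $\|\widetilde{K}^J_t-K_t\|_{\Ld(I\times\Omega)}\to 0$, which is assertion~(1). For assertion~(2), I would observe that $\widetilde{K}^J_t$ is a simple process whose coefficient $K_t(\delta_{J,l})$ on $[\delta_{J,l},\delta_{J,l+1})$ is $\mathcal{F}_{\delta_{J,l}}$-measurable, hence an admissible It\^o integrand, and that $\widetilde{X}^J(t)=\int_0^1\widetilde{K}^J_t(s)\,dB(s)$ as recorded in (\ref{eqn:XJt}). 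The It\^o isometry then transfers the previous convergence directly:
\[
\ESP\Big(\big|\widetilde{X}^J(t)-X(t)\big|^2\Big)=\ESP\Big(\int_0^1\big|\widetilde{K}^J_t(s)-K_t(s)\big|^2\,ds\Big)=\big\|\widetilde{K}^J_t-K_t\big\|_{\Ld(I\times\Omega)}^2\longrightarrow 0\qquad(J\to\ii).
\]

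This argument is essentially routine, which is why the statement is announced as an easy consequence. The only point deserving a little care is the pointwise convergence: the approximant $\widetilde{K}^J_t$ is the left-endpoint step-function sampling of $K_t$ on the dyadic grid, so one must check that the left endpoint $\delta_{J,l_J}$ of the dyadic cell containing $s$ indeed converges to $s$ and then exploit path continuity. Everything else follows mechanically from the uniform bound $0\le K_t\le 1$, which makes the domination valid on the finite measure space $I\times\Omega$, and from the isometry property already available for the admissible integrands $K_t$ and $\widetilde{K}^J_t$.
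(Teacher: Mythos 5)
Your proof is correct and follows exactly the route the paper intends: the paper gives no detailed argument, merely stating that Remark~\ref{rem:L}, the continuity of the paths of $\{A(s):s\in I\}$, the dominated convergence theorem and the It\^o isometry ``easily imply'' the lemma, and your write-up fills in precisely those steps (left-endpoint dyadic sampling plus path continuity for the pointwise limit, the bound $0\le K_t\le 1$ for domination on the finite measure space $I\times\Omega$, and the isometry for assertion~(2)). Your handling of the endpoint $s=1$ and the check that $K_t(\delta_{J,l})$ is $\mathcal{F}_{\delta_{J,l}}$-measurable are both sound, so nothing is missing.
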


\begin{lemma}\label{lem:conv:XJtXt}
Assume that the exponent $\rho$ (see (\ref{hyp:Ahold1})) satisfies  (\ref{cond2:rho}). Then, one has
\begin{equation}\label{eqn:conv:XJtXt}
\lim_{J\rightarrow \ii} \left\lbrace \sup_{t\in I} \ESP\left( \left| X^J(t)-\widetilde{X}^J(t)  \right| \right) \right\rbrace=0.
\end{equation}
\end{lemma}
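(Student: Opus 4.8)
The plan is to combine the two explicit formulas already available for the partial sums, namely $X^J(t)=\sum_{l=0}^{2^J-1}\overline{K}_t^{J,l}\,\Delta B_{J,l}$ from (\ref{eqn:XJt3}) and $\widetilde{X}^J(t)=\sum_{l=0}^{2^J-1}K_t(\delta_{J,l})\,\Delta B_{J,l}$ from (\ref{eqn:XJt}), so that their difference reads
\[
X^J(t)-\widetilde{X}^J(t)=\sum_{l=0}^{2^J-1}Z_{J,l}\,\Delta B_{J,l},\qquad Z_{J,l}:=\overline{K}_t^{J,l}-K_t(\delta_{J,l}).
\]
The coefficient $Z_{J,l}$ is $\mathcal{F}_{\delta_{J,l+1}}$-measurable but not $\mathcal{F}_{\delta_{J,l}}$-measurable, so $X^J(t)$ is not an It\^o integral and the terms $Z_{J,l}\,\Delta B_{J,l}$ are correlated; I therefore do not attempt to exploit any orthogonality. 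Instead I bound crudely, using the triangle inequality, Cauchy--Schwarz and $\ESP(\Delta B_{J,l}^2)=2^{-J}$,
\[
\ESP\big(|X^J(t)-\widetilde{X}^J(t)|\big)\le \sum_{l=0}^{2^J-1}\ESP\big(|Z_{J,l}|\,|\Delta B_{J,l}|\big)\le 2^{-J/2}\sum_{l=0}^{2^J-1}\big(\ESP(Z_{J,l}^2)\big)^{1/2},
\]
which reduces everything to estimating $\ESP(Z_{J,l}^2)$ and summing the square roots.

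To estimate $\ESP(Z_{J,l}^2)$ I would write $Z_{J,l}=2^J\int_{\delta_{J,l}}^{\delta_{J,l+1}}\big(K_t(s)-K_t(\delta_{J,l})\big)\,ds$, apply Jensen's inequality to pull the square inside the average, and invoke Lemma~\ref{lem_accroissK}. For the intervals lying to the left of $t$, the increment bound (\ref{eq:lem_accroissK}) together with (\ref{hyp:Ahold1}), after using $(a+b)^2\le 2a^2+2b^2$ and $\sqrt{x+y}\le\sqrt{x}+\sqrt{y}$, splits $\big(\ESP(Z_{J,l}^2)\big)^{1/2}$ into a \emph{regularity-of-$A$} contribution of order $2^{-J\rho}$ and a \emph{singular} contribution controlled by $2^{-J/2}\big(\int_{\delta_{J,l}}^{\delta_{J,l+1}}(t-s)^{2\unda-3}\,ds\big)^{1/2}$. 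The delicate point is that (\ref{eq:lem_accroissK}) is useless on the dyadic interval containing $t$ and on the one immediately to its left, since there $(t-s)^{2\unda-3}$ fails to be integrable; on these at most two intervals I would instead use the trivial bound $|Z_{J,l}|\le 2$ coming from $0\le K_t\le 1$ and $0\le \overline{K}_t^{J,l}\le 1$ (see (\ref{eq:alignement1})), whose total contribution is $O(2^{-J/2})$.

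It then remains to sum the two genuine contributions. The regularity-of-$A$ part is bounded by $2^{-J/2}\cdot 2^J\cdot 2^{-J\rho}=2^{J(1/2-\rho)}$, which tends to $0$ \emph{precisely because} of the hypothesis $\rho>1/2$ in (\ref{cond2:rho}); this is the only place where that assumption enters. For the singular part, writing $d_l:=t-\delta_{J,l+1}\ge 2^{-J}$ for the remaining intervals and using $(t-s)^{2\unda-3}\le d_l^{2\unda-3}$ on each of them gives $\int_{\delta_{J,l}}^{\delta_{J,l+1}}(t-s)^{2\unda-3}\,ds\le 2^{-J}d_l^{2\unda-3}$, so the total singular contribution is bounded by $2^{-3J/2}\sum_{p\ge 1}(p\,2^{-J})^{\unda-3/2}=2^{-J\unda}\sum_{p=1}^{\,\sim 2^{J}}p^{\unda-3/2}$. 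The main quantitative obstacle is that this last series diverges, because $\unda-3/2\in(-1,-1/2)$; however its partial sum up to $p\sim 2^{J}$ grows only like $2^{J(\unda-1/2)}$, which yields a bound of order $2^{-J/2}$. Since none of the three bounds depends on $t$, collecting them gives $\sup_{t\in I}\ESP\big(|X^J(t)-\widetilde{X}^J(t)|\big)\le C\big(2^{J(1/2-\rho)}+2^{-J/2}\big)\to 0$ as $J\to\ii$, which is exactly (\ref{eqn:conv:XJtXt}).
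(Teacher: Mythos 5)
Your proposal is correct and takes essentially the same route as the paper's proof: the same reduction via the triangle inequality, Cauchy--Schwarz and $\ESP\big(|\Delta B_{J,l}|^2\big)=2^{-J}$ to $2^{-J/2}\sum_l\big(\ESP(Z_{J,l}^2)\big)^{1/2}$, the same appeal to Lemma~\ref{lem_accroissK} and (\ref{hyp:Ahold1}), the same trivial bound $|Z_{J,l}|\le 2$ near $t$, and the same decisive use of $\rho>\frac{1}{2}$, yielding the same rate $2^{J(\frac{1}{2}-\rho)}+2^{-J/2}$ uniformly in $t$. The only (correct, slightly heavier) technical deviation is that your Jensen step squares the singular factor into the non-integrable exponent $2\unda-3\in(-2,-1)$, forcing you to discard a second interval and estimate the discrete sum $\sum_{p}p^{\unda-\frac{3}{2}}\sim 2^{J(\unda-\frac{1}{2})}$, whereas the paper keeps the un-squared term $\int_{\delta_{J,l}}^{\delta_{J,l+1}}(t-s)^{\unda-\frac{3}{2}}\,ds$ (exponent $\unda-\frac{3}{2}>-1$, integrable), telescopes the sum over $l$ into $\int_0^{\delta_{J,[2^Jt]}}(t-s)^{\unda-\frac{3}{2}}\,ds\le\frac{t^{\unda-\frac{1}{2}}}{\unda-\frac{1}{2}}$, and only needs to exclude the single interval $l=[2^Jt]$.
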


The proof of Lemma~\ref{lem:conv:XJtXt} is given in the Appendix~\ref{sec:Appen}. The following proposition is a straightforward consequence of Lemma \ref{lem:convXJt}, Lemma \ref{lem:conv:XJtXt} and (\ref{eqn:XJt2}). 

\begin{proposition}\label{prop:Haarrepres}
Assume that the exponent $\rho$ (see (\ref{hyp:Ahold1})) satisfies  (\ref{cond2:rho}). Then, for every fixed $t \in  I$, one has
\begin{equation}\label{eq:Haarrepres2}
X(t) = \big\langle K_t, \mathcal{U} \big\rangle \eta_0 + \sum_{j=0}^{+ \infty} \sum_{k=0}^{2^j-1} \big\langle K_t, h_{j,k}\big\rangle \varepsilon_{j,k} \, ,
\end{equation}
where the convergence holds in $L^1(\Omega)$. 
%Recall that the ${\cal N}(0,1)$ Gaussian random variables $\eta_0$ and $\varepsilon_{j,k}$ are given in (\ref{eq:eta}) and (\ref{eq:epsilon}) respectively.
\end{proposition}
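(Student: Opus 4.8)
The plan is to deduce the $L^1(\Omega)$ convergence by inserting the It\^o-integral approximations $\widetilde{X}^J(t)$ as an intermediate term between $X^J(t)$ and $X(t)$, since $L^2$ (hence $L^1$) convergence of the former to $X(t)$ is already supplied by Lemma~\ref{lem:convXJt}. First I would observe that, by the very definition (\ref{eqn:XJt2}), the random variable $X^J(t)$ is exactly the $J$-th partial sum of the series appearing on the right-hand side of (\ref{eq:Haarrepres2}). Consequently, proving that this series converges to $X(t)$ in $L^1(\Omega)$ is equivalent to establishing that
\[
\lim_{J\rightarrow \ii}\ESP\big(|X^J(t)-X(t)|\big)=0 .
\]

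Next I would apply the triangle inequality in $L^1(\Omega)$ with $\widetilde{X}^J(t)$ as the intermediate point:
\[
\ESP\big(|X^J(t)-X(t)|\big)\ \le\ \ESP\big(|X^J(t)-\widetilde{X}^J(t)|\big)\ +\ \ESP\big(|\widetilde{X}^J(t)-X(t)|\big).
\]
The first term on the right tends to $0$ as $J\to\ii$ directly by Lemma~\ref{lem:conv:XJtXt}: indeed it is bounded above by $\sup_{t\in I}\ESP(|X^J(t)-\widetilde{X}^J(t)|)$, which vanishes under the standing hypothesis (\ref{cond2:rho}) on $\rho$. For the second term, Lemma~\ref{lem:convXJt} gives that $\widetilde{X}^J(t)\to X(t)$ in $L^2(\Omega)$; since $\Omega$ is a probability space, Jensen's inequality yields $\ESP(|Y|)\le\big(\ESP(|Y|^2)\big)^{1/2}$ for any $Y$, so $L^2$ convergence forces $L^1$ convergence and the second term vanishes as well. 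Combining the two estimates gives the desired limit, which completes the argument.

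The genuine difficulty of this statement is not in the above $\varepsilon/2$ decomposition, which is entirely routine, but is packaged inside Lemma~\ref{lem:conv:XJtXt}, whose proof is deferred to the appendix. That lemma is precisely where the assumption $\rho>1/2$ is used: it must control, \emph{uniformly} in $t\in I$, the discrepancy between the Haar partial sum $X^J(t)=\sum_{l}\overline{K}_t^{J,l}\,\Delta B_{J,l}$, which involves the block \emph{averages} $\overline{K}_t^{J,l}$ of $K_t$ (by Proposition~\ref{prop:mvKt}), and the elementary-process integral $\widetilde{X}^J(t)=\sum_{l}K_t(\delta_{J,l})\,\Delta B_{J,l}$, which involves the left-endpoint \emph{values}. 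Their difference equals $\sum_{l}\big(\overline{K}_t^{J,l}-K_t(\delta_{J,l})\big)\Delta B_{J,l}$, and bounding its $L^1$ norm uniformly in $t$ relies on the H\"older-type increment estimate of Lemma~\ref{lem_accroissK} together with the moment bound (\ref{hyp:Ahold1}) on $A$. At the level of the present proposition, however, once Lemma~\ref{lem:conv:XJtXt} is granted nothing beyond the triangle inequality and the elementary implication $L^2\Rightarrow L^1$ is required, so I expect no further obstacle here.
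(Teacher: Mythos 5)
Your proof is correct and takes essentially the same route as the paper, which derives Proposition~\ref{prop:Haarrepres} as a straightforward consequence of Lemma~\ref{lem:convXJt}, Lemma~\ref{lem:conv:XJtXt} and the definition (\ref{eqn:XJt2}) --- i.e., exactly the triangle-inequality decomposition through $\widetilde{X}^J(t)$ that you spell out, together with the elementary implication that $L^2(\Omega)$ convergence entails $L^1(\Omega)$ convergence. Your closing remark also correctly identifies that the substantive content (where $\rho>1/2$ is used) is packaged in Lemma~\ref{lem:conv:XJtXt}, just as in the paper.
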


%The main result Theorem \ref{thm:Haarrepres} of this Section shows that the convergence of (\ref{eq:Haarrepres2}) also holds uniformly in $t \in I$ on an event of probability 1. Let us start by recalling the following lemma:

\begin{proposition}\label{prop:unif}
Assume that the exponent $\rho$ (see (\ref{hyp:Ahold1})) satisfies  (\ref{cond2:rho}). Then one has
\begin{equation}\label{eqn:lem:unif0}
\ESP \left( \sum_{j=0}^{+ \infty} \sup_{t \in I} \left( \sum_{k=0}^{2^j-1} \left| \big\langle K_t, h_{j,k}\big\rangle \right| |\varepsilon_{j,k}| \right)\right) < + \infty .
\end{equation}
\end{proposition}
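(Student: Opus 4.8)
The plan is to bound, for each fixed $j$, the quantity $\ESP\big(\sup_{t\in I}\sum_{k=0}^{2^j-1}|\langle K_t,h_{j,k}\rangle|\,|\varepsilon_{j,k}|\big)$ and then to sum over $j$; since every term is nonnegative, monotone convergence lets me interchange the expectation with both sums freely, so it suffices to prove that the displayed quantity is summable in $j$. For a fixed $t$ the inner product $\langle K_t,h_{j,k}\rangle$ vanishes as soon as the support $[\delta_{j,k},\delta_{j,k+1})$ of $h_{j,k}$ lies entirely to the right of $t$, so only the indices $k\le k_0$ contribute, where $k_0=k_0(t,j)$ denotes the largest integer with $\delta_{j,k}<t$. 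I would isolate the single ``singular'' index $k=k_0$, for which I only have the crude bound $|\langle K_t,h_{j,k_0}\rangle|\le 2^{-j/2}$ coming from (\ref{eq:rem:Kt:hjk2}), and treat all ``regular'' indices $k<k_0$ by the refined increment estimate of Lemma~\ref{lem_accroissK}, applied with $s'=s$ and $s''=s+2^{-j-1}$ inside the representation (\ref{eq:rem:Kt:hjk}) (note that for $k<k_0$ one has $s+2^{-j-1}<\delta_{j,k_0}\le t$ throughout the integral, so the lemma is applicable).

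This yields, for $k<k_0$, a bound $|\langle K_t,h_{j,k}\rangle|\le c_0\,(U_{j,k}(t)+T_{j,k})$, where I set the deterministic factor
$U_{j,k}(t):=2^{-j/2-1}\int_{\delta_{j,k}}^{\delta_{j,k}+2^{-j-1}}(t-s-2^{-j-1})^{\unda-3/2}\,ds$
and the random but $t$-independent factor
$T_{j,k}:=2^{j/2}\int_{\delta_{j,k}}^{\delta_{j,k}+2^{-j-1}}|A(s)-A(s+2^{-j-1})|\,ds$.
The whole point of the decomposition is that $U$ and $T$ are controlled by two genuinely different mechanisms, and that the only surviving $t$-dependence sits in the harmless deterministic factor $U$.

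For the deterministic part, the change of variable $u=s+2^{-j-1}$ turns the integrals defining $\sum_{k<k_0}U_{j,k}(t)$ into integrals over the disjoint second halves of the dyadic intervals, all contained in $[0,\delta_{j,k_0}]$; hence $\sum_{k<k_0}U_{j,k}(t)\le 2^{-j/2-1}\int_0^{\delta_{j,k_0}}(t-u)^{\unda-3/2}\,du\le 2^{-j/2-1}(\unda-\tfrac12)^{-1}$, uniformly in $t$, where integrability of $(t-u)^{\unda-3/2}$ holds precisely because $\unda>1/2$. Bounding $\sum_{k<k_0}U_{j,k}(t)|\varepsilon_{j,k}|\le\big(\max_k|\varepsilon_{j,k}|\big)\sum_{k<k_0}U_{j,k}(t)$ and the singular term by $2^{-j/2}|\varepsilon_{j,k_0}|\le 2^{-j/2}\max_k|\varepsilon_{j,k}|$, both contributions are dominated by $C\,2^{-j/2}\max_{0\le k\le 2^j-1}|\varepsilon_{j,k}|$. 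Since the $\varepsilon_{j,k}$ are $\mathcal{N}(0,1)$ variables, the classical Gaussian maximal inequality gives $\ESP\big(\max_{0\le k\le 2^j-1}|\varepsilon_{j,k}|\big)\le C\sqrt{j+1}$, and $\sum_j 2^{-j/2}\sqrt{j+1}<+\infty$.

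The part carrying the real content, and where hypothesis (\ref{cond2:rho}) enters, is the random term. Because $T_{j,k}\ge 0$, I first drop the constraint $k<k_0$ and bound $\sup_t\sum_{k<k_0}T_{j,k}|\varepsilon_{j,k}|\le\sum_{k=0}^{2^j-1}T_{j,k}|\varepsilon_{j,k}|$, which no longer depends on $t$. Cauchy--Schwarz then gives $\ESP\big(T_{j,k}|\varepsilon_{j,k}|\big)\le\sqrt{\ESP(T_{j,k}^2)}$, and a further Cauchy--Schwarz inside the integral defining $T_{j,k}$, combined with Fubini and the moment condition (\ref{hyp:Ahold1}), produces $\ESP(T_{j,k}^2)\le C\,2^{-j(1+2\rho)}$. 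Summing the $2^j$ terms yields $\sum_k\ESP\big(T_{j,k}|\varepsilon_{j,k}|\big)\le C\,2^{j(1/2-\rho)}$, summable in $j$ exactly because $\rho>1/2$. Collecting the three estimates and summing over $j$ gives (\ref{eqn:lem:unif0}). The main obstacle is the coupling between the supremum over $t$ and the randomness of the coefficients; it is resolved by the splitting above, which quarantines all $t$-dependence into the deterministic factor $U_{j,k}(t)$ while making the genuinely random factor $T_{j,k}$ independent of $t$, so that the supremum and the expectation can be disentangled.
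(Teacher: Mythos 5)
Your proof is correct, and its analytic skeleton coincides with the paper's: the same isolation of the single index straddling $t$ (bounded crudely by $2^{-j/2}$ via (\ref{eq:rem:Kt:hjk2})), the same application of Lemma~\ref{lem_accroissK} inside the representation (\ref{eq:rem:Kt:hjk}) for the remaining indices, the same uniform-in-$t$ bound $2^{-j/2-1}(\unda-\tfrac12)^{-1}$ on the summed deterministic integrals, and the same device of extending the $A$-increment terms to a $t$-free quantity before taking expectations. Where you genuinely diverge is in the probabilistic mechanism. The paper invokes Lemma~\ref{lem:classic} (imported from \cite{ayache2003rate}), i.e.\ the almost sure bound $|\varepsilon_{j,k}|\le C_\ast\sqrt{j+1}$ with a single random constant $C_\ast$ having all moments, valid simultaneously for all $j,k$; it then performs one global Cauchy--Schwarz between $C_\ast$ and $\int_0^{1-2^{-j-1}}|A(s)-A(s+2^{-j-1})|\,ds$, using (\ref{hyp:Ahold1}) to get the factor $2^{-\rho j}$ in (\ref{eqn:lem:unif2}). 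You avoid that lemma entirely: for the deterministic part you use the elementary in-expectation Gaussian maximal inequality $\ESP\bigl(\max_{0\le k\le 2^j-1}|\varepsilon_{j,k}|\bigr)\le C\sqrt{j+1}$, and for the random part you apply Cauchy--Schwarz term by term, $\ESP\bigl(T_{j,k}|\varepsilon_{j,k}|\bigr)\le\bigl(\ESP (T_{j,k}^2)\bigr)^{1/2}$ (correctly requiring no independence between $A$ and the $\varepsilon_{j,k}$), then a second Cauchy--Schwarz inside the integral together with Fubini and (\ref{hyp:Ahold1}) to get $\ESP(T_{j,k}^2)\le C\,2^{-j(1+2\rho)}$ and hence $\sum_k \ESP\bigl(T_{j,k}|\varepsilon_{j,k}|\bigr)\le C\,2^{j(\frac12-\rho)}$. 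What each approach buys: the paper's route produces a pathwise bound (\ref{eqn:lem:unif}) multiplied by the a.s.\ envelope $C_\ast\sqrt{j+1}$, which is then reused verbatim in the proof of Theorem~\ref{thm:holder}, so the investment in Lemma~\ref{lem:classic} pays off later; your route is self-contained (no external lemma, no a.s.\ uniform control of the Gaussians) and even yields a marginally sharper rate on the random part, $2^{j(\frac12-\rho)}$ without the $\sqrt{j+1}$ factor, though both estimates need exactly $\rho>\tfrac12$, i.e.\ hypothesis (\ref{cond2:rho}), at the same point. One minor bookkeeping remark: your $k_0(t,j)$ (largest $k$ with $\delta_{j,k}<t$) differs from the paper's $[2^jt]$ when $t$ is itself dyadic, but in that case the extra inner product vanishes, so both conventions are sound.
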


One of the main ingredients of the proof of Proposition~\ref{prop:unif} is the following lemma which concerns the ${\cal N}(0,1)$ Gaussian random variables $\varepsilon_{j,k}$ and whose proof can be found in \cite{ayache2003rate}.

\begin{lemma}\label{lem:classic}
There are an event $\Omega_\ast \subseteq \Omega$ of probability 1 and a non-negative random variable $C_\ast$ with finite moment of any order, such that the inequality
\begin{equation}
|\varepsilon_{j,k}(\omega)| \leq C_\ast(\omega) \sqrt{j+1}.
\end{equation}
holds, for all $\omega \in \Omega_\ast$, for every $j \in \Z_+$ and for each $k \in \{0,\dots, 2^j -1\}$.  
\end{lemma}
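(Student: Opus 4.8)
The plan is to construct $C_\ast$ explicitly as a normalised supremum and to control its tail by combining a union bound with the classical Gaussian tail estimate. Concretely, I would set
\[
C_\ast := \sup_{j\in\Z_+}\ \max_{0\le k\le 2^j-1} \frac{|\varepsilon_{j,k}|}{\sqrt{j+1}},
\]
which is a well-defined $[0,+\infty]$-valued random variable, being a supremum of countably many measurable functions. With this choice the announced inequality $|\varepsilon_{j,k}(\omega)|\le C_\ast(\omega)\sqrt{j+1}$ holds for every $j\in\Z_+$ and every $k\in\{0,\dots,2^j-1\}$ by the very definition of the supremum, as soon as $C_\ast(\omega)<+\infty$. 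Thus, setting $\Omega_\ast:=\{C_\ast<+\infty\}$, the entire statement reduces to showing that $C_\ast$ is almost surely finite and has finite moments of every order.

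First I would recall that each $\varepsilon_{j,k}$, being by (\ref{eq:epsilon}) the Wiener integral of the unit-norm Haar function $h_{j,k}$, is an $\mathcal{N}(0,1)$ random variable, so that the standard tail bound $\Prob\big(|\varepsilon_{j,k}|>u\big)\le 2e^{-u^2/2}$ holds for all $u\ge 0$; note that only the marginal law of each $\varepsilon_{j,k}$ is needed here, not any independence. Applying a union bound over the at most $2^j$ indices $k$ at each scale $j$ and then summing over all scales, I would estimate, for $\lambda>0$,
\[
\Prob\big(C_\ast>\lambda\big)\le \sum_{j=0}^{+\infty}\sum_{k=0}^{2^j-1}\Prob\big(|\varepsilon_{j,k}|>\lambda\sqrt{j+1}\big)\le 2\sum_{j=0}^{+\infty} 2^j e^{-\lambda^2(j+1)/2}=2e^{-\lambda^2/2}\sum_{j=0}^{+\infty}\big(2e^{-\lambda^2/2}\big)^j.
\]
The geometric series converges precisely when $2e^{-\lambda^2/2}<1$, i.e. when $\lambda>\sqrt{2\log 2}$; for instance for every $\lambda\ge 2$ one has $1-2e^{-\lambda^2/2}\ge 1/2$, which yields the Gaussian-type tail bound $\Prob\big(C_\ast>\lambda\big)\le 4e^{-\lambda^2/2}$.

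From this estimate the two remaining facts follow immediately. Letting $\lambda\to+\infty$ gives $\Prob(C_\ast=+\infty)=0$, so $C_\ast<+\infty$ almost surely and $\Omega_\ast=\{C_\ast<+\infty\}$ has probability $1$. For the moments I would write, for any $p>0$,
\[
\ESP\big(C_\ast^{\,p}\big)=\int_0^{+\infty} p\,\lambda^{p-1}\,\Prob\big(C_\ast>\lambda\big)\,d\lambda,
\]
split the integral at $\lambda=2$, bound the integrand by $p\lambda^{p-1}$ on $[0,2]$ and by $4p\,\lambda^{p-1}e^{-\lambda^2/2}$ on $[2,+\infty)$, and invoke the convergence of $\int_0^{+\infty}\lambda^{p-1}e^{-\lambda^2/2}\,d\lambda$ to conclude that $\ESP(C_\ast^{\,p})<+\infty$ for every $p>0$.

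The argument is otherwise routine, and the one point that must be handled with care—which I regard as the heart of the matter—is the competition between the exponentially growing cardinality $2^j$ of the indices at scale $j$ and the Gaussian decay $e^{-\lambda^2(j+1)/2}$. It is exactly the normalisation by $\sqrt{j+1}$ in the definition of $C_\ast$ that renders the per-scale contribution $2^je^{-\lambda^2(j+1)/2}$ summable for $\lambda$ large enough, so that the union bound collapses into a convergent geometric series; a normalisation by $(j+1)^{\beta}$ with any $\beta<1/2$ would destroy the summability. Once this threshold is verified, no further delicate estimate is required.
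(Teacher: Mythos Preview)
Your argument is correct. The paper itself does not prove this lemma but merely cites \cite{ayache2003rate}; your self-contained proof via the explicit supremum, the Chernoff Gaussian tail bound, and the union bound collapsing to a geometric series is exactly the standard route and is presumably what the cited reference contains.
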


\begin{proof}[Proof of Proposition~\ref{prop:unif}]
For all $t \in I$ and all $j \in \Z_+$, one has
\begin{equation}
\label{eq:zeroparent}
\sum_{k=0}^{2^j-1} \left| \big\langle K_t, h_{j,k}\big\rangle \right| =\left| \big\langle K_t, h_{j,[2^jt]}\big\rangle \right| +\sum_{k=0}^{[2^jt]-1} \left| \big\langle K_t, h_{j,k}\big\rangle \right| \, ,
\end{equation}
with the conventions that $\big\langle K_t, h_{j,2^j}\big\rangle=0$ and $\sum_{k=0}^{-1}\cdots=0$, which means that the second term in the right-hand side of (\ref{eq:zeroparent}) vanishes when $[2^jt]=0$.
In the other case $[2^jt]\ge 1$, using (\ref{eq:rem:Kt:hjk}) and Lemma~\ref{lem_accroissK}, one gets
\begin{align*}
\sum_{k=0}^{[2^jt]-1} \left| \big\langle K_t, h_{j,k}\big\rangle \right| 
& \le c_0 2 ^{\frac{j}{2}} \sum_{k=0}^{[2^jt]-1} \int_{2^{-j}k}^{2^{-j}(k + \frac{1}{2})} \left( (t-2^{-j-1}-s)^{\underline{a}- \frac{3}{2}} 2^{-j-1} + \big| A(s) - A(s+2^{-j-1})\big| \right)ds \nonumber \\
& \le  c_0 2^{-\frac{j}{2}-1}  \int_{0}^{t-2^{-j-1}}  (t-2^{-j-1}-s)^{\underline{a}- \frac{3}{2}}  ds +  c_0 2^{\frac{j}{2}}  \int_0^{t-2^{-j-1}} \big| A(s) - A(s+2^{-j-1})\big| ds \nonumber \\
& \le c_0 2^{-\frac{j}{2}-1}   \frac{(t-2^{-j-1})^{\underline{a}- \frac{1}{2}}}{\underline{a}- \frac{1}{2}}   +  c_0 2 ^{\frac{j}{2}}  \int_0^{t-2^{-j-1}} \big| A(s) - A(s+2^{-j-1})\big| ds \nonumber \\
& \le c_3 2^{-\frac{j}{2}}   +  c_0 2 ^{\frac{j}{2}}  \int_0^{t-2^{-j-1}} \big| A(s) - A(s+2^{-j-1})\big| ds
\end{align*}
where $c_3 = \frac{c_0}{2\underline{a}-1}$. Moreover, from (\ref{eq:rem:Kt:hjk2}), we know that, for any $t\in I$,
\begin{equation*}
\left| \big\langle K_t, h_{j,[2^jt]}\big\rangle \right| \leq 2^{-\frac{j}{2}}.
\end{equation*}
Thus, setting $c_4 := c_3+1$, one has, for each $t\in I$,
\begin{equation}\label{eqn:lem:unif}
\sum_{k=0}^{2^j-1} \left| \big\langle K_t, h_{j,k}\big\rangle \right| \leq c_4 2^{-\frac{j}{2}}   +  c_0 2 ^{\frac{j}{2}}  \int_0^{1-2^{-j-1}} \big| A(s) - A(s+2^{-j-1})\big| ds \, .
\end{equation}
Then using Lemma~\ref{lem:classic} and (\ref{eqn:lem:unif}), one obtains, for every $\omega \in \Omega_\ast$ and for all $j \in \Z_+$, that
\begin{equation*}
\sup_{t \in I} \left( \sum_{k=0}^{2^j-1} \left| \big\langle K_t, h_{j,k}\big\rangle \right| |\varepsilon_{j,k}| \right)
\le C_{\ast} \sqrt{1+j} \left( c_4 2^{-\frac{j}{2}}   +  c_0 2^{\frac{j}{2}}  \int_0^{1-2^{-j-1}} \big| A(s) - A(s+2^{-j-1})\big| ds \right),
\end{equation*}
where  the non-negative random variable $C_\ast$ has finite moments of any order. Thus, in order to derive (\ref{eqn:lem:unif0}), it is enough to prove that 
\begin{equation}\label{eqn:lem:unif1}
\sum_{j=0}^{+ \infty} 2^{\frac{j}{2}} \sqrt{1+j}\,\ESP \left( C_{\ast} \int_0^{1-2^{-j-1}} \big| A(s) - A(s+2^{-j-1})\big| ds \right) < + \infty \, .
\end{equation}
For every $j \in \Z_+$, it follows from the Cauchy-Schwarz inequality and (\ref{hyp:Ahold1}) that
\begin{align}\label{eqn:lem:unif2}
&\ESP \left( C_{\ast} \int_0^{1-2^{-j-1}} \big| A(s) - A(s+2^{-j-1})\big| ds \right) \nonumber\\ 
& \le  \left( \ESP \left( C_{\ast}^2 \right) \right)^{\frac{1}{2}}  \left( \ESP \left( \left( \int_0^{1-2^{-j-1}} \big| A(s) - A(s+2^{-j-1})\big| ds  \right)^2 \right) \right)^{\frac{1}{2}}  \nonumber \\
& \le \left( \ESP \left( C_{\ast}^2 \right) \right)^{\frac{1}{2}}  \left( \ESP  \left( \int_0^{1-2^{-j-1}} \big| A(s) - A(s+2^{-j-1})\big|^2 ds  \right) \right)^{\frac{1}{2}}  \nonumber \\
& = \left( \ESP \left( C_{\ast}^2 \right) \right)^{\frac{1}{2}}    \left( \int_0^{1-2^{-j-1}}\ESP \left(\big| A(s) - A(s+2^{-j-1})\big|^2 \right) ds  \right)^{\frac{1}{2}}  \nonumber \\
& \leq c_5 2^{-\rho j}
\end{align}
where $c_5>0$ is a deterministic finite constant not depending on $j$ and $t$. Finally, combining  (\ref{eqn:lem:unif2}) and the assumption (\ref{cond2:rho}), one gets (\ref{eqn:lem:unif1}). 
\end{proof}

\begin{proof}[Proof of Theorem \ref{thm:Haarrepres}] The theorem is a straightforward consequence of
Propositions \ref{prop:Haarrepres} and \ref{prop:unif}. 
\end{proof}

\section{Study of H\"older regularity}
\label{sec:Hol}

%The aim of this section is to study the uniform regularity of typical paths of the process $\{X(t) : t \in I\}$. This regularity will be given by the values taken by the process $\{A(s) : s \in I\}$. Let us first recall the definition of this exponent. 

\begin{definition}
Let $f$ be a deterministic real-valued continuous function defined on $I$. The uniform H\"older exponent of $f$ on an arbitrary non-degenerate compact interval $[\nu_1, \nu_2] \subseteq I$ is denoted by $\beta_f\big([\nu_1, \nu_2] \big)$ and defined as:
\begin{equation}
\beta_f\big([\nu_1, \nu_2] \big) := \sup \left\{ \gamma \in [0,1] : \sup_{t',t'' \in [\nu_1, \nu_2]} \frac{\big| f(t'') - f(t')\big|}{|t''-t'|^\gamma} < + \infty \right\} \, . 
\end{equation}
%Note that if $f$ is nonderivable on $[\nu_1, \nu_2]$, $\beta_f\big([\nu_1, \nu_2]\big)$ is  on $I_0$. 
\end{definition}

%In order to state our main result, we need an additional assumption on the stochastic process $\{A(s) : s \in I \}$. In the following, we will 
Throughout this section, we assume that there are a deterministic constant $\gamma  \in ( \frac{1}{2},1)$ and $\Omega_0$ an event of probability 1, such that, for each $\omega\in\Omega_0$, the path $A(\cdot,\omega)$ of the process $\{A(s) : s \in I\}$ satisfies, on a $I$, a uniform H\"older condition of order $\gamma$. That is there exists a finite constant $C_1(\omega)$ such that, for all $s',s''\in I$, the following inequality holds :
\begin{equation}\label{eqn:regholdA}
\left| A(s'',\omega) - A(s',\omega) \right| \leq C_1(\omega) \left| s''-s' \right|^\gamma \, .
\end{equation}

\begin{thm}\label{thm:holder}
Assume that the condition (\ref{eqn:regholdA}) is satisfied. Then, there exists an event $\Omega_1 \subseteq \Omega$ of probability 1 such that, for every $\omega \in \Omega_1$ and for all non-degenerate compact interval $[\nu_1,\nu_2] \subseteq I$, one has
\begin{equation}
\label{eq:A:thm:holder}
\beta_{X(\cdot, \omega)}\big( [\nu_1,\nu_2] \big) \geq A_{\nu_1, \nu_2}(\omega)\, ,
\end{equation}
where
\begin{equation}\label{thm:holder:def}
A_{\nu_1, \nu_2}(\omega) := \min \big\{A(s,\omega) : s \in [\nu_1, \nu_2] \big\} \, .
\end{equation}
\end{thm}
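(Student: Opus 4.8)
The plan is to read off the lower bound for the H\"older exponent from the Haar series representation (\ref{eq:Haarrepres}) by controlling the increments of $X$ scale by scale. I would work on the event $\Omega_1:=\Omega_{**}\cap\Omega_\ast\cap\Omega_0$, which has probability $1$ and on which simultaneously the series (\ref{eq:Haarrepres}) converges uniformly in $t$, the bound $|\varepsilon_{j,k}|\le C_\ast\sqrt{j+1}$ of Lemma~\ref{lem:classic} holds, and $A(\cdot,\omega)$ is uniformly $\gamma$-H\"older. Subtracting two copies of (\ref{eq:Haarrepres}) and inserting $|\varepsilon_{j,k}|\le C_\ast\sqrt{j+1}$, for $t',t''\in[\nu_1,\nu_2]$ one gets
\[
|X(t'')-X(t')|\le \big|\langle K_{t''}-K_{t'},\mathcal U\rangle\big|\,|\eta_0| + C_\ast\sum_{j=0}^{+\infty}\sqrt{j+1}\,\Sigma_j(t',t''),
\]
where $\Sigma_j(t',t''):=\sum_{k=0}^{2^j-1}\big|\langle K_{t''}-K_{t'},h_{j,k}\rangle\big|$. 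The scaling-function term is $O(|t''-t'|)$ (indeed $\langle K_{t''}-K_{t'},\mathcal U\rangle=\int_0^1(K_{t''}-K_{t'})$), so everything reduces to proving $\sum_{j}\sqrt{j+1}\,\Sigma_j(t',t'')\lesssim |t''-t'|^{A_{\nu_1,\nu_2}(\omega)-\epsilon}$ for every $\epsilon>0$; this yields $\beta_{X(\cdot,\omega)}\big([\nu_1,\nu_2]\big)\ge A_{\nu_1,\nu_2}(\omega)-\epsilon$ and, letting $\epsilon\downarrow0$, the theorem.

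The heart of the matter is therefore the estimate of $\Sigma_j$. Writing $h:=t''-t'$ and $J_0:=\lfloor\log_2(1/h)\rfloor$, I would split the sum over $k$ according to the position of the dyadic interval $[\delta_{j,k},\delta_{j,k+1})$ relative to $[t',t'']$: intervals lying entirely to the right of $t''$ contribute nothing (there $K_{t'}=K_{t''}=0$); intervals lying to the left of $t'$ are the \emph{regular} terms; and the $O(1+2^jh)$ intervals meeting $[t',t'']$ are the \emph{singular} terms. For a regular term at distance $\delta$ to the left of $t'$, I would bound $|\langle K_{t''}-K_{t'},h_{j,k}\rangle|$ in two ways and keep the smaller: a magnitude bound, from (\ref{eq:rem:Kt:hjk}) and Lemma~\ref{lem_accroissK} (using (\ref{eqn:regholdA}) to absorb the $|A(s)-A(s+2^{-j-1})|$ contribution), of order $2^{-3j/2}\delta^{\unda-3/2}$; and a $t$-increment bound of order $h\,2^{-3j/2}\delta^{\unda-5/2}$, which I would establish by differentiating $K_t$ in $t$ and estimating the resulting $s$-increment. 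The crossover occurs at $\delta\approx h$, and summing over $\delta\in 2^{-j}\N$ gives a regular contribution of order $2^{-j/2}h^{\unda-1/2}$ at the fine scales $j>J_0$ and $h\,2^{j(1-\unda)}$ at the coarse scales $j\le J_0$; with the weight $\sqrt{j+1}$ both sum to $h^{\unda}$ up to a factor $\sqrt{\log(1/h)}$.

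The decisive step is the singular terms, where the \emph{local} exponent enters. On the dyadic interval containing a point $t$ (i.e. $k=[2^jt]$) one has $0\le K_t(s)=(t-s)_+^{A(s)-\frac12}\le (t-\delta_{j,k})^{A(s)-\frac12}\le (2^{-j})^{A(t)-\frac12-C_1(\omega)2^{-j\gamma}}$, whence $|\langle K_t,h_{j,k}\rangle|$ is of order $2^{-jA(t)}$ up to a slowly varying factor; the same estimate governs the $O(2^jh)$ ``gap'' intervals between $t'$ and $t''$. Since $t',t''\in[\nu_1,\nu_2]$ and, by uniform continuity of $A(\cdot,\omega)$, $A(s)\ge A_{\nu_1,\nu_2}(\omega)-\epsilon$ for all $s$ within a fixed distance of $[\nu_1,\nu_2]$ once $h$ is small, these terms contribute $\lesssim 2^{-j(A_{\nu_1,\nu_2}(\omega)-\epsilon)}$ per scale, again summing to $h^{A_{\nu_1,\nu_2}(\omega)-\epsilon}$ up to logarithmic factors. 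Collecting regular and singular contributions and absorbing the $\sqrt{\log(1/h)}$ into a further arbitrarily small power of $h$ finishes the increment estimate, uniformly in $t',t''$ and in the interval, so that the single event $\Omega_1$ serves for all $[\nu_1,\nu_2]$ at once.

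I expect the main obstacle to be exactly the poor localization of the Haar system: since $h_{j,k}$ has only one vanishing moment and no smoothness, the coefficients $\langle K_t,h_{j,k}\rangle$ decay only polynomially off the diagonal, so on the regular terms neither the magnitude bound nor the $t$-increment bound alone suffices and they must be balanced term by term with crossover at $\delta\approx h$, while the singular terms can only be tamed through the sharp near-diagonal bound of order $2^{-jA(t)}$ — this is precisely what converts the global integrability exponent $\unda$ into the sharp local exponent $A_{\nu_1,\nu_2}(\omega)$. A secondary difficulty is bookkeeping: keeping all constants uniform so one event works for every interval, and carefully absorbing the logarithmic factors produced by $\sqrt{j+1}$ so as to reach the exponent $A_{\nu_1,\nu_2}(\omega)$ itself and not merely something strictly smaller.
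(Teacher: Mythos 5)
Your overall architecture is indeed the paper's (Haar series from Theorem~\ref{thm:Haarrepres}, the bound $|\varepsilon_{j,k}|\le C_\ast\sqrt{j+1}$ of Lemma~\ref{lem:classic}, and a scale/position splitting with crossover at $2^{-j_0}\approx t''-t'$), and your fine-scale/coarse-scale balancing of the off-diagonal coefficients is correct as far as it goes. But there is a genuine gap: your regular-term estimate is carried out with the \emph{global} exponent $\unda$ throughout --- magnitude $2^{-3j/2}\delta^{\unda-3/2}$, increment $h\,2^{-3j/2}\delta^{\unda-5/2}$ --- and, as you yourself compute, these sum to $h^{\unda}$ up to $\sqrt{\log(1/h)}$. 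This caps your final increment bound at $h^{\unda-\epsilon}$ and proves only $\beta_{X(\cdot,\omega)}([\nu_1,\nu_2])\ge\unda$, which is strictly weaker than (\ref{eq:A:thm:holder}) whenever $A_{\nu_1,\nu_2}(\omega)>\unda$ (the generic and interesting case: $\unda$ is only an a priori global floor for $A$, while on $[\nu_1,\nu_2]$ the process $A$ may stay near $\ova$). Your closing claim that the singular terms are ``precisely what converts the global integrability exponent $\unda$ into the sharp local exponent'' is therefore mistaken: the dominant regular contribution at the crossover $\delta\approx h$ comes from points $s$ at distance $O(h)$ to the left of $t'$, hence inside a small neighbourhood of $[\nu_1,\nu_2]$ where $A(s)\ge A_{\nu_1,\nu_2}-\epsilon$, so the local exponent must be injected into the off-diagonal terms as well. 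This is exactly what the paper does with its auxiliary point $\nu_0<\nu_1$: the left region is split into $[0,\nu_0]$ (the term $\mu^1_j$ in (\ref{eqn:mu1:thm:holder}), where the kernel is uniformly non-singular and contributes $O(2^{-j/2}h)$ with a constant proportional to $(\nu_1-\nu_0)^{\unda-3/2}$) and $[\nu_0,t']$ (the term $\mu^2_j$ in (\ref{eqn:mu2:thm:holder}), where changes of variables pull out the factors $2^{-(j+1)A_{\nu_0,\nu_2}}$ and $(t''-t')^{A_{\nu_0,\nu_2}-\frac{1}{2}}$), and at the very end $\nu_0\uparrow\nu_1$ and $\epsilon\downarrow 0$. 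Without some version of this device your regular terms cannot see $A_{\nu_1,\nu_2}$.

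Two secondary points, both repairable but unproved as written. First, your singular-term bound of order $2^{-j(A_{\nu_1,\nu_2}-\epsilon)}$ per scale sums to $h^{A_{\nu_1,\nu_2}-\epsilon}$ only over the fine scales $j\ge j_0$; over the coarse scales $j<j_0$ it sums to $O(1)$, which is useless, so near the diagonal you again need the $t$-increment gain at coarse scales --- this is the paper's treatment of $\mu^3_j$ for $j\le j_0-1$, culminating in (\ref{eqn2373:thm:holder}). Second, your $t$-increment bound $h\,2^{-3j/2}\delta^{\unda-5/2}$ omits the contribution of the varying exponent: differentiating $K_t$ in $t$ and then shifting $s$ by $2^{-j-1}$ also produces a term of size roughly $h\,2^{-j(\frac{1}{2}+\gamma)}\delta^{\unda-\frac{3}{2}}\log(1/\delta)$ coming from $|A(s)-A(s+2^{-j-1})|$; it is harmless since $\gamma>\frac{1}{2}$, and the paper isolates it cleanly as $\lambda^1_j$ with the bound $C_4 2^{-j(\gamma-\frac{1}{2})}(t''-t')$ in (\ref{eqn19:thm:holder}), but in your min-of-two-bounds scheme it must be tracked explicitly, since as a pure magnitude bound (without the factor $h$) it would dominate $2^{-3j/2}\delta^{\unda-3/2}$ at $\delta$ of order one and destroy the estimate.
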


\begin{rem}
We conjecture that the inequality in (\ref{eq:A:thm:holder}) can be replaced by an equality. 
\end{rem}

\begin{proof}
Let us consider an arbitrary non-degenerate compact interval $[\nu_1,\nu_2] \subseteq I$, and let us fix $t',t'' \in [\nu_1,\nu_2]$ such that $t'<t''$. One can derive from Theorem \ref{thm:Haarrepres} and the triangular inequality that, on some event $\Omega_{**}$ of probability~1 included in $\Omega_\ast$ (see Lemma~\ref{lem:classic}) and not depending on $t',t'',\nu_1,\nu_2$, the following inequality holds: 
\begin{equation}
\label{A:thm3.1:eq1}
\left| X(t'')-X(t')\right| \leq \left|\big\langle K_{t''}, \mathcal{U} \big\rangle - \big\langle K_{t'}, \mathcal{U} \big\rangle \right| |\eta_0| + \sum_{j=0}^{+ \infty} \sum_{k=0}^{2^j-1} \left| \big\langle K_{t''}, h_{j,k}\big\rangle -\big\langle K_{t'}, h_{j,k}\big\rangle \right| | \varepsilon_{j,k}| \, ,
\end{equation}
Observe that it can easily be seen that, for all fixed $\omega\in\Omega$ (the probability space), $t\mapsto \big\langle K_{t}(\omega), \mathcal{U} \big\rangle$ is a Lipschitz function on $I$. So, we only have to focus on the second term in the right-hand side of (\ref{A:thm3.1:eq1}).
%Let us note that the first term is regular, so that it is enough to study the H\"older regularity of the series. Moreover
Using Lemma \ref{lem:classic}, one has
\begin{equation}\label{eqn0:thm:holder}
\sum_{j=0}^{+ \infty} \sum_{k=0}^{2^j-1} \left| \big\langle K_{t''}, h_{j,k}\big\rangle -\big\langle K_{t'}, h_{j,k}\big\rangle \right| | \varepsilon_{j,k}| \leq C_\ast \sum_{j=0}^{+ \infty} \left( \sqrt{j+1} \sum_{k=0}^{2^j-1} \left| \big\langle K_{t''}, h_{j,k}\big\rangle -\big\langle K_{t'}, h_{j,k}\big\rangle \right| \right).
\end{equation}
Moreover, in view of (\ref{eq:rem:Kt:hjk}) and (\ref{eq:antF_KL}), for each $j \in \Z_+$, one has
\begin{align}
 \sum_{k=0}^{2^j-1}\left| \big\langle K_{t''}, h_{j,k}\big\rangle -  \big\langle K_{t'}, h_{j,k}\big\rangle  \right|
 \leq  2^{\frac{j}{2}} & \sum_{k=0}^{2^j-1} \int_{2^{-j}k}^{2^{-j}(k + \frac{1}{2})}  \Big|L_{t''}\big(s,A(s) \big) - L_{t''}\big(s+2^{-j-1}, A(s+2^{-j-1})\big)   \nonumber \\ 
 & \qquad \qquad \qquad - L_{t'}\big(s,A(s)\big) + L_{t'}\big(s+2^{-j-1},A(s+2^{-j-1})\big) \Big| ds \nonumber \\
 \leq  2^{\frac{j}{2}} & \int_0^{1-2^{-j-1}}  \Big| L_{t''}\big(s,A(s) \big) - L_{t''}\big(s+2^{-j-1}, A(s+2^{-j-1})\big)   \nonumber \\ 
 & \qquad \qquad  - L_{t'}\big(s,A(s)\big) + L_{t'}\big(s+2^{-j-1},A(s+2^{-j-1})\big)  \Big| ds  \, .
\end{align}
Therefore, using the triangular inequality, one gets that
\begin{equation}\label{eqn:thm:holder}
\sum_{k=0}^{2^j-1} \left| \big\langle K_{t''}, h_{j,k}\big\rangle -  \big\langle K_{t'}, h_{j,k}\big\rangle \right| \leq \lambda^1_{j}(t',t'') + \lambda^2_{j}(t',t'')\,,
\end{equation}
where 
\begin{align}\label{eqn1:thm:holder}
\lambda^1_{j}(t',t'') := 2^{\frac{j}{2}}  \int_0^{1-2^{-j-1}}   & \Big| L_{t''}\big(s,A(s) \big) - L_{t''}\big(s, A(s+2^{-j-1})\big)  \nonumber \\
& - L_{t'}\big(s,A(s)\big) + L_{t'}\big(s,A(s+2^{-j-1})\big) \Big| ds
\end{align}
and
\begin{align}\label{eqn2:thm:holder}
 \lambda^2_{j}(t',t'') 
 :=  2^{\frac{j}{2}}  \int_0^{1-2^{-j-1}} & \Big| L_{t''}\big(s, A(s+2^{-j-1})\big) -L_{t''}\big(s+2^{-j-1}, A(s+2^{-j-1})\big) \nonumber \\
& - L_{t'}\big(s,A(s+2^{-j-1})\big)  + L_{t'}\big(s+2^{-j-1},A(s+2^{-j-1})\big) \Big|  ds \, .
\end{align}
Let us set
\begin{equation}\label{eqn3:thm:holder}
t'_j := \min \{t', 1-2^{-j-1}\} \quad \text{and} \quad t''_j := \min \{t'', 1-2^{-j-1}\} \, ,
\end{equation}
and let $j_0$ denote the unique nonnegative integer such that
\begin{equation}\label{eqn224:thm:holder}
2^{-j_0-1} < t''-t' \leq 2^{-j_0} \, .
\end{equation}
From now on and till the end of the proof, we work on the event  $\Omega_1 := \Omega_{**} \cap \Omega_0$ of probability 1.
\bigskip

\textbf{$\bullet$ Step 1 : Upper bound for $\lambda^1_{j}(t',t'') \, $. } \\
From (\ref{eq:Ldef}),  (\ref{eqn1:thm:holder}) and (\ref{eqn3:thm:holder}), we know that
\begin{align}\label{eqn100:thm:holder}
 \lambda^1_{j}(t',t'')  = & 2^{\frac{j}{2}} \int_0^{t'_j}  \Big| \big(t''-s\big)^{A(s) -\frac{1}{2}} - \big(t''-s\big)^{A(s+2^{-j-1})-\frac{1}{2}}   - \big(t'-s\big)^{A(s)-\frac{1}{2}} + \big(t'-s\big)^{A(s+2^{-j-1}) - \frac{1}{2}} \Big| ds \nonumber \\
&   \qquad  + 2^{\frac{j}{2}} \int_{t'_j}^{t''_j}\Big| \big(t''-s\big)^{A(s) - \frac{1}{2} } - \big(t''-s\big)^{A(s+2^{-j-1})-\frac{1}{2}}   \Big| ds \, .
\end{align}
%where the second integral is equal to $0$ if $t'_j = 1-2^{-j-1}$. 
Let us first conveniently bound from above the first integral in (\ref{eqn100:thm:holder}). Observe that this integral vanishes when $t'_j=0$, so there is no restriction to assume that  $t'_j>0$. Let us then fix $s \in (0,t'_j)$. By applying the Mean Value Theorem to the function $x \mapsto (t''-s)^{x-\frac{1}{2}} - (t'-s)^{x-\frac{1}{2}}$, one gets
\begin{align}\label{eqn10:thm:holder}
& \Big| \big(t''-s\big)^{A(s) -\frac{1}{2}} - \big(t''-s\big)^{A(s+2^{-j-1})-\frac{1}{2}}   - \big(t'-s\big)^{A(s)-\frac{1}{2}} + \big(t'-s\big)^{A(s+2^{-j-1}) - \frac{1}{2}} \Big| \nonumber \\
  = &  \left| (t''-s)^{e_1 - \frac{1}{2}} \log (t''-s) - (t'-s)^{e_1 - \frac{1}{2}} \log (t'-s) \right| \big|A(s) - A(s+2^{-j-1})\big|
\end{align}
where $e_1$ depends on $t',t'',s,j$ and is between $A(s)$ and $A(s+2^{-j-1})$. This implies that
\begin{equation}\label{eqn11:thm:holder}
e_1 \in [\underline{a}, \overline{a} ] \subseteq \big( \frac{1}{2},1 \big).
\end{equation}
Let us apply once again the Mean Value Theorem to the function $x \mapsto x^{e_1 - \frac{1}{2}} \log (x)$; it follows that
\begin{equation}\label{eqn12:thm:holder}
\left| (t''-s)^{e_1 - \frac{1}{2}} \log (t''-s) - (t'-s)^{e_1 - \frac{1}{2}} \log (t'-s) \right| \leq  e_2^{e_1- \frac{3}{2}} \left(\big|e_1-\frac{1}{2}\big| |\log(e_2) | +1 \right) \big| t''-t'\big|\,,
\end{equation}
where 
\begin{equation}\label{eqn13:thm:holder}
e_2 \in (t'-s,t''-s) \subseteq (0,1).
\end{equation} 
Then, (\ref{eqn11:thm:holder}) and (\ref{eqn13:thm:holder}) imply that
\begin{equation}\label{eqn14:thm:holder}
 e_2^{e_1- \frac{3}{2}} \left(\big|e_1-\frac{1}{2}\big|  |\log(e_2) |+1 \right) \leq  e_2^{\underline{a}- \frac{3}{2}} \left( \log(e_2^{-1}) +1 \right) \leq (t'-s)^{\underline{a}- \frac{3}{2}} \left( \log\big((t'-s)^{-1}\big) +1 \right) \, ,
\end{equation}
where the last inequality comes from the fact that the function $x \mapsto x^{\underline{a}-\frac{3}{2}} \log(x^{-1})$ is decreasing on $(0,1)$. By combining (\ref{eqn10:thm:holder}), (\ref{eqn12:thm:holder}) and (\ref{eqn13:thm:holder}), it follows that
\begin{align}\label{eqn15:thm:holder}
& 2^{\frac{j}{2}} \int_0^{t'_j}  \Big| \big(t''-s\big)^{A(s) -\frac{1}{2}} - \big(t''-s\big)^{A(s+2^{-j-1})-\frac{1}{2}}   - \big(t'-s\big)^{A(s)-\frac{1}{2}} + \big(t'-s\big)^{A(s+2^{-j-1}) - \frac{1}{2}} \Big| ds \nonumber \\
\leq & 2^{\frac{j}{2}} \big| t''-t'\big| \int_0^{t'_j} (t'-s)^{\underline{a}- \frac{3}{2}} \left( \log\big((t'-s)^{-1}\big) +1 \right)  \big|A(s) - A(s+2^{-j-1})\big| ds  \nonumber \\
 \leq & C_1 2^{-j(\gamma - \frac{1}{2}) } \big| t''-t'\big| \int_0^{t'_j} (t'-s)^{\underline{a}- \frac{3}{2}} \left( \log\big((t'-s)^{-1}\big) +1 \right)  ds  \nonumber \\
% \leq  & C_1 2^{-j(\gamma - \frac{1}{2}) } \big| t''-t'\big| \int_0^{t'} (t'-s)^{\underline{a}- \frac{3}{2}} \left( \log\big((t'-s)^{-1}\big) +1 \right)  ds  \nonumber \\
  \leq  & C_1 2^{-j(\gamma - \frac{1}{2}) } \big| t''-t'\big| \int_0^{1} u^{\underline{a}- \frac{3}{2}} \left( \log\big(u^{-1}\big) +1 \right)  du   \nonumber \\
 = & C_2 2^{-j(\gamma - \frac{1}{2}) } \big| t''-t'\big|\,,
\end{align}
where we have used the assumption (\ref{eqn:regholdA}) on the paths of $A$, and where $C_2 = C_1 \left( (\unda-1/2)^{-1}+(\unda-1/2)^{-2} \right)$. Let us now conveniently bound from above the second integral in (\ref{eqn100:thm:holder}). There is no restriction to assume that $t'_j =t'<1-2^{-j-1}$ since this integral vanishes when $t'_j=1-2^{-j-1}$. Therefore, one has that
%Let us now assume that $t'_j =t'$ and let us study the integral 
\begin{equation}\label{eqn16:thm:holder}
2^{\frac{j}{2}} \int_{t'_j}^{t''_j }\Big| \big(t''-s\big)^{A(s) - \frac{1}{2} } - \big(t''-s\big)^{A(s+2^{-j-1})-\frac{1}{2}}   \Big| ds = 2^{\frac{j}{2}} \int_{t'}^{t''_j}\Big| \big(t''-s\big)^{A(s) - \frac{1}{2} } - \big(t''-s\big)^{A(s+2^{-j-1})-\frac{1}{2}}   \Big| ds\,.
\end{equation}
Let us fix $s \in (t',t_j'')$ and apply the Mean Value Theorem to the function $x \mapsto (t''-s)^{x - \frac{1}{2}}$. One gets
\begin{align}\label{eqn17:thm:holder}
\Big| \big(t''-s\big)^{A(s) - \frac{1}{2} } - \big(t''-s\big)^{A(s+2^{-j-1})-\frac{1}{2}}   \Big|  = &  (t'' -s)^{e_3-\frac{1}{2}} \big| \log (t''-s) \big| \big|A(s) - A(s+2^{-j-1}) \big| \nonumber \\
\leq & (t'' -s)^{\underline{a}-\frac{1}{2}} \big| \log (t''-s) \big| \big|A(s) - A(s+2^{-j-1}) \big|\,,
\end{align}
since $e_3$ is between $A(s)$ and $A(s+2^{-j-1})$. Next, using (\ref{eqn17:thm:holder}) and the assumption (\ref{eqn:regholdA}), one obtains
\begin{align}\label{eqn18:thm:holder}
2^{\frac{j}{2}} \int_{t'}^{t_j''}\Big| \big(t''-s\big)^{A(s) - \frac{1}{2} } - \big(t''-s\big)^{A(s+2^{-j-1})-\frac{1}{2}}   \Big| ds \leq  & 2^{\frac{j}{2}} \int_{t'}^{t_j''} (t'' -s)^{\underline{a}-\frac{1}{2}} \big| \log (t''-s) \big| \big|A(s) - A(s+2^{-j-1}) \big| ds \nonumber \\
\leq & C_1 2^{-j(\gamma - \frac{1}{2}) }\int_{t'}^{t''} (t'' -s)^{\underline{a}-\frac{1}{2}} \big| \log (t''-s) \big| ds \nonumber \\
\leq & C_3 2^{-j(\gamma - \frac{1}{2}) } (t''-t' )\,,
\end{align}
where $C_3 = C_1 \sup_{x \in (0,1]}x^{\underline{a}-\frac{1}{2}} \big|\log(x)\big| < + \infty$. Next, combining (\ref{eqn100:thm:holder}), (\ref{eqn15:thm:holder}) and (\ref{eqn18:thm:holder}), it follows that 
\begin{equation}\label{eqn19:thm:holder}
 \lambda^1_{j}(t',t'') \leq C_4 2^{-j(\gamma - \frac{1}{2}) } (t''-t' )\,,
\end{equation}
where $C_4 = C_2 + C_3$. 

\bigskip

\noindent\textbf{$\bullet$ Step 2 : Upper bound for  $\lambda^2_{j}(t',t'') \, $. } \\
First, observe that one can derive from (\ref{eq:Ldef}), (\ref{eqn2:thm:holder}) and (\ref{eqn3:thm:holder}), one has
\begin{align}\label{eqn21:thm:holder}
 \lambda^2_{j}(t',t'') 
 =   2^{\frac{j}{2}} \int_{0}^{t''_j} & \Big| L_{t''}\big(s, A(s+2^{-j-1})\big) -L_{t''}\big(s+2^{-j-1}, A(s+2^{-j-1})\big) \nonumber \\
& - L_{t'}\big(s,A(s+2^{-j-1})\big)  + L_{t'}\big(s+2^{-j-1},A(s+2^{-j-1})\big) \Big|  ds \, .
\end{align}
%Let us fix $\nu_0 \in (0,\nu_1)$ and let us consider
Let us define $\nu_0$ in the following way: if $\nu_1=0$ then $\nu_0:=0$, otherwise $\nu_0$ is an arbitrary fixed strictly positive real number belonging to the open interval $(0,\nu_1)$. Then, one sets
\begin{align}\label{eqn:mu1:thm:holder}
\mu^1_j(t',t'',\nu_0) :=2^{\frac{j}{2}} \int_{0}^{\max\{0,\nu_0 - 2^{-j-1}\}} & \Big| L_{t''}\big(s, A(s+2^{-j-1})\big) -L_{t''}\big(s+2^{-j-1}, A(s+2^{-j-1})\big) \nonumber \\
 & - L_{t'}\big(s,A(s+2^{-j-1})\big)  + L_{t'}\big(s+2^{-j-1},A(s+2^{-j-1})\big) \Big|  ds\, ,
\end{align}
%where the integral is equal to $0$ if $\nu_0 - 2^{-j-1} \le 0$, 
\begin{align}\label{eqn:mu2:thm:holder}
\mu^2_j(t',t'',\nu_0) :=2^{\frac{j}{2}} \int_{\max\{0,\nu_0 - 2^{-j-1}\} }^{\max\{0,t'- 2^{-j-1}\}} & \Big| L_{t''}\big(s, A(s+2^{-j-1})\big) -L_{t''}\big(s+2^{-j-1}, A(s+2^{-j-1})\big) \nonumber \\
 & - L_{t'}\big(s,A(s+2^{-j-1})\big)  + L_{t'}\big(s+2^{-j-1},A(s+2^{-j-1})\big) \Big|  ds\,,
\end{align}
%where the integral is equal to $0$ if $t' - 2^{-j-1} \le 0$, and finally
and
\begin{align}\label{eqn:mu3:thm:holder}
\mu^3_j(t',t'') := 2^{\frac{j}{2}} \int_{\max\{0,t'-2^{-j-1}\}}^{t''_j} & \Big| L_{t''}\big(s, A(s+2^{-j-1})\big) -L_{t''}\big(s+2^{-j-1}, A(s+2^{-j-1})\big) \nonumber \\
 & - L_{t'}\big(s,A(s+2^{-j-1})\big)  + L_{t'}\big(s+2^{-j-1},A(s+2^{-j-1})\big) \Big|  ds\, .
\end{align}
Notice that one has
\begin{equation}\label{eqn:mu:thm:holder}
 \lambda^2_{j}(t',t'')  = \mu^1_j(t',t'',\nu_0) + \mu^2_j(t',t'',\nu_0) + \mu^3_j(t',t'') \, . 
\end{equation}

\medskip

\textit{1. Upper bound for $\mu^1_j(t',t'',\nu_0)\, $.} \\
Observe that $\mu^1_j(t',t'',\nu_0)=0$ when $\nu_0 - 2^{-j-1}\le 0$, so there is no restriction to assume that $j$ is such that $\nu_0 - 2^{-j-1}>0$. Then, using (\ref{eq:Ldef}), (\ref{eqn:mu1:thm:holder}) and the inequalities
\begin{equation}\label{eqn212:thm:holder}
\nu_0 < \nu_1 \leq t' \leq t'' \leq \nu_2  \, ,
\end{equation} 
one gets that
\begin{align}\label{eqn211:thm:holder}
 \mu^1_j(t',t'',\nu_0)
= 2^{\frac{j}{2}} \int_{0}^{\nu_0 - 2^{-j-1}} & \Big| (t''-s)^{A(s+2^{-j-1}) - \frac{1}{2}} -(t''-s-2^{-j-1})^{A(s+2^{-j-1})- \frac{1}{2}} \nonumber \\
& - (t'-s)^{A(s+2^{-j-1})- \frac{1}{2}}  + (t'-s-2^{-j-1})^{A(s+2^{-j-1})- \frac{1}{2}} \Big|  ds \, .
\end{align}
Let us fix $s \in (0,\nu_0 - 2^{-j-1})$ and apply the Mean Value Theorem to the function ${x \mapsto x^{A(s+ 2^{-j-1}) - \frac{1}{2}} - (x-2^{-j-1})^{A(s+ 2^{-j-1}) - \frac{1}{2}}}$. One gets that 
\begin{align}\label{eqn213:thm:holder}
& \Big| (t''-s)^{A(s+2^{-j-1}) - \frac{1}{2}} -(t''-s-2^{-j-1})^{A(s+2^{-j-1})- \frac{1}{2}}  - (t'-s)^{A(s+2^{-j-1})- \frac{1}{2}}  + (t'-s-2^{-j-1})^{A(s+2^{-j-1})- \frac{1}{2}} \Big| \nonumber \\
= &  \left| \left(A(s+2^{-j-1})- \frac{1}{2}\right)  (t''-t')\right| \left| e_4^{A(s+2^{-j-1}) - \frac{3}{2}} - (e_4 - 2^{-j-1})^{A(s+2^{-j-1}) - \frac{3}{2}}\right| \nonumber \\
 \leq & (t''-t') \left| e_4^{A(s+2^{-j-1}) - \frac{3}{2}} - (e_4 - 2^{-j-1})^{A(s+2^{-j-1}) - \frac{3}{2}}\right|\,,
\end{align}
where
\begin{equation}\label{eqn214:thm:holder}
e_4 \in (t'-s,t''-s) \, .
\end{equation}
Then, the Mean Value Theorem applied to the function $x \mapsto x^{A(s+2^{-j-1}) - \frac{3}{2}}$ implies that there exists 
\begin{equation}\label{eqn215:thm:holder}
e_5 \in (e_4-2^{-j-1},e_4)
\end{equation}
such that
\begin{align}\label{eqn216:thm:holder}
\left| e_4^{A(s+2^{-j-1}) - \frac{3}{2}} - (e_4 - 2^{-j-1})^{A(s+2^{-j-1}) - \frac{1}{2}}\right| = & 2^{-j-1} \big| A(s+2^{-j-1})- \frac{3}{2} \big| e_5^{ A(s+2^{-j-1})- \frac{5}{2}} \nonumber \\
\leq & 2^{-j-1} (\nu_1-s-2^{-j-1})^{ \underline{a}- \frac{5}{2}}\,,
\end{align}
where we have used (\ref{eqn:bornesA}), (\ref{eqn212:thm:holder}), (\ref{eqn214:thm:holder}) and (\ref{eqn215:thm:holder}). Putting together (\ref{eqn211:thm:holder}), (\ref{eqn213:thm:holder}) and (\ref{eqn216:thm:holder}), it follows that 
\begin{equation}\label{eqn217:thm:holder}
\mu^1_j(t',t'',\nu_0) \leq 2^{-\frac{j}{2}-1} (t''-t') \int_0^{\nu_0 - 2^{-j-1}}    (\nu_1-s-2^{-j-1})^{ \underline{a}- \frac{5}{2}}  ds \leq 2^{-\frac{j}{2}} (t''-t') \frac{(\nu_1-\nu_0)^{\underline{a}- \frac{3}{2}}}{3- 2\underline{a}} \, .
\end{equation}

\medskip

\textit{2. Upper bound for $\mu^2_j(t',t'',\nu_0)\, $.} \\
Observe that $\mu^2_j(t',t'',\nu_0)=0$ when $t'- 2^{-j-1}\le 0$, so there is no restriction to assume that $j$ is such that $t' - 2^{-j-1}>0$. Then, using (\ref{eq:Ldef}) and (\ref{eqn:mu2:thm:holder}), we have 
\begin{align}\label{eqn221:thm:holder}
\mu^2_j(t',t'',\nu_0)
= 2^{\frac{j}{2}} \int_{\max\{0,\nu_0 - 2^{-j-1}\}}^{t'-2^{-j-1}} & \Big| (t''-s)^{A(s+2^{-j-1}) - \frac{1}{2}} -(t''-s-2^{-j-1})^{A(s+2^{-j-1})- \frac{1}{2}} \nonumber \\
& - (t'-s)^{A(s+2^{-j-1})- \frac{1}{2}}  + (t'-s-2^{-j-1})^{A(s+2^{-j-1})- \frac{1}{2}} \Big|  ds \, .
\end{align}
Assume first that $j \in \{0,\dots, j_0-1\}$. Combining (\ref{eqn221:thm:holder}) and (\ref{thm:holder:def}) in which $\nu_1$ is replaced by $\nu_0$, we get that
\begin{align}\label{eqn225:thm:holder}
& \mu^2_j(t',t'',\nu_0) \nonumber \\
& = 2^{-\frac{j}{2}-1} \int_{\max\{0,2^{j+1}\nu_0 - 1\}}^{2^{j+1}t'-1}  \Big| (t''-2^{-j-1}u)^{A(2^{-j-1}u+2^{-j-1}) - \frac{1}{2}} -(t''-2^{-j-1}u-2^{-j-1})^{A(2^{-j-1}u+2^{-j-1})- \frac{1}{2}} \nonumber \\
& \hspace*{3cm} - (t'-2^{-j-1}u)^{A(2^{-j-1}u+2^{-j-1})- \frac{1}{2}}  + (t'-2^{-j-1}u-2^{-j-1})^{A(2^{-j-1}u+2^{-j-1})- \frac{1}{2}} \Big|  du \nonumber \\
& \leq  2^{-(j+1)A_{\nu_0,\nu_2}} \int_{\max\{0,2^{j+1}\nu_0 - 1\}}^{2^{j+1}t'-1}  \Big| (2^{j+1}t''-u)^{A(2^{-j-1}(u+1)) - \frac{1}{2}} -(2^{j+1}t''-(u+1))^{A(2^{-j-1}(u+1))- \frac{1}{2}} \nonumber \\
& \hspace*{3cm} - (2^{j+1}t'-u)^{A(2^{-j-1}(u+1))- \frac{1}{2}}  + (2^{j+1}t'-(u+1))^{A(2^{-j-1}(u+1))- \frac{1}{2}} \Big|  du \nonumber \\
& =  2^{-(j+1)A_{\nu_0,\nu_2}} \int_{0}^{\min\{2^{j+1}t'-1,2^{j+1}(t'-\nu_0)\}}  \Big| (2^{j+1}(t''-t')+v+1)^{A(t'-2^{-j-1}v) - \frac{1}{2}}\nonumber \\
& \hspace*{3cm} -(2^{j+1}(t''-t') +v)^{A(t'-2^{-j-1}v)- \frac{1}{2}} - (v+1)^{A(t'-2^{-j-1}v)- \frac{1}{2}}  + v^{A(t'-2^{-j-1}v)- \frac{1}{2}} \Big|  dv \, .
\end{align}
Next, applying on the interval $[0,2^{j+1}(t''-t')]$ the Mean Value Theorem to the function ${x \mapsto (x+v+1)^{A(t'-2^{-j-1}v) - \frac{1}{2}} - (x+v)^{A(t'-2^{-j-1}v) - \frac{1}{2}}}$, and using (\ref{eqn:bornesA}), it follows that (\ref{eqn225:thm:holder}) can be bounded from above by
\begin{equation*}\label{eqn226:thm:holder}
2^{(j+1)(1-A_{\nu_0,\nu_2})}(t''-t')  \int_{0}^{+ \infty} \Big (v^{A(t'-2^{-j-1}v)- \frac{3}{2}} - (v+1)^{A(t'-2^{-j-1}v)- \frac{3}{2}} \Big) dv\, .
\end{equation*}
Observe that, thanks to (\ref{eqn:bornesA}), this last integral can be bounded from above by a deterministic constant $c>0$ not depending on $t',t'',j$. Thus, in view of the definition (\ref{eqn224:thm:holder}) of $j_0$ and the fact that ${j\leq j_0-1}$, one gets that 
\begin{equation}\label{eqn227:thm:holder}
\mu^2_j(t',t'',\nu_0) \leq c 2^{(j+1)(1-A_{\nu_0,\nu_2})}(t''-t') \leq c(t''-t')^{A_{\nu_0,\nu_2}}  \,, \quad\mbox{for all $j \in \{0,\dots, j_0-1\}$.}
\end{equation}
Assume now that $j \geq j_0$. Let us fix $s \in (\max\{0,\nu_0 - 2^{-j-1}\},t' - 2^{-j-1})$. Applying the Mean Value Theorem to the function ${x \mapsto (t''-s-x)^{A(s+2^{-j-1}) - \frac{1}{2}} - (t'-s-x)^{A(s+2^{-j-1})- \frac{1}{2}}}$, we obtain, for some ${e_6 \in (0,2^{-j-1})}$, that
\begin{align}\label{eqn222:thm:holder}
& \left| (t''-s)^{A(s+2^{-j-1}) - \frac{1}{2}} -(t''-s-2^{-j-1})^{A(s+2^{-j-1})- \frac{1}{2}} - (t'-s)^{A(s+2^{-j-1})- \frac{1}{2}}  + (t'-s-2^{-j-1})^{A(s+2^{-j-1})- \frac{1}{2}}  \right| \nonumber \\
= & 2^{-j-1} \left|A(s+2^{-j-1})- \frac{1}{2}\right|\left| (t''-s-e_6)^{A(s+2^{-j-1}) - \frac{3}{2}} - (t'-s -e_6)^{A(s+2^{-j-1})- \frac{3}{2}}   \right| \nonumber \\
\leq & 2^{-j-1} \left( (t'-s-2^{-j-1})^{A(s+2^{-j-1}) - \frac{3}{2}} - (t''-s -2^{-j-1})^{A(s+2^{-j-1})- \frac{3}{2}}   \right)\,,
\end{align}
where we have used  (\ref{eqn:bornesA}). Consequently, (\ref{eqn221:thm:holder}) can be bounded from above by
\begin{align}\label{eqn223:thm:holder}
& 2^{-\frac{j}{2}-1} \int_{\max\{0,\nu_0 - 2^{-j-1}\}}^{t' - 2^{-j-1}}\left( (t'-s-2^{-j-1})^{A(s+2^{-j-1}) - \frac{3}{2}} - (t''-s -2^{-j-1})^{A(s+2^{-j-1})- \frac{3}{2}}   \right) ds \nonumber \\
\leq & 2^{-\frac{j}{2}-1} \int_{\nu_0}^{t' }\left( (t'-v)^{A(v) - \frac{3}{2}} - (t''-v)^{A(v)- \frac{3}{2}}   \right) dv \nonumber \\
 = & 2^{-\frac{j}{2}-1} (t''-t')^{-\frac{1}{2}} \int_{1}^{\frac{t''-\nu_0}{t''-t'}} (t''-t')^{A(t''-(t''-t')u)}\left( (u-1)^{A(t''-(t''-t')u)-\frac{3}{2}} -  u^{A(t''-(t''-t')u)-\frac{3}{2}}\right)du \nonumber \\
\le & 2^{-\frac{j}{2}-1} (t''-t')^{A_{\nu_0,\nu_2}-\frac{1}{2}}  \int_{1}^{+ \infty} \left( (u-1)^{A(t''-(t''-t')u)-\frac{3}{2}} -  u^{A(t''-(t''-t')u)-\frac{3}{2}}\right)du\,,
\end{align}
where we have used the changes of variables $v=s+2^{-j-1}$ and $u = \frac{t''-v}{t''-t'}$. Moreover, thanks to (\ref{eqn:bornesA}), the last integral can be bounded from above by a deterministic constant $c'>0$ not depending on $t',t'',j$. Then  (\ref{eqn221:thm:holder}), (\ref{eqn222:thm:holder}), and (\ref{eqn223:thm:holder}) imply
\begin{equation}\label{eqn228:thm:holder}
\mu^2_j(t',t'',\nu_0) \leq c' 2^{-\frac{j}{2}-1} (t''-t')^{A_{\nu_0,\nu_2}-\frac{1}{2}} \,, \quad\mbox{for all $j \geq j_0$.}
\end{equation}

\medskip

\textit{3. Upper bound for $\mu^3_j(t',t'')\, $.} \\
%Let us now study
%\begin{align}\label{eqn230:thm:holder}
% 2^{\frac{j}{2}} \int_{t'-2^{-j-1}}^{t''_j} & \Big| L_{t''}\big(s, A(s+2^{-j-1})\big) -L_{t''}\big(s+2^{-j-1}, A(s+2^{-j-1})\big) \nonumber \\
% & - L_{t'}\big(s,A(s+2^{-j-1})\big)  + L_{t'}\big(s+2^{-j-1},A(s+2^{-j-1})\big) \Big|  ds %\nonumber \\
%\end{align}
Assume first that $j \in \{0,\dots, j_0-1\}$. Let us then decompose the integral (\ref{eqn:mu3:thm:holder}) defining $\mu^3_j(t',t'') $ into two parts: the integral on the interval $[\max\{0,t'-2^{-j-1}\},\max\{0,t''-2^{-j-1}\}]$ denoted by $\mu^{3,1}_j(t',t'')$, and the integral on the interval $[\max\{0,t''-2^{-j-1}\}, t''_j]$ denoted by $\mu^{3,2}_j(t'')$. Let us now provide an appropriate upper bound for $\mu^{3,1}_j(t',t'')$.
%\begin{align}\label{eqn231:thm:holder}
% 2^{\frac{j}{2}} \int_{t'-2^{-j-1}}^{t''-2^{-j-1}} & \Big| L_{t''}\big(s, A(s+2^{-j-1})\big) -L_{t''}\big(s+2^{-j-1}, A(s+2^{-j-1})\big) \nonumber \\
% & - L_{t'}\big(s,A(s+2^{-j-1})\big)  + L_{t'}\big(s+2^{-j-1},A(s+2^{-j-1})\big) \Big|  ds \nonumber \\
%\end{align}
One can assume that ${t''-2^{-j-1}\ge 0}$ since $\mu^{3,1}_j(t',t'')=0$ in the other case. Observe that one can derive from the latter inequality and from (\ref{eqn224:thm:holder}) that ${t'  \geq t''-2^{-j-1}}$. Thus, one has
\begin{align}\label{eqn232:thm:holder}
&\mu^{3,1}_j(t',t'') = 2^{\frac{j}{2}} \int_{\max\{0,t'-2^{-j-1}\}}^{t''-2^{-j-1}}  \Big| (t''-s)^{A(s+2^{-j-1})- \frac{1}{2}} -(t''-s-2^{-j-1})^{{A(s+2^{-j-1})- \frac{1}{2}}}  -(t'-s)^{A(s+2^{-j-1})- \frac{1}{2}} \Big|  ds \nonumber \\
 & \le 2^{\frac{j}{2}} \int_{t'-2^{-j-1}}^{t''-2^{-j-1}}  \Big| (t''-s)^{A(s+2^{-j-1})- \frac{1}{2}} -(t'-s)^{{A(s+2^{-j-1})- \frac{1}{2}}} \Big|ds +   2^{\frac{j}{2}} \int_{t'-2^{-j-1}}^{t''-2^{-j-1}}  (t''-s-2^{-j-1})^{A(s+2^{-j-1})- \frac{1}{2}}   ds  \, .
 \end{align}
 Using (\ref{thm:holder:def}), (\ref{eqn224:thm:holder}) and the assumption $j \leq j_0-1$, one gets that
 \begin{align}\label{eqn2321:thm:holder}
   2^{\frac{j}{2}} \int_{t'-2^{-j-1}}^{t''-2^{-j-1}}  (t''-s-2^{-j-1})^{A(s+2^{-j-1})- \frac{1}{2}}   ds
   \le &  2^{\frac{j}{2}} \int_{t'-2^{-j-1}}^{t''-2^{-j-1}}  (t''-s-2^{-j-1})^{A_{\nu_1,\nu_2}- \frac{1}{2}}   ds \nonumber \\
   = &   \frac{2^{\frac{j}{2}}}{A_{\nu_1,\nu_2}+ \frac{1}{2}} (t''-t')^{A_{\nu_1,\nu_2}+ \frac{1}{2}}  \nonumber \\
   \le & C_5 (t''-t')^{A_{\nu_1,\nu_2}}\,,
 \end{align}
where $C_5= 2^{-\frac{1}{2}} (A_{\nu_1,\nu_2}+ \frac{1}{2})^{-1}$. Using again (\ref{thm:holder:def}) and the change of variable $v=2^{j+1}(t''-s)-1$, one obtains that
\begin{align}\label{eqn233:thm:holder}
 &  2^{\frac{j}{2}} \int_{\max\{0,t'-2^{-j-1}\}}^{t''-2^{-j-1}}  \Big| (t''-s)^{A(s+2^{-j-1})- \frac{1}{2}} -(t'-s)^{{A(s+2^{-j-1})- \frac{1}{2}}} \Big|ds \nonumber \\
\le & 2^{-\frac{j}{2} -1} \int_0^{2^{j+1}(t''-t')} \Big| (2^{-j-1}(v+1))^{A(t''-2^{-j-1}v)- \frac{1}{2}} -(t'-t''+2^{-j-1}(v+1))^{{A(t''-2^{-j-1}v)- \frac{1}{2}}} \Big|dv \nonumber \\
\le &  2^{-(j+1)A_{\nu_1,\nu_2}}\int_0^{2^{j+1}(t''-t')} \Big| (v+1)^{A(t''-2^{-j-1}v)- \frac{1}{2}} -(2^{j+1}(t'-t'')+v+1)^{{A(t''-2^{-j-1}v)- \frac{1}{2}}} \Big|dv \nonumber \\
\le &  2^{-(j+1)A_{\nu_1,\nu_2}}\int_0^{2^{j+1}(t''-t')} \Big| (v+1)^{A(t''-2^{-j-1}v)- \frac{1}{2}} -(2^{j+1}(t'-t'')+v+1)^{{A(t''-2^{-j-1}v)- \frac{1}{2}}} \Big|dv \nonumber \\
\le &  2^{(j+1)(1-A_{\nu_1,\nu_2})}(t''-t') \int_0^{1}  v^{A(t''-2^{-j-1}v)- \frac{3}{2}} dv 
 \end{align}
where we have used in the last equality the Mean Value Theorem applied to the function ${x \mapsto (x+v+1)^{A(t''-2^{-j-1}v)- \frac{1}{2}}}$, (\ref{eqn224:thm:holder}) and the assumption $j \leq j_0-1$. Observe that, thanks to (\ref{eqn:bornesA}), the last integral can be bounded from above by a deterministic constant $c''>0$ not depending on $t',t'',j$. Thus, one can derive from (\ref{eqn233:thm:holder}), (\ref{eqn224:thm:holder}) and the inequality $j \leq j_0-1$, that
\begin{equation}\label{eqn2331:thm:holder}
 2^{\frac{j}{2}} \int_{\max\{0,t'-2^{-j-1}\}}^{t''-2^{-j-1}}  \Big| (t''-s)^{A(s+2^{-j-1})- \frac{1}{2}} -(t'-s)^{{A(s+2^{-j-1})- \frac{1}{2}}} \Big|ds \le c'' (t''-t')^{A_{\nu_1,\nu_2}} \, .
\end{equation}

\medskip

Let us now bound from above the integral $\mu^{3,2}_j(t'')$. One has
\begin{align}\label{eqn234:thm:holder}
\mu^{3,2}_j(t'')= 2^{\frac{j}{2}} \int_{\max\{0,t''-2^{-j-1}\}}^{t''_j} & \Big| L_{t''}\big(s, A(s+2^{-j-1})\big) -L_{t''}\big(s+2^{-j-1}, A(s+2^{-j-1})\big) \nonumber \\
 & - L_{t'}\big(s,A(s+2^{-j-1})\big)  + L_{t'}\big(s+2^{-j-1},A(s+2^{-j-1})\big) \Big|  ds \nonumber \\
 =  2^{\frac{j}{2}} \int_{\max\{0,t''-2^{-j-1}\}}^{t'_j} & \Big| (t''-s)^{A(s+2^{-j-1}) - \frac{1}{2}}  - (t'-s)^{A(s+2^{-j-1}) - \frac{1}{2}}  \Big|   +  2^{\frac{j}{2}} \int_{t'_j}^{t''_j} (t''-s)^{A(s+2^{-j-1}) - \frac{1}{2}}    ds\,. \nonumber \\
\end{align}
Observe that, one knows from (\ref{eqn:regholdA}) and (\ref{thm:holder:def}) that
\begin{equation}\label{eqn235:thm:holder}
\min\big\{A(x)\,:\, x\in [\nu_1-2^{-j-1},\nu_2+2^{-j-1}]\cap [0,1]\big\}\geq A_{\nu_1,\nu_2} - C_1 2^{-(j+1)\gamma}.
\end{equation}
Hence, one has
\begin{equation}\label{eqn236:thm:holder}
2^{-(j+1)\min\big\{A(x)\,:\, x\in [\nu_1-2^{-j-1},\nu_2+2^{-j-1}]\cap [0,1]\big\}} \leq C' 2^{-(j+1)A_{\nu_1,\nu_2}} \, ,
\end{equation}
where $C'$ is a positive almost surely finite random constant not depending on $t',t'',j$. It follows from the change of variable $u=2^{j+1}(t'-s)$, (\ref{eqn236:thm:holder}), and the Mean Value Theorem that 
\begin{align}\label{eqn237:thm:holder}
 & 2^{\frac{j}{2}} \int_{\max\{0,t''-2^{-j-1}\}}^{t'_j}  \Big| (t''-s)^{A(s+2^{-j-1}) - \frac{1}{2}}  - (t'-s)^{A(s+2^{-j-1}) - \frac{1}{2}}  \Big| ds  \nonumber \\
\leq  & 2^{-\frac{j}{2}-1} \int_{(t'-t'_j)2^{j+1}}^{1-(t''-t')2^{j+1}} \Big| (t''-t'+2^{-j-1}u)^{A(t'+(1-u)2^{-j-1}) - \frac{1}{2}}  - (2^{-j-1}u)^{A(t'+(1-u)2^{-j-1}) - \frac{1}{2}}  \Big|  du \nonumber \\
= & C' 2^{-\frac{1}{2}}  2^{-(j+1)A_{\nu_1,\nu_2}} \int_{(t'-t'_j)2^{j+1}}^{1-(t''-t')2^{j+1}} \Big| \big(2^{j+1}(t''-t')+u\big)^{A(t'+(1-u)2^{-j-1}) - \frac{1}{2}}  - u^{A(t'+(1-u)2^{-j-1}) - \frac{1}{2}}  \Big| du \nonumber \\
\le & C' 2^{-\frac{1}{2}}  2^{(j+1)(1-A_{\nu_1,\nu_2})} (t''-t') \int_{(t'-t'_j)2^{j+1}}^{1-(t''-t')2^{j+1}} u^{A(t'+(1-u)2^{-j-1}) - \frac{3}{2}}  du \nonumber \\
\le & C' 2^{-\frac{1}{2}}  2^{(j+1)(1-A_{\nu_1,\nu_2})} (t''-t') \int_0^1 u^{A(t'+(1-u)2^{-j-1}) - \frac{3}{2}}  du\,. 
\end{align}
Observe that, thanks to (\ref{eqn:bornesA}), the last integral can be bounded from above by a deterministic constant $c'''>0$ not depending on $t',t'',j$. Then, (\ref{eqn224:thm:holder}) and the assumption ${j \leq j_0-1}$ entail that
\begin{equation}\label{eqn2371:thm:holder}
 2^{\frac{j}{2}} \int_{\max\{0,t''-2^{-j-1}\}}^{t'_j}  \Big| (t''-s)^{A(s+2^{-j-1}) - \frac{1}{2}}  - (t'-s)^{A(s+2^{-j-1}) - \frac{1}{2}}  \Big| ds \le C_6 (t''-t')^{A_{\nu_1,\nu_2}} \, ,
\end{equation}
where $C_6 = C' 2^{-\frac{1}{2}} c'''$. 

Let us now provide an appropriate upper bound for the second integral in the right-hand side of (\ref{eqn234:thm:holder}). There is no restriction to assume that $t'_j=t'$ since the integral vanishes in the other case. Using the triangular inequality, the Mean Value Theorem, (\ref{eqn:bornesA}), (\ref{eqn:regholdA}), (\ref{thm:holder:def}), (\ref{eqn224:thm:holder}) and the assumption $j \leq j_0-1$, one gets
\begin{align}\label{eqn2372:thm:holder}
& 2^{\frac{j}{2}} \int_{t'}^{t''_j} (t''-s)^{A(s+2^{-j-1}) - \frac{1}{2}}    ds \nonumber \\
 \le & 2^{\frac{j}{2}} \int_{t'}^{t''_j} \left| (t''-s)^{A(s+2^{-j-1}) - \frac{1}{2}}    - (t''-s)^{A(s) - \frac{1}{2}}  \right| ds + 2^{\frac{j}{2}} \int_{t'}^{t''_j} (t''-s)^{A(s) - \frac{1}{2}}    ds \nonumber \\
 \le & 2^{\frac{j}{2}} \int_{t'}^{t''_j} \left|A(s+2^{-j-1})-A(s) \right| (t''-s)^{\underline{a} - \frac{1}{2}}   \left|\log(t''-s) \right|    ds + 2^{\frac{j}{2}} \int_{t'}^{t''} (t''-s)^{A_{\nu_1,\nu_2} - \frac{1}{2}}    ds \nonumber \\
  \le &  C_3 2^{-j(\gamma-\frac{1}{2})} (t''-t')+ 2^{\frac{j}{2}}  (t''-t')^{A_{\nu_1,\nu_2} + \frac{1}{2}}  \nonumber \\
  \le & C_3 2^{-j(\gamma-\frac{1}{2})} (t''-t')+ 2^{-\frac{1}{2}}  (t''-t')^{A_{\nu_1,\nu_2} }
\end{align}
where $C_3$ is the same random constant as in (\ref{eqn18:thm:holder}).
Putting together, (\ref{eqn2321:thm:holder}), (\ref{eqn2331:thm:holder}), (\ref{eqn2371:thm:holder}) and (\ref{eqn2372:thm:holder}), one gets that
\begin{equation}\label{eqn2373:thm:holder}
\mu_j^3(t',t'') \leq C_7 (t''-t')^{A_{\nu_1,\nu_2}} + C_3 2^{-j(\gamma-\frac{1}{2})} (t''-t') \, ,\quad\mbox{for all $j \in \{0,\dots, j_0-1\}$,}
\end{equation}
where the finite random constants $C_3$ and $C_7= C_5 + c'' + C_6 + 2^{-\frac{1}{2}}$ do not depend on $t',t'',j,j_0$. 

\bigskip

It remains for us to provide an appropriate upper bound for $\mu^3_j(t',t'')$ 
%\begin{align}\label{eqn241:thm:holder}
% 2^{\frac{j}{2}} \int_{t'-2^{-j-1}}^{t''_j} & \Big| L_{t''}\big(s, A(s+2^{-j-1})\big) -L_{t''}\big(s+2^{-j-1}, A(s+2^{-j-1})\big) \nonumber \\
% & - L_{t'}\big(s,A(s+2^{-j-1})\big)  + L_{t'}\big(s+2^{-j-1},A(s+2^{-j-1})\big) \Big|  ds 
%\end{align}
in the case where $j \geq j_0$. One knows from the latter inequality and from (\ref{eqn224:thm:holder})  that ${t'-2^{-j-1} < t' < t''-2^{-j-1} \leq t''_j}$. Thus, it results from (\ref{eqn:mu3:thm:holder}), (\ref{eq:Ldef}) and the triangular inequality that
\begin{align}\label{eqn242:thm:holder}
\mu^3_j(t',t'') = & 2^{\frac{j}{2}} \int_{\max\{0,t'-2^{-j-1}\}}^{t''_j}  \Big| (t''-s)^{A(s+2^{-j-1})- \frac{1}{2}}  -(t''-s-2^{-j-1})_+^{A(s+2^{-j-1})- \frac{1}{2}}  - (t'-s)_+^{A(s+2^{-j-1})- \frac{1}{2}} \Big|  ds \nonumber \\
 \le & 2^{\frac{j}{2}} \int_{t'-2^{-j-1}}^{t''_j}  \Big| (t''-s)^{A(s+2^{-j-1})- \frac{1}{2}}  -(t''-s-2^{-j-1})_+^{A(s+2^{-j-1})- \frac{1}{2}}\Big| ds  \nonumber \\
 & \qquad  +   2^{\frac{j}{2}} \int_{t'-2^{-j-1}}^{t'}  (t'-s)^{A(s+2^{-j-1})- \frac{1}{2}} ds \nonumber \\
\le & 2^{\frac{j}{2}} \int_{t'-2^{-j-1}}^{t''-2^{-j-1}}  \Big| (t''-s)^{A(s+2^{-j-1})- \frac{1}{2}}  -(t''-s-2^{-j-1})^{A(s+2^{-j-1})- \frac{1}{2}}\Big| ds \nonumber \\
& \qquad  +  2^{\frac{j}{2}} \int_{t''-2^{-j-1}}^{t''_j}   (t''-s)^{A(s+2^{-j-1})- \frac{1}{2}} ds  +  2^{\frac{j}{2}} \int_{t'-2^{-j-1}}^{t'}  (t'-s)^{A(s+2^{-j-1})- \frac{1}{2}}   ds\, .
\end{align}
Let now bound from above in a suitable way each of the three integrals appearing in (\ref{eqn242:thm:holder}). Using standard computations, (\ref{eqn236:thm:holder}) and (\ref{thm:holder:def}), one gets that
\begin{align}\label{eqn243:thm:holder}
2^{\frac{j}{2}} \int_{t''-2^{-j-1}}^{t''_j}   (t''-s)^{A(s+2^{-j-1})- \frac{1}{2}} ds & \leq  2^{\frac{j}{2}} \int_{t''-2^{-j-1}}^{t''_j}  2^{-(j+1)(A(s+2^{-j-1})- \frac{1}{2})} ds \nonumber \\
& \leq C' 2^{j+\frac{1}{2}} (t''_j-t''+2^{-j-1})  2^{-(j+1)A_{\nu_1,\nu_2}} \nonumber \\
& \leq C' 2^{-\frac{1}{2}} 2^{-(j+1)A_{\nu_1,\nu_2}}
\end{align}
and
\begin{align}\label{eqn244:thm:holder}
2^{\frac{j}{2}} \int_{t'-2^{-j-1}}^{t'}  (t'-s)^{A(s+2^{-j-1})- \frac{1}{2}}   ds  & \leq 2^{\frac{j}{2}} \int_{t'-2^{-j-1}}^{t'}  (t'-s)^{A_{\nu_1,\nu_2}-\frac{1}{2}}   ds \nonumber \\
& \leq 2^{\frac{j}{2}} \int_{t'-2^{-j-1}}^{t'}  2^{-(j+1)(A_{\nu_1,\nu_2}-\frac{1}{2})}   ds \nonumber \\
& =  2^{-\frac{1}{2}}  2^{-(j+1)A_{\nu_1,\nu_2}} \,.
\end{align}
Moreover, the Mean Value Theorem, standard computations, and (\ref{thm:holder:def}) allow us to obtain that
\begin{align}\label{eqn245:thm:holder}
& 2^{\frac{j}{2}} \int_{t'-2^{-j-1}}^{t''-2^{-j-1}}  \Big| (t''-s)^{A(s+2^{-j-1})- \frac{1}{2}}  -(t''-s-2^{-j-1})^{A(s+2^{-j-1})- \frac{1}{2}}\Big| ds \nonumber \\
\le & 2^{-\frac{j}{2}-1} \int_{t'-2^{-j-1}}^{t''-2^{-j-1}}  (t''-s-2^{-j-1})^{A(s+2^{-j-1})- \frac{3}{2}}   ds \nonumber \\
= & 2^{-\frac{j}{2}-1} \int_{t'}^{t''}  (t''-s)^{A(s)- \frac{3}{2}}   ds \nonumber \\
\le & 2^{-\frac{j}{2}-1} \int_{t'}^{t''}  (t''-s)^{A_{\nu_1,\nu_2} - \frac{3}{2}}   ds \nonumber \\
= & \frac{2^{-\frac{j}{2}-1}}{A_{\nu_1,\nu_2} - \frac{1}{2}} (t''-t')^{A_{\nu_1,\nu_2} - \frac{1}{2}} \, .
\end{align}
Combining (\ref{eqn242:thm:holder}), (\ref{eqn243:thm:holder}), (\ref{eqn244:thm:holder}) and (\ref{eqn245:thm:holder}), it follows that
\begin{align}\label{eqn246:thm:holder}
\mu^3_j(t',t'')
%& 2^{\frac{j}{2}} \int_{t'-2^{-j-1}}^{t''_j}  \Big| (t''-s)^{A(s+2^{-j-1})- \frac{1}{2}}  -(t''-s-2^{-j-1})_+^{A(s+2^{-j-1})- \frac{1}{2}}  - (t'-s)_+^{A(s+2^{-j-1})- \frac{1}{2}} \Big|  ds \nonumber \\
\le & (C_5 +1)  2^{-\frac{1}{2}} 2^{-(j+1)A_{\nu_1,\nu_2}} + \frac{2^{-\frac{j}{2}-1}}{A_{\nu_1,\nu_2} - \frac{1}{2}} (t''-t')^{A_{\nu_1,\nu_2} - \frac{1}{2}} \,,  \quad\mbox{for all $j \geq j_0$.}
\end{align}

\medskip

\textit{4. Conclusion of the second step.} 

In the case where $j \in \{0, \dots, j_0-1\}$, putting together (\ref{eqn:mu:thm:holder}), (\ref{eqn217:thm:holder}), (\ref{eqn227:thm:holder}) and (\ref{eqn2373:thm:holder}), one obtains 
\begin{equation}\label{eqn251:thm:holder}
\lambda^2_j(t',t'') \le C_8 \left( 2^{-j(\gamma-\frac{1}{2})} (t''-t') + (t''-t')^{A_{\nu_0,\nu_2}}\right) \, ,
\end{equation}
where the finite random constant ${C_8= \frac{(\nu_1-\nu_0)^{\underline{a}- \frac{3}{2}}}{3-2\underline{a}} +c+C_7+C_3}$ does not depend on $t',t'',j,j_0$.

In the other case where $j \ge j_0$, combining (\ref{eqn:mu:thm:holder}), (\ref{eqn217:thm:holder}), (\ref{eqn228:thm:holder}) and  (\ref{eqn246:thm:holder}), one gets that
\begin{equation}\label{eqn252:thm:holder}
\lambda^2_j(t',t'') \le C_9 \left( 2^{-\frac{j}{2}} (t''-t')+   2^{-\frac{j}{2}+1}(t''-t')^{A_{\nu_0,\nu_2} - \frac{1}{2}}+  2^{-(j+1)A_{\nu_1,\nu_2}}  \right) \, ,
\end{equation}
where the finite random constant $C_9= \frac{(\nu_1-\nu_0)^{\underline{a}- \frac{3}{2}}}{3-2\underline{a}} +\frac{c'}{2}+ (C_5+1)2^{-\frac{1}{2}} + \frac{1}{2A_{\nu_1,\nu_2} -1}$ does not depend on $t',t'',j,j_0$.

\bigskip

\noindent\textbf{$\bullet$ Step 3 : Conclusion. } \\
Putting together (\ref{eqn0:thm:holder}), (\ref{eqn:thm:holder}), (\ref{eqn19:thm:holder}), (\ref{eqn251:thm:holder}) and (\ref{eqn252:thm:holder}), one obtains that
\begin{align}\label{eqn30:thm:holder}
& \sum_{j=0}^{+ \infty} \sum_{k=0}^{2^j-1} \left| \big\langle K_{t''}, h_{j,k}\big\rangle -\big\langle K_{t'}, h_{j,k}\big\rangle \right| | \varepsilon_{j,k}|
\nonumber\\
  \leq &C_\ast \sum_{j=0}^{j_0-1} \sqrt{j+1}\left(  (C_4 +C_8) 2^{-j(\gamma - \frac{1}{2}) } (t''-t' ) + C_8  (t''-t')^{A_{\nu_0,\nu_2}}\right)  \nonumber \\
& + C_\ast \sum_{j=j_0}^{+ \infty} \sqrt{j+1}\left( C_4 2^{-j(\gamma - \frac{1}{2}) } (t''-t' ) + C_9 \left( 2^{-\frac{j}{2}} (t''-t')+   2^{-\frac{j}{2}+1}(t''-t')^{A_{\nu_0,\nu_2} - \frac{1}{2}}+  2^{-(j+1)A_{\nu_1,\nu_2}}  \right)\right) \, .
\end{align}
Moreover, setting $C_{10}= C_4+2  C_8$ and using  (\ref{eqn224:thm:holder}), it follows that
\begin{align}\label{eqn31:thm:holder}
& \sum_{j=0}^{j_0-1} \sqrt{j+1}\left(  (C_4 +C_8) 2^{-j(\gamma - \frac{1}{2}) } (t''-t' ) + C_8  (t''-t')^{A_{\nu_0,\nu_2}}\right)  \nonumber \\
\leq & \sqrt{j_0}(t''-t')^{A_{\nu_0,\nu_2}}\sum_{j=0}^{j_0-1} \left(  (C_4+C_8) 2^{-j(\gamma - \frac{1}{2}) } (t''-t' )^{1- A_{\nu_0,\nu_2}} + C_8 \right)  \nonumber \\
\leq & \sqrt{\left|\log_2(t''-t')\right|}(t''-t')^{A_{\nu_0,\nu_2}} C_{10}\, j_0\nonumber \\
\leq & C_{10}\left|\log_2(t''-t')\right|^{\frac{3}{2}}(t''-t')^{A_{\nu_0,\nu_2}}
\end{align}
and
\begin{align}\label{eqn32:thm:holder}
 & \sum_{j=j_0}^{+ \infty} \sqrt{j+1}\left( C_4 2^{-j(\gamma - \frac{1}{2}) } (t''-t' ) + C_9 \left( 2^{-\frac{j}{2}} (t''-t')+   2^{-\frac{j}{2}+1}(t''-t')^{A_{\nu_0,\nu_2} - \frac{1}{2}}+  2^{-(j+1)A_{\nu_1,\nu_2}}  \right)\right) \nonumber \\
%\le & (C_4+C_9)  (t''-t' )  \sum_{j=j_0}^{+ \infty} \sqrt{j+1} 2^{-j(\gamma - \frac{1}{2}) }+ 2 C_9  (t''-t')^{A_{\nu_0,\nu_2}- \varepsilon} \sum_{j=j_0}^{+ \infty} \sqrt{j+1} (t''-t')^{-\frac{1}{2} + \varepsilon}2^{-\frac{j}{2}} \nonumber \\
%& \qquad +  C_9 (t''-t')^{A_{\nu_1,\nu_2} - \varepsilon}\sum_{j=j_0}^{+ \infty}  \sqrt{j+1} 2^{-\varepsilon j} \nonumber \\
\le & (C_4+C_9) \bigg (\sum_{j=0}^{+ \infty} 2^{-j(\gamma - \frac{1}{2}) }\,\sqrt{j+1}\bigg)   (t''-t' ) + 2^{\varepsilon+ \frac{1}{2}} C_9\bigg (\sum_{j=0}^{+ \infty} 2^{-\varepsilon j}\,\sqrt{j+1} \bigg)   (t''-t')^{(A_{\nu_0,\nu_2}- \varepsilon)} \nonumber \\
& \qquad +  C_9 \bigg(\sum_{j=0}^{+ \infty}  2^{-\varepsilon j}\,\sqrt{j+1}\bigg)(t''-t')^{(A_{\nu_1,\nu_2} - \varepsilon)}\,, 
\end{align}
where $\varepsilon$ is an arbitrarily small fixed positive real number. 

The inequalities (\ref{eqn30:thm:holder}), (\ref{eqn31:thm:holder}) and (\ref{eqn32:thm:holder}) show that the stochastic process $\{X(t) : t\in I\}$ satisfies almost surely a uniform H\"older condition of order $(A_{\nu_0,\nu_2} -  \varepsilon)$ on the interval $[\nu_1,\nu_2]$. Therefore, one has almost surely that $\beta_X([\nu_1,\nu_2]) \geq (A_{\nu_0,\nu_2} -  \varepsilon)$. This implies that (\ref{eq:A:thm:holder}) is satisfied, since $(A_{\nu_0,\nu_2} -  \varepsilon)$ goes to $A_{\nu_1,\nu_2}$ when $\nu_0$ approaches $\nu_1$ and $\varepsilon$ approaches $0$.
\end{proof}

\section{Simulations}
\label{sec:Sim}

In order to simulate paths of the MPRE $\{X(t):t\in I\}$ defined in~(\ref{def_X}), it seems natural to use its approximation $\{\widetilde{X}^J(t):t\in I\}$ given by (\ref{eqn:XJt}). But so far, we only know that, for each fixed $t\in I$, the random variable $\widetilde{X}^J(t)$ converges in $L^2 (\Omega)$ to the random variable $X(t)$, when $J$ goes to $+\infty$ (see Lemma~\ref{lem:convXJt}). The first main goal of the present section is to show that this weak convergence result can be greatly improved. In fact, Proposition~\ref{prop:conv:XtildeX}, stated below, shows that the convergence also holds almost surely and uniformly in $t\in I$. It is worth mentioning that the main ingredient of the proof of this proposition is Theorem \ref{thm:Haarrepres} which has been obtained in Section~\ref{sec:repX} thanks to the Haar basis. Also, we mention that another ingredient of the proof of Proposition~\ref{prop:conv:XtildeX} is the following classical theorem. 

\begin{thm}\label{thm:levymod}{(Lévy modulus of continuity for Brownian Motion) \cite[Theorem 9.25]{karatzas2012brownian}}
 Let $B$ be the Brownian Motion in~(\ref{def_X}). There exists an event  $\widehat{\Omega} \subseteq \Omega$ of probability 1 such that, for any $\omega \in \widehat{\Omega}$, one has
 \begin{equation}
 \limsup_{\delta\rightarrow 0^+}\Bigg\{\frac{\max\big\{|B(s'', \omega)-B(s',\omega)|:\, (s'',s')\in I^2\,\mbox{ and }\,|s''-s'|\le\delta\big\}}{\sqrt{2\delta \log(1/\delta)}}\Bigg\}=1.
 \end{equation}
\end{thm}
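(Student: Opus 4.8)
The plan is to establish the two inequalities $\limsup\le 1$ and $\limsup\ge 1$ separately, the first being the substantial one. Throughout I would work with the dyadic increments $\Delta_{n,k}:=B\big((k+1)2^{-n}\big)-B\big(k2^{-n}\big)$, $k\in\{0,\dots,2^n-1\}$, which are independent $\mathcal{N}(0,2^{-n})$ random variables, and I would repeatedly invoke the classical Gaussian tail estimates: if $Z\sim\mathcal{N}(0,1)$ then $\Prob(Z>x)\le e^{-x^2/2}$ for $x>0$, together with the matching Mills-ratio lower bound $\Prob(Z>x)\ge \frac{x}{x^2+1}\frac{1}{\sqrt{2\pi}}e^{-x^2/2}$.

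For the upper bound, fix $\eta>0$ and set the threshold $\tau_n:=(1+\eta)\sqrt{2\cdot 2^{-n}\, n\log 2}$. Applying the Gaussian tail bound to each $\Delta_{n,k}$ gives $\Prob(|\Delta_{n,k}|>\tau_n)\le 2\cdot 2^{-(1+\eta)^2 n}$, so a union bound over the $2^n$ intervals yields probability $\le 2^{\,1-n((1+\eta)^2-1)}$, which is summable in $n$. By the Borel--Cantelli lemma, on an event of probability $1$ there is a random $N$ with $|\Delta_{n,k}|\le\tau_n$ for all $n\ge N$ and all $k$. To pass from dyadic increments to an arbitrary pair $(s',s'')$ with $0<s''-s'\le\delta$, I would choose $n$ with $2^{-n}<\delta\le 2^{-(n-1)}$ and express $B(s'')-B(s')$ as a telescoping sum of dyadic increments of levels $\ge n$ read off from the binary expansions of $s'$ and $s''$; summing the bounds $\tau_m$ over these scales produces a convergent geometric-type series dominated by its leading term $\tau_n$, and a careful accounting gives a total at most $(1+\eta')\sqrt{2\delta\log(1/\delta)}$ for an $\eta'$ that shrinks with $\eta$. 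Letting $\eta'\downarrow 0$ yields $\limsup\le 1$.

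For the lower bound, fix $\eta>0$ and set $\sigma_n:=(1-\eta)\sqrt{2\cdot 2^{-n}\,n\log 2}$. Using the \emph{independence} of the $\Delta_{n,k}$ at a fixed level $n$ together with the Mills-ratio lower bound, the probability that \emph{none} of the $2^n$ increments exceeds $\sigma_n$ is at most $\big(1-c\,2^{-(1-\eta)^2 n}/\sqrt n\big)^{2^n}\le\exp\big(-c\,2^{\,n(1-(1-\eta)^2)}/\sqrt n\big)$, which is summable in $n$ since $1-(1-\eta)^2>0$ for small $\eta$. A second application of Borel--Cantelli shows that almost surely, for all large $n$ there is a $k$ with $\Delta_{n,k}>\sigma_n$; taking $s'=k2^{-n}$, $s''=(k+1)2^{-n}$, $\delta=2^{-n}$ exhibits increments realising the rate, whence $\limsup\ge 1$. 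Finally I would intersect over $\eta=1/m$ the full-probability events from both halves to obtain the single event $\widehat\Omega$ on which the stated equality holds.

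The main obstacle is the chaining step in the upper bound: the delicate point is that summing $\tau_m$ over all scales $m\ge n$ must be organised so that the accumulated constant stays arbitrarily close to $1$ rather than merely being finite. This requires expressing $s'$ and $s''$ through their dyadic approximations, controlling how many increments of each level enter the telescoping sum (at most two per level beyond the coarsest), and checking that $\sum_{m\ge n}\sqrt{m\,2^{-m}}$ is governed by its leading term $\sqrt{n\,2^{-n}}\sim\sqrt{\delta\log(1/\delta)}$, the remaining lower-order scales contributing exactly the slack absorbed into $\eta'$.
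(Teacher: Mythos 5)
The paper itself gives no proof of this statement: it is quoted, with attribution, as Theorem 9.25 of Karatzas and Shreve, so your attempt has to be measured against the standard textbook argument. Your lower-bound half is exactly that argument and is correct: independence of the $2^n$ level-$n$ dyadic increments, the Mills-ratio lower bound, a second Borel--Cantelli application, and intersection over $\eta=1/m$ all go through as you describe.

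The upper-bound half, however, has a genuine gap, located precisely at the step you yourself flag as ``the main obstacle''. Controlling only the nearest-neighbour dyadic increments $\Delta_{m,k}$ at the thresholds $\tau_m=(1+\eta)\sqrt{2\cdot 2^{-m}m\log 2}$ and then telescoping through binary expansions cannot yield the constant $1$, no matter how carefully the accounting is organised. The union bound over the $2^m$ positions at level $m$ forces any almost-sure eventual bound on $\max_k|\Delta_{m,k}|$ to be at least $(1-o(1))\sqrt{2\cdot 2^{-m}m\log 2}$ --- your own lower-bound computation shows this threshold is attained --- so the chaining sum, with up to two increments per level as you note, is bounded below by essentially $2\sum_{m\geq n}\sqrt{2\cdot 2^{-m}m\log 2}\approx \big(2/(1-2^{-1/2})\big)\sqrt{2\cdot 2^{-n}n\log 2}$, i.e.\ the ratio of the sum to its leading term tends to $2(2+\sqrt{2})\approx 6.83$, not to $1$; even in the most favourable configuration (both endpoints dyadic at the coarsest level, pivoting at a shared endpoint) the method gives a constant of about $\sqrt{2}$. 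The slack is therefore not concentrated in ``lower-order scales'': the coarsest one or two levels already exceed the allowed total. The actual proofs (Karatzas--Shreve, Theorem 2.9.25, or M\"orters--Peres, Theorem 1.14) repair this by enlarging the controlled family to \emph{long} dyadic increments: at level $n$ one simultaneously bounds $\big|B\big((k+j)2^{-n}\big)-B\big(k2^{-n}\big)\big|$ for all $k$ and all $1\leq j\leq 2^{\epsilon n}$, with thresholds of the form $(1+\eta)\sqrt{2j2^{-n}\,n\log 2}$; this is still only about $2^{(1+\epsilon)n}$ events, so Borel--Cantelli survives provided $(1+\eta)^2>1+\epsilon$. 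An arbitrary increment with $s''-s'\approx\delta$ is then captured in one shot by a single long dyadic increment spanning almost the whole gap, and binary-expansion chaining is needed only at scales finer than $2^{-(1+\epsilon)n}$, where its total contribution is $o\big(\sqrt{\delta\log(1/\delta)}\big)$. As written, your upper-bound argument proves only $\limsup\leq C$ for some finite $C>1$, which suffices for a modulus of continuity of the right order but not for the sharp constant asserted in the theorem.
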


\begin{proposition}\label{prop:conv:XtildeX}
Let  $\Omega_2$ be the event of probability 1 defined as $\Omega_2=\Omega_{**} \cap \widehat{\Omega}$, where $\Omega_{**}$ is the same event of probability~1 as in Theorem~\ref{thm:Haarrepres}. Then, for any $\omega \in \Omega_2$, one has
\begin{equation}\label{eq:conv:XtildeX}
\lim_{J\rightarrow \ii} \left\{\sup_{t\in I} \big|\widetilde{X}^J(t, \omega)-X(t,\omega)\big| \right\}=0 \, .
\end{equation}
\end{proposition}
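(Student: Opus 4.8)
The plan is to control the quantity via the triangle inequality
\[
\sup_{t\in I}\big|\widetilde{X}^J(t,\omega)-X(t,\omega)\big|\le \sup_{t\in I}\big|X^J(t,\omega)-X(t,\omega)\big|+\sup_{t\in I}\big|\widetilde{X}^J(t,\omega)-X^J(t,\omega)\big|,
\]
and to prove that, for every $\omega\in\Omega_2$, each of the two terms on the right-hand side tends to $0$ as $J\to\ii$. The first term is settled at once by the Haar representation: for $\omega\in\Omega_{**}$, Theorem~\ref{thm:Haarrepres} guarantees that the series (\ref{eq:Haarrepres}) converges uniformly in $t$, and since $X^J(t)$ is exactly its partial sum one has, by the triangular inequality,
\[
\sup_{t\in I}\big|X^J(t,\omega)-X(t,\omega)\big|\le \sum_{j=J}^{\ii}\sup_{t\in I}\bigg(\sum_{k=0}^{2^j-1}\big|\big\langle K_t,h_{j,k}\big\rangle\big|\,|\varepsilon_{j,k}(\omega)|\bigg).
\]
This is the tail of the series whose total sum is finite almost surely, by Proposition~\ref{prop:unif}, so it converges to $0$.

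The second term is the substantial part. Using the expression (\ref{eqn:XJt3}) of $X^J$ furnished by Proposition~\ref{prop:mvKt} together with the definition (\ref{eqn:XJt}) of $\widetilde{X}^J$, I would write
\[
\widetilde{X}^J(t,\omega)-X^J(t,\omega)=\sum_{l=0}^{2^J-1}\big(K_t(\delta_{J,l},\omega)-\overline{K}_t^{J,l}(\omega)\big)\,\Delta B_{J,l}(\omega),
\]
and bound it from above by $\big(\max_{0\le l\le 2^J-1}|\Delta B_{J,l}(\omega)|\big)\cdot\sum_{l=0}^{2^J-1}\big|K_t(\delta_{J,l})-\overline{K}_t^{J,l}\big|$. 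The Brownian factor is controlled by the L\'evy modulus of continuity (Theorem~\ref{thm:levymod}): since each $\Delta B_{J,l}$ is an increment of $B$ over an interval of length $2^{-J}$, for $\omega\in\widehat{\Omega}$ there is a finite $c(\omega)$ with $\max_{l}|\Delta B_{J,l}(\omega)|\le c(\omega)\sqrt{(J+1)2^{-J}}$ for all $J$ large enough. This bound does not depend on $t$, so only the sum $\sum_l|K_t(\delta_{J,l})-\overline{K}_t^{J,l}|$ still has to be estimated uniformly in $t$.

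To this end I would write $K_t(\delta_{J,l})-\overline{K}_t^{J,l}=2^J\int_{\delta_{J,l}}^{\delta_{J,l+1}}\big(K_t(\delta_{J,l})-K_t(s)\big)\,ds$ and set $l_0:=[2^Jt]$. The terms with $l>l_0$ vanish, since $K_t\equiv 0$ on the corresponding interval; the single term $l=l_0$ is at most $1$, because $0\le K_t\le 1$ and $0\le \overline{K}_t^{J,l}\le 1$ by (\ref{eq:alignement1}); and for $l<l_0$ I would apply Lemma~\ref{lem_accroissK}. The singular contribution $2^J\int_{\delta_{J,l}}^{\delta_{J,l+1}}(t-s)^{\unda-3/2}(s-\delta_{J,l})\,ds$ sums, after using $s-\delta_{J,l}\le 2^{-J}$, into $\int_0^{\delta_{J,l_0}}(t-s)^{\unda-3/2}\,ds$, which is bounded by $(\unda-1/2)^{-1}$ uniformly in $t$ and $J$ because $\unda-3/2>-1$; the oscillation contribution $2^J\int_{\delta_{J,l}}^{\delta_{J,l+1}}|A(\delta_{J,l})-A(s)|\,ds$ is estimated, through the pathwise H\"older condition (\ref{eqn:regholdA}) valid on $\Omega_0$, by a constant times $2^{J(1-\gamma)}$. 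Hence $\sup_{t\in I}\sum_l|K_t(\delta_{J,l})-\overline{K}_t^{J,l}|\le C(\omega)\big(1+2^{J(1-\gamma)}\big)$, so the second term is at most $C(\omega)c(\omega)\sqrt{(J+1)2^{-J}}\big(1+2^{J(1-\gamma)}\big)$, which is of order $\sqrt{J+1}\,\big(2^{-J/2}+2^{J(1/2-\gamma)}\big)$ and therefore tends to $0$ since $\gamma>1/2$. Strictly speaking one here works on $\Omega_2\cap\Omega_0$, which is still of probability one, the condition (\ref{eqn:regholdA}) being the standing assumption of Section~\ref{sec:Hol}.

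The main obstacle is precisely this uniform-in-$t$ estimate of $\sum_l|K_t(\delta_{J,l})-\overline{K}_t^{J,l}|$. The kernel $K_t$ has an integrable but genuine singularity at $s=t$, so one must isolate the dyadic interval containing $t$ and absorb the blow-up of $(t-s)^{\unda-3/2}$ on the intervals immediately below $t$ (which is exactly what the telescoping into a single convergent integral achieves), while ensuring that the resulting growth $2^{J(1-\gamma)}$ coming from the oscillation of $A$ is dominated by the decay $2^{-J/2}$ of the Brownian increments; this balance succeeds precisely because $\gamma>1/2$, mirroring the role played by the condition $\rho>1/2$ in Section~\ref{sec:repX}.
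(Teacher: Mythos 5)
Your proposal is correct and follows essentially the same route as the paper's own proof: reduction via Theorem~\ref{thm:Haarrepres} to the uniform control of $\widetilde{X}^J-X^J$, bounding the Brownian increments through L\'evy's modulus of continuity, isolating the dyadic interval containing $t$, and estimating the remaining terms via Lemma~\ref{lem_accroissK} and the H\"older condition (\ref{eqn:regholdA}) so that $2^{-J/2}\sqrt{J+1}$ dominates $2^{J(1-\gamma)}$ thanks to $\gamma>\frac{1}{2}$. The only (harmless) deviations are cosmetic: you bound the term $l=[2^Jt]$ by $1$ where the paper uses $2$, and you rightly note the implicit intersection with $\Omega_0$, which the paper leaves tacit.
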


\begin{proof}[Proof of Proposition~\ref{prop:conv:XtildeX}]
In view of Theorem \ref{thm:Haarrepres} and (\ref{eqn:XJt2}), it is sufficient to show that,  on the  event $\Omega_2$, one has
\begin{equation}\label{eqn2:prop:conv:XtildeX}
\lim_{J\rightarrow \ii} \left\{ \sup_{t\in I} \big|\widetilde{X}^J(t)-X^J(t)\big|\right\} = 0.
\end{equation}
Let $J \in \Z_+$ be arbitrary and fixed. Using the triangular inequality, (\ref{eqn:XJt3}) and Theorem \ref{thm:levymod}, one gets
\begin{align}\label{eqn:prop:conv:XtildeX}
 \big|\widetilde{X}^J(t)-X^J(t)\big| &  = \left|\sum_{l=0}^{[2^Jt]} \left(K_t(\delta_{J,l})-\overline{K}_t^{J,l} \right) \Delta B_{J,l} \right| \le \sum_{l=0}^{[2^Jt]} \left|K_t(\delta_{J,l})-\overline{K}_t^{J,l} \right| \left|\Delta B_{J,l}\right| \nonumber \\
& \le \hat{C} 2^{-\frac{J}{2}} \sqrt{1+J} \sum_{l=0}^{[2^Jt]} \left|K_t(\delta_{J,l})-\overline{K}_t^{J,l} \right| \nonumber \\
& = \hat{C} 2^{-\frac{J}{2}}  \sqrt{1+J} \left( \left|K_t(\delta_{J,[2^Jt]})-\overline{K}_t^{J,[2^Jt]} \right| + \sum_{l=0}^{[2^Jt]-1} \left|K_t(\delta_{J,l})-\overline{K}_t^{J,l} \right| \right) \, ,
\end{align}
where %for the last inequality, one uses Lévy modulus (see \cite[Theorem 9.25 p 114]{karatzas2012brownian}) and 
$\displaystyle \hat{C}:=\sup_{j\in \Z_+, l\in \{0,\ldots 2^j-1\}} |\Delta B_{j,l}|2^{\frac{j}{2}} (1+j)^{- \frac{1}{2}}<\infty$. %Note also that this upper bound holds on a event $\widehat{\Omega}$ of probability one. 
One knows from the inequality (\ref{maj32:lem:conv:XJtXt}) in the Appendix~\ref{sec:Appen} that $\big|K_t(\delta_{J,[2^Jt]})-\overline{K}_t^{J,[2^Jt]} \big|\le 2$. Let us provide a suitable upper bound for
$$
\sum_{l=0}^{[2^Jt]-1} \left|K_t(\delta_{J,l})-\overline{K}_t^{J,l} \right|.
$$
0bserve that, for any $l\in\big\{0,\ldots,  [2^Jt]-1\big\}$, one has $t\ge\delta_{J,l+1}$. Thus, it results from Lemma~\ref{lem_accroissK},  (\ref{eqn:mvKt}) and Condition~(\ref{eqn:regholdA})  that  
\begin{align*}
 \sum_{l=0}^{[2^Jt]-1}  \left|K_t(\delta_{J,l})-\overline{K}_t^{J,l} \right| & \le c_0 2^J \sum_{l=0}^{[2^Jt]-1}  \left( \int_{\delta_{J,l}}^{\delta_{J,l+1}} |A(s)-A(\delta_{J,l})| ds + \int_{\delta_{J,l}}^{\delta_{J,l+1}} (t-s)^{\unda-\frac{3}{2}}(s-\delta_{J,l}) ds  \right) \nonumber \\
& \le C_0^* 2^J \sum_{l=0}^{[2^Jt]-1}  \int_{\delta_{J,l}}^{\delta_{J,l+1}} (s- \delta_{J,l} )^{\gamma} ds + c_0 \sum_{l=0}^{[2^Jt]-1}  \int_{\delta_{J,l}}^{\delta_{J,l+1}} (t-s)^{\unda-\frac{3}{2}}  ds \nonumber \\
& = \frac{C_0^*}{\gamma+1} 2^J 2^{-J(1+\gamma)} [2^Jt] +  c_0 \int_{0}^{\delta_{J,[2^Jt]}} (t-s)^{\unda-\frac{3}{2}}  ds \nonumber \\
& \le \frac{C_0^*}{\gamma+1} 2^{J(1-\gamma)} + \frac{c_0}{\unda-\frac{1}{2}} t^{\unda-\frac{1}{2}} \le  \frac{C_0^*}{\gamma+1} 2^{J(1-\gamma)} + \frac{c_0}{\unda-\frac{1}{2}} \, ,
 \end{align*}
where $C_0^*:=c_0 C_1$. Therefore, one deduces from (\ref{eqn:prop:conv:XtildeX}) and the last inequality that
\begin{align*}
 |\widetilde{X}^J(t)-X^J(t)|  & \le \hat{C} 2^{-\frac{J}{2}} \sqrt{1+J} \left( \left|K_t(\delta_{J,[2^Jt]})-\overline{K}_t^{J,[2^Jt]} \right| + \sum_{l=0}^{[2^Jt]-1} \left|K_t(\delta_{J,l})-\overline{K}_t^{J,l} \right| \right) \nonumber \\
& \le \hat{C} 2^{-\frac{J}{2}} \sqrt{1+J} \left( 2 + \frac{c_0^*}{\gamma+1} 2^{J(1-\gamma)} + \frac{c_0}{\unda-\frac{1}{2}}  \right) \, .
\end{align*}
Then using the inequality $\gamma>\frac{1}{2}$, one obtains (\ref{eqn2:prop:conv:XtildeX}).
\end{proof}

The second main goal of the present section is to show that simulating paths of the MPRE $\{X(t):t\in I\}$, defined in~(\ref{def_X}), can also be done by approximating it by the stochastic process $\{\widehat{X}^J(t):t\in I \}$ defined as follows: 

\begin{definition}
For each fixed $J\in\Z_+$, the stochastic process $\{\widehat{X}^J(t):t\in I \}$ is defined, for all $t\in I$, by:
\begin{equation*}
\widehat{X}^J(t):=\sum_{l=0}^{2^J-1} \widehat{K}_t^{J,l} \Delta B_{J,l},
\end{equation*}
where, for every $l\in\{0,\ldots ,2^J-1\}$, $\Delta B_{J,l}$ is as in (\ref{def:delta}) and 
\begin{align}
\label{eq:defKchap}
\widehat{K}_t^{J,l}& :=2^J\int_{\delta_{J,l}}^{\delta_{J,l+1}} (t-s)_+^{A(\delta_{J,l})-\frac{1}{2}} ds\\
& = \frac{2^J}{A(\delta_{J,l})+\frac{1}{2}} \left( (t-\delta_{J,l})_+^{A(\delta_{J,l})+\frac{1}{2}} - (t-\delta_{J,l+1})_+^{A(\delta_{J,l})+\frac{1}{2}} \right).
\end{align}
%Moreover, we  define the stochastic process $\{\widehat{X}^J(t):t\in I \}$  by 
\end{definition}
The following proposition, whose proof mainly relies on Proposition~\ref{prop:conv:XtildeX}, shows that, when $J$ goes to $+\infty$, $\{\widehat{X}^J(t):t\in I \}$ converges to $\{X(t):t\in I\}$ almost surely and uniformly in $t\in I$.

\begin{proposition}
Let  $\Omega_2$ be the same event of probability~1 as in Proposition~\ref{prop:conv:XtildeX}. Then, for any   $\omega \in \Omega_2$, one has
\begin{equation}
\lim_{J\rightarrow \ii} \left\{ \sup_{t\in I } \big|\widehat{X}^J(t,\omega)-X(t,\omega)\big| \right\}=0.
\end{equation}
\end{proposition}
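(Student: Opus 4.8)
The plan is to reduce everything to Proposition~\ref{prop:conv:XtildeX}: since $\sup_{t\in I}\big|\widetilde X^J(t,\omega)-X(t,\omega)\big|\to 0$ on $\Omega_2$, the triangular inequality shows it is enough to prove that $\sup_{t\in I}\big|\widehat X^J(t,\omega)-\widetilde X^J(t,\omega)\big|\to 0$ on $\Omega_2$. Both processes are finite sums against the same Brownian increments, so I would write
\[
\widehat X^J(t)-\widetilde X^J(t)=\sum_{l=0}^{2^J-1}\big(\widehat K_t^{J,l}-K_t(\delta_{J,l})\big)\,\Delta B_{J,l},
\]
and then split $\widehat K_t^{J,l}-K_t(\delta_{J,l})=\big(\widehat K_t^{J,l}-\overline K_t^{J,l}\big)+\big(\overline K_t^{J,l}-K_t(\delta_{J,l})\big)$. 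Summing the second difference produces exactly $X^J(t)-\widetilde X^J(t)$, a quantity already shown to tend to $0$ uniformly in $t$ inside the proof of Proposition~\ref{prop:conv:XtildeX} (see (\ref{eqn2:prop:conv:XtildeX})). Hence the only genuinely new ingredient is a uniform estimate of $\widehat K_t^{J,l}-\overline K_t^{J,l}$, which compares the \emph{frozen-exponent} average with the true average of $K_t$ over the dyadic cell.

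The key estimate is as follows. From (\ref{eq:defKchap}) and (\ref{eqn:mvKt}),
\[
\widehat K_t^{J,l}-\overline K_t^{J,l}=2^J\int_{\delta_{J,l}}^{\delta_{J,l+1}}\Big((t-s)_+^{A(\delta_{J,l})-\frac12}-(t-s)_+^{A(s)-\frac12}\Big)\,ds.
\]
On the set $\{s<t\}$ I would apply the Mean Value Theorem to the exponent map $v\mapsto(t-s)^{v-\frac12}$, which yields a factor $(t-s)^{e-\frac12}\big|\log(t-s)\big|$ with $e$ between $A(s)$ and $A(\delta_{J,l})$, so that $e\in[\unda,\ova]$ by (\ref{eqn:bornesA}). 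Two facts then combine: the Hölder condition (\ref{eqn:regholdA}) gives $\big|A(\delta_{J,l})-A(s)\big|\le C_1\,2^{-J\gamma}$ for $s$ in the cell, while $(t-s)^{e-\frac12}\big|\log(t-s)\big|\le\sup_{x\in(0,1]}x^{\unda-\frac12}\big|\log x\big|<+\infty$ because $e\ge\unda>\frac12$. Integrating over a cell of length $2^{-J}$ yields $\big|\widehat K_t^{J,l}-\overline K_t^{J,l}\big|\le C\,2^{-J\gamma}$, uniformly in $t\in I$ and in $l$ — and crucially this bound also holds for the cell containing $t$, since the remaining integrand vanishes there.

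To finish I would sum over $l$. Only the cells with $\delta_{J,l}<t$ contribute, so there are at most $2^J$ nonzero terms, and the Lévy modulus of continuity (Theorem~\ref{thm:levymod}), used exactly as in the proof of Proposition~\ref{prop:conv:XtildeX}, bounds each increment by $\big|\Delta B_{J,l}\big|\le\hat C\,2^{-J/2}\sqrt{1+J}$. Therefore
\[
\sup_{t\in I}\big|\widehat X^J(t)-X^J(t)\big|\le\sum_{l}\big|\widehat K_t^{J,l}-\overline K_t^{J,l}\big|\,\big|\Delta B_{J,l}\big|\le C\hat C\,\sqrt{1+J}\,2^{J(\frac12-\gamma)},
\]
which tends to $0$ since $\gamma>\frac12$. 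Combining this with $\sup_t\big|X^J(t)-\widetilde X^J(t)\big|\to0$ and with Proposition~\ref{prop:conv:XtildeX} gives the claim on $\Omega_2$.

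The crux, and the step I expect to be most delicate, is the decision to compare $\widehat K_t^{J,l}$ with the true average $\overline K_t^{J,l}$ rather than directly with the point value $K_t(\delta_{J,l})$. Freezing only the exponent keeps the integration variable intact, so the Mean Value Theorem produces the mild, \emph{bounded} singularity $(t-s)^{e-\frac12}\big|\log(t-s)\big|$; had I instead frozen the argument, I would confront the power singularity $(t-s)^{A(\delta_{J,l})-\frac32}$, which is non-integrable near $s=t$ and would force a separate, cruder treatment of the cell containing $t$ — precisely the difficulty already met (and absorbed by the bound $\le 2$) in the proof of Proposition~\ref{prop:conv:XtildeX}. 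Routing the frozen argument through $\overline K_t^{J,l}$ confines that difficulty to the already-established proposition and leaves only the benign exponent comparison to be carried out here.
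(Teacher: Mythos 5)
Your proof is correct, but it takes a genuinely different route from the paper's at the key step. The paper bounds $\sup_{t\in I}\big|\widehat{X}^J(t)-\widetilde{X}^J(t)\big|$ directly, comparing the frozen-exponent cell average $\widehat{K}_t^{J,l}$ with the point value $K_t(\delta_{J,l})$: both involve the \emph{same} exponent $A(\delta_{J,l})$, so the Mean Value Theorem is applied in the base variable, producing the factor $(s-\delta_{J,l})\,(t-s)^{A(\delta_{J,l})-\frac32}$; summing the resulting integrals over the cells $l<[2^Jt]$ yields the uniformly bounded quantity $\int_0^{\delta_{J,[2^Jt]}}(t-s)^{\unda-\frac32}\,ds\le(\unda-\tfrac12)^{-1}$, the boundary cell $l=[2^Jt]$ is absorbed by the crude bound $2$, and the H\"older condition on $A$ plays no role in this new step, giving the rate $2^{-J/2}\sqrt{1+J}$. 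You instead insert the true average $\overline{K}_t^{J,l}$, recycle $\sup_{t\in I}\big|X^J(t)-\widetilde{X}^J(t)\big|\to 0$ from (\ref{eqn2:prop:conv:XtildeX}), and apply the Mean Value Theorem in the \emph{exponent} to the genuinely new piece $\widehat{K}_t^{J,l}-\overline{K}_t^{J,l}$, paying the H\"older constant via $\big|A(\delta_{J,l})-A(s)\big|\le C_1 2^{-J\gamma}$ per cell and ending with the rate $\sqrt{1+J}\,2^{J(\frac12-\gamma)}$. Both arguments are valid: your new term needs $\gamma>\frac12$ where the paper's new term does not, but since the recycled bound from the proof of Proposition~\ref{prop:conv:XtildeX} already carries a $2^{J(1-\gamma)}$ factor, the overall rate is unchanged; in exchange, your route needs no separate treatment of the cell containing $t$ (the integrand vanishes for $s\ge t$ and your per-cell bound is uniform), and your use of (\ref{eqn:regholdA}) implicitly restricts to $\Omega_0$ exactly as the paper's own proof of Proposition~\ref{prop:conv:XtildeX} already does, so no new hypothesis is introduced.

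One remark on your closing commentary: the singularity $(t-s)^{A(\delta_{J,l})-\frac32}$ is in fact \emph{integrable} near $s=t$, since $A(\delta_{J,l})-\frac32\ge\unda-\frac32>-1$ by (\ref{eqn:bornesA}) --- the paper integrates precisely this quantity. What actually forces the crude bound on the boundary cell in the frozen-argument route is the kink of $s\mapsto(t-s)_+^{A(\delta_{J,l})-\frac12}$ at $s=t$, across which the Mean Value Theorem cannot be applied and near which the derivative is unbounded. This misstatement occurs only in your motivation, not in the proof itself, which is sound.
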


\begin{proof}
In view of Proposition \ref{prop:conv:XtildeX}, it enough to prove that, on $\Omega_2$, one has
\begin{equation}\label{Ant:eq:diffhattilde}
\lim_{J\rightarrow \ii} \left\{ \sup_{t\in I} \big|\widehat{X}^J(t)-\widetilde{X}^J(t)\big| \right\} = 0.
\end{equation}
Let $J \in \Z_+$ be arbitrary and fixed. Similarly to (\ref{eqn:prop:conv:XtildeX}), it can be shown that the following inequality holds on $\Omega_2$:
\begin{align}
\label{A:eq:bound3}
 \big|\widehat{X}^J(t)-X^J(t)\big| & %= \left|\sum_{l=0}^{[2^Jt]} \left(K_t(\delta_{J,l})-\widehat{K}_t^{J,l}  \right) \Delta B_{J,l} \right|  \le \sum_{l=0}^{[2^Jt]} \left|K_t(\delta_{J,l})-\widehat{K}_t^{J,l} \right| \left|\Delta B_{J,l}\right| \nonumber \\
%& \le \hat{C} \sqrt{1+J} 2^{-\frac{J}{2}} \sum_{l=0}^{[2^Jt]} \left|K_t(\delta_{J,l})-\widehat{K}_t^{J,l} \right|= 
\leq \hat{C} 2^{-\frac{J}{2}}\sqrt{1+J}  \left( \left|K_t(\delta_{J,[2^Jt]})-\widehat{K}_t^{J,[2^Jt]} \right| + \sum_{l=0}^{[2^Jt]-1} \left|K_t(\delta_{J,l})-\widehat{K}_t^{J,l} \right| \right) \, ,
\end{align}
%where for the last inequality, one uses Lévy modulus (see \cite[Theorem 9.25 p 114]{karatzas2012brownian}) and $\hat{C}:=\sup_{J\ge 0, l\in \{0,\ldots 2^J-1\}} \dfrac{|\Delta B_{J,l}| }{\sqrt{(1+J)2^J}}$. Note also that this upper bound holds on a event $\widehat{\Omega}$ of probability one.
where $\hat{C}$ is the same as in (\ref{eqn:prop:conv:XtildeX}). One can derive from Remark~\ref{rem:L} and the triangular inequality that 
\begin{equation}
\label{A:eq:bound2}
\left|K_t(\delta_{J,[2^Jt]})-\widehat{K}_t^{J,[2^Jt]} \right| \leq 2^J \int_{\delta_{J,[2^Jt]}}^{\delta_{J, [2^Jt]+1}} \Big| L_t\big(\delta_{J,[2^Jt]}, A(\delta_{J,[2^Jt]})-\frac{1}{2}\big) - L_t\big(s, A(\delta_{J,[2^Jt]})-\frac{1}{2}\big)\Big| ds 
\le 2\,.
\end{equation}
Let us provide a suitable upper bound for
$$
\sum_{l=0}^{[2^Jt]-1} \left|K_t(\delta_{J,l})-\widehat{K}_t^{J,l} \right|\, .
$$
Recall that for any $l\in\big\{0,\ldots,  [2^Jt]-1\big\}$, one has $t\ge\delta_{J,l+1}$. Thus, it results from (\ref{def_Kt}), (\ref{eq:defKchap}) and the Mean Value Theorem that
\begin{align*}
 \sum_{l=0}^{[2^Jt]-1}  \left|K_t(\delta_{J,l})-\widehat{K}_t^{J,l} \right| & \le 2^J \sum_{l=0}^{[2^Jt]-1} \int_{\delta_{J,l}}^{\delta_{J,l+1}} |(t-\delta_{J,l})^{A(\delta_{J,l})-\frac{1}{2}}-(t-s)^{A(\delta_{J,l})-\frac{1}{2}}| ds \nonumber \\
& \le 2^J \sum_{l=0}^{[2^Jt]-1} \int_{\delta_{J,l}}^{\delta_{J,l+1}} (s-\delta_{J,l}) (t-s)^{A(\delta_{J,l})-\frac{3}{2}} ds \nonumber \\
& \le \sum_{l=0}^{[2^Jt]-1} \int_{\delta_{J,l}}^{\delta_{J,l+1}} (t-s)^{A(\delta_{J,l})-\frac{3}{2}} ds \nonumber \\
& \le \int_{0}^{\delta_{J,[2^Jt]}} (t-s)^{\underline{a}-\frac{3}{2}} ds  = \frac{1}{\underline{a}-\frac{1}{2}} \left( t^{\underline{a}-\frac{1}{2}} - (t-\delta_{J,[2^Jt]})^{\underline{a}-\frac{1}{2}} \right)  \nonumber \\
& \le \frac{1}{\unda-\frac{1}{2}} \, .
 \end{align*}
Finally, combining the last inequality with (\ref{A:eq:bound2}) and (\ref{A:eq:bound3}), one gets that
\begin{align*}
 |\widehat{X}^J(t)-X^J(t)| &  \le \hat{C} 2^{-\frac{J}{2}} \sqrt{1+J} \left( \left|K_t(\delta_{J,[2^Jt]})-\overline{K}_t^{J,[2^Jt]} \right| + \sum_{l=0}^{[2^Jt]-1} \left|K_t(\delta_{J,l})-\overline{K}_t^{J,l} \right| \right) \nonumber \\
& \le \hat{C} 2^{-\frac{J}{2}}\sqrt{1+J}  \left( 2 + \frac{1}{\unda-\frac{1}{2}} \right) \, ,
\end{align*}
which shows that the (\ref{Ant:eq:diffhattilde}) holds.
\end{proof}

Before concluding this section let us provide some simulations of the processes $\{\widetilde{X}^J(t) : t\in I \}$ and $\{\widehat{X}^J(t) : t\in I \}$ with a random parameter $\{A(s):s\in I\}$ chosen so that, for all $s\in I$, one has: 
$$
A(s)=\overline{\underline{ R_H(s) }}_{\, a}^{\, b}:=a+(b-a)\left(\frac{R_H(s)-\min_{x\in I} R_H(x)}{\max_{x\in I} R_H(x) - \min_{x\in I} R_H(x)}\right)\,,
$$
where $a,b$ are two arbitrary fixed real numbers satisfying $1/2<a\le b<1$, and where $R_H$ is the Riemann-Liouville process of Hurst parameter $H$ introduced in (\ref{def_RiemLiou}).
In these simulations two significantly different values for Hurst parameter $H$ of $R_H$ have be taken, namely $H=0.58$ and $H=0.9$. These simulations tend to confirm a phenomenon which somehow has already appeared in the statement of Theorem~\ref{thm:holder}, namely that there are close connections between path behavior of the MPRE $\{X(t) : t\in I \}$ and the random values taken by its parameter $\{A(s):s\in I\}$: the paths of $\{X(t) : t\in I \}$ are rather rough (resp. rather smooth) in the neighbourhood of the points where the values of $\{A(s):s\in I\}$ are close to $1/2$ (resp. close to $1$). 

% Each time, the random exponent chosen is an approximation of Riemann-Liouville process, denoted by $\{R_H(s) : s\in I \}$ with Hurst parameter $H=0.58$ and $H=0.9$ normalized thanks to the function 
%where $a,b$ are chosen such that $a\neq b$, $[a,b]\subset (1/2,1).$

\begin{figure}[!ht]
  \begin{center}
    \subfloat[$A(\cdot)$ with $H=0.58$]{
      \includegraphics[width=0.47\textwidth]{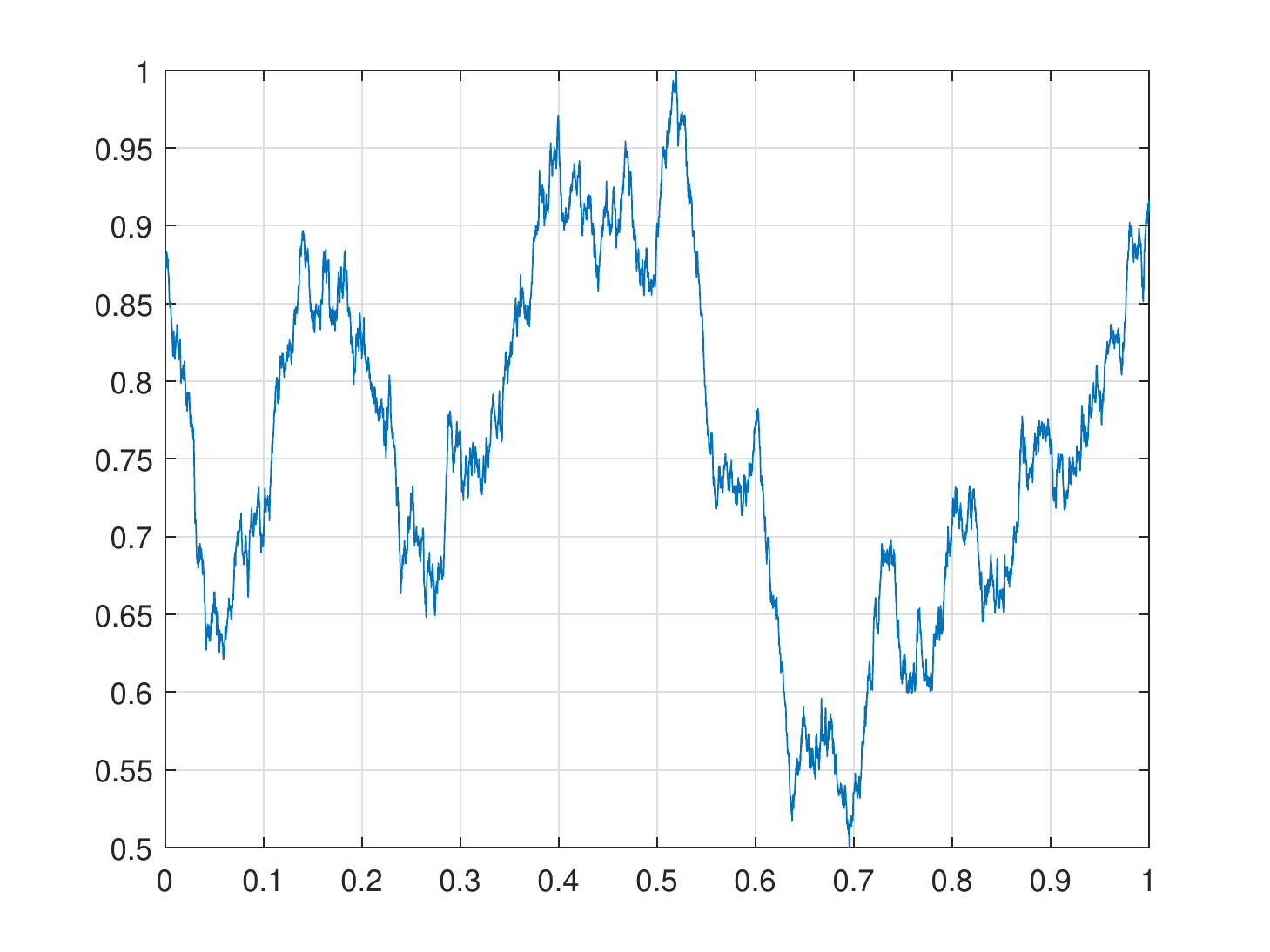}
%      \label{sub:renonc}
                         }
    \subfloat[Superposition of realizations of $\widehat{X}^J$ and $\widetilde{X}^J$]{
      \includegraphics[width=0.47\textwidth]{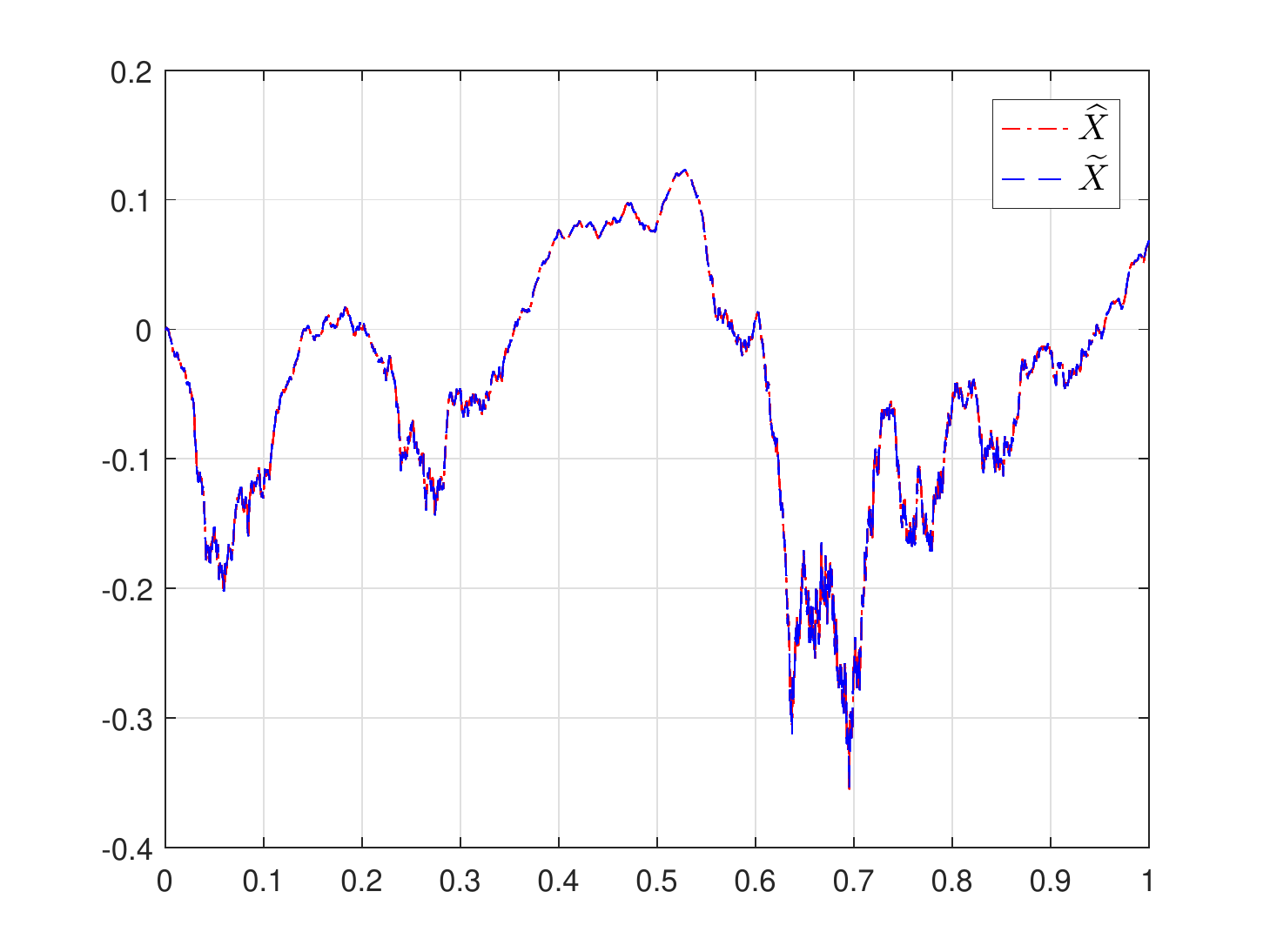}
%      \label{sub:popul}
                         }\\
  
    \subfloat[$\widehat{X}^J$]{
      \includegraphics[width=0.47\textwidth]{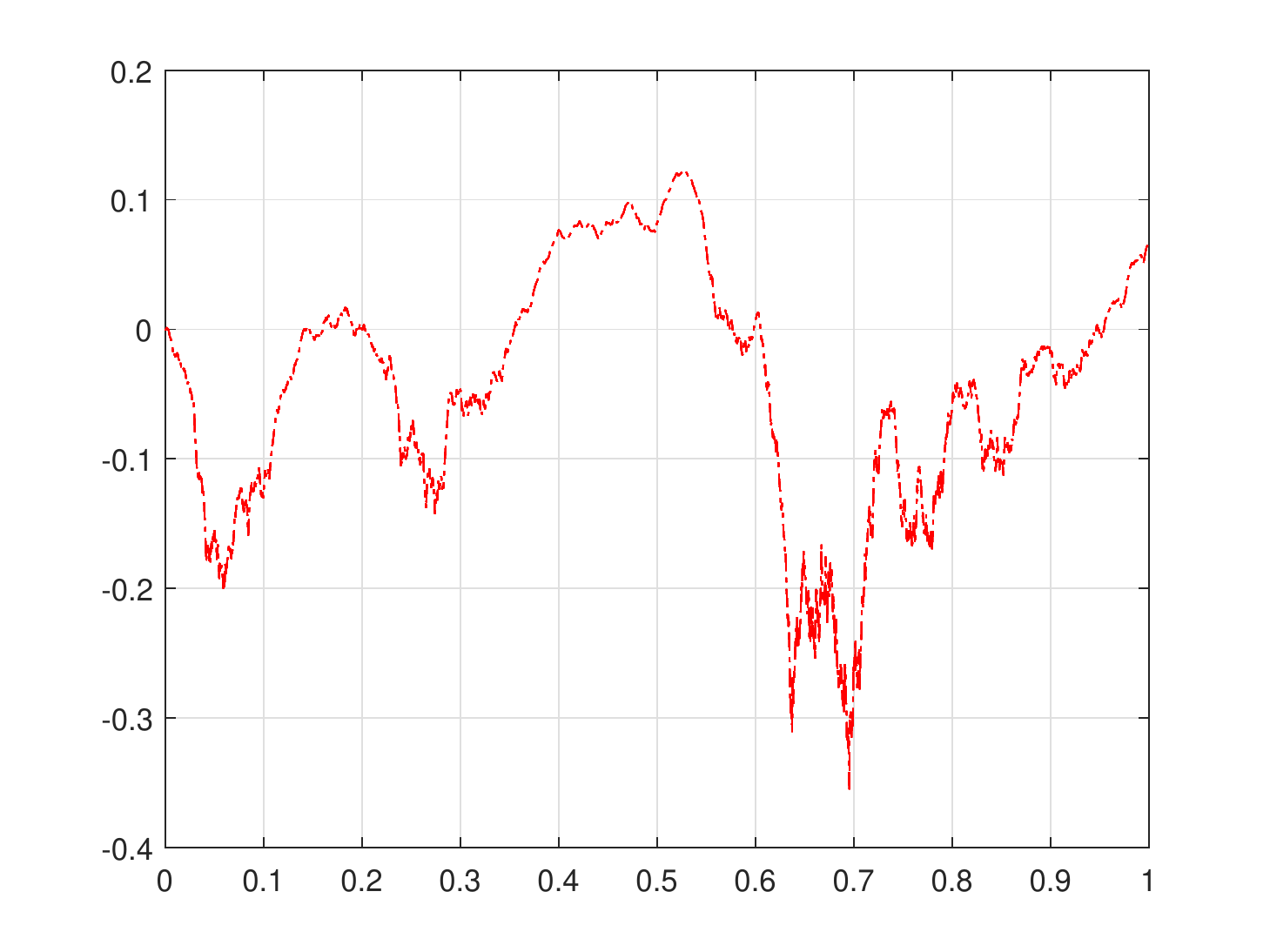}
%      \label{sub:renonc}
                         }
    \subfloat[$\widetilde{X}^J$]{
      \includegraphics[width=0.47\textwidth]{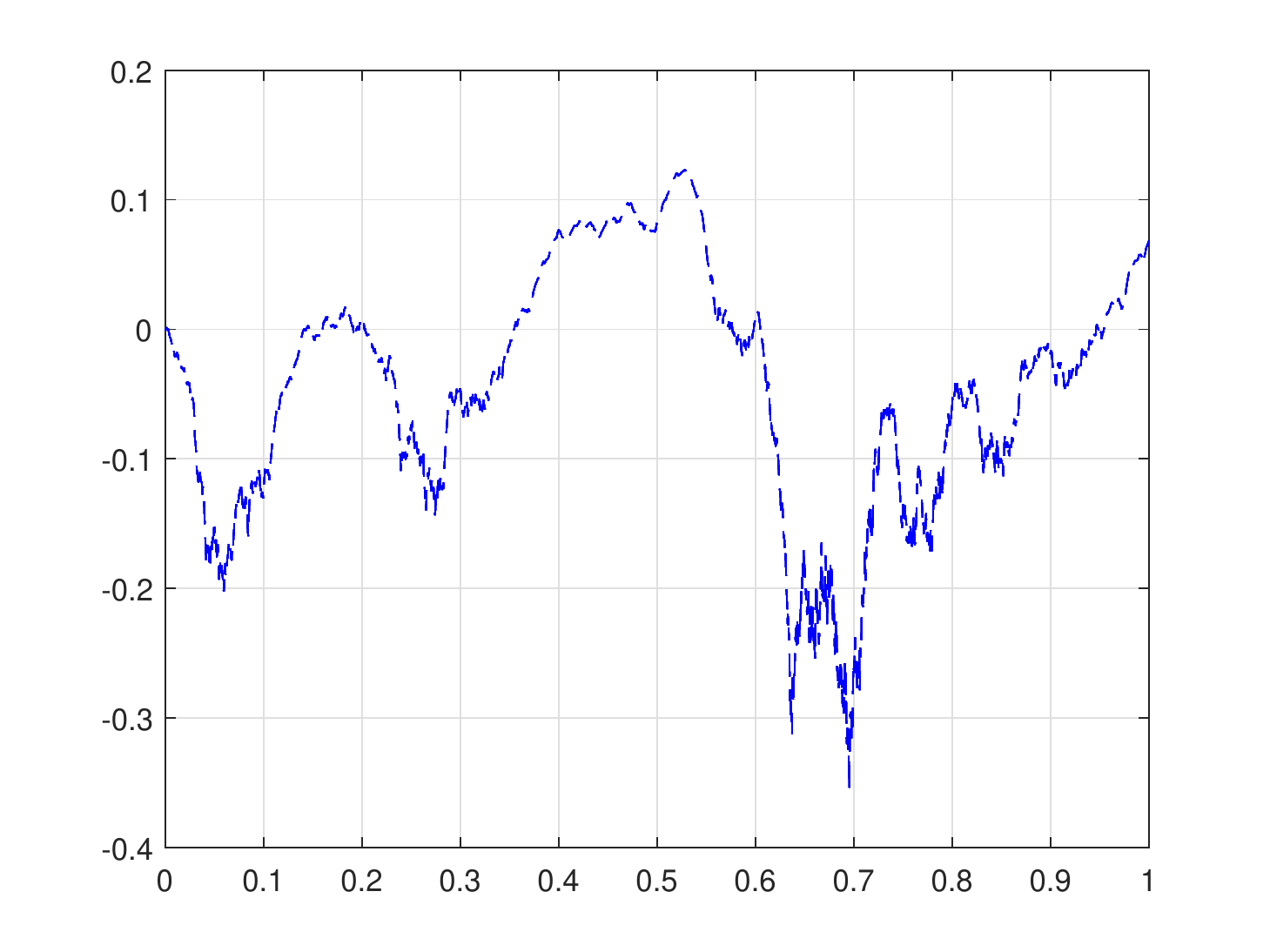}
%      \label{sub:popul}
                         }                       
                         
% %    \caption{Theoretical and numerical}
%     \label{fig:sim1}

    \subfloat[$A(\cdot)$ with $H=0.9$]{
      \includegraphics[width=0.47\textwidth]{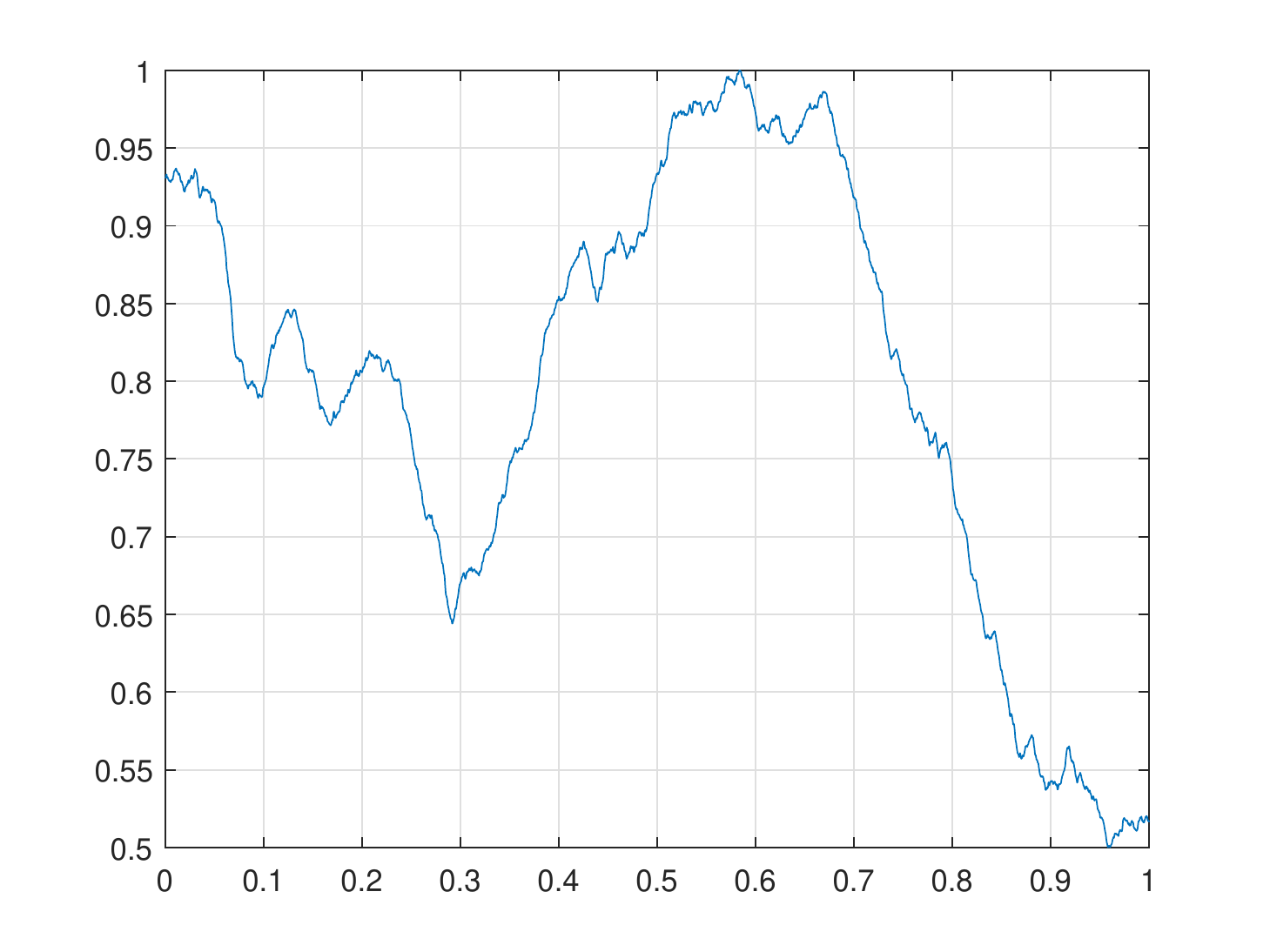}
%      \label{sub:renonc}
                         }
    \subfloat[Superposition of realizations of $\widehat{X}^J$ and $\widetilde{X}^J$]{
      \includegraphics[width=0.47\textwidth]{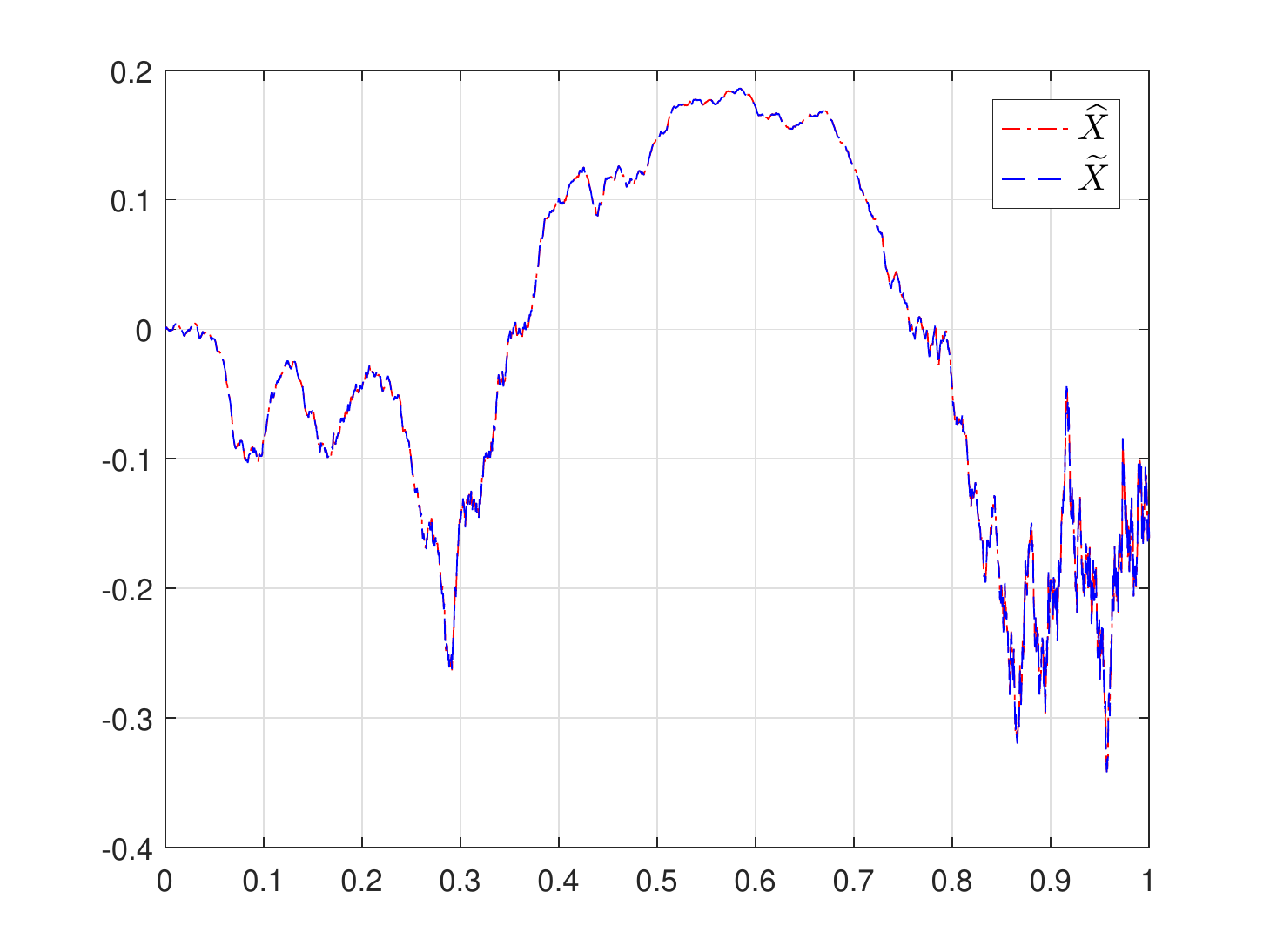}
%      \label{sub:popul}
                         }\\
                         
\end{center}
\end{figure}

\begin{figure}[!ht]
  \begin{center}
%\ContinuedFloat
    \subfloat[$\widehat{X}^J$]{
      \includegraphics[width=0.47\textwidth]{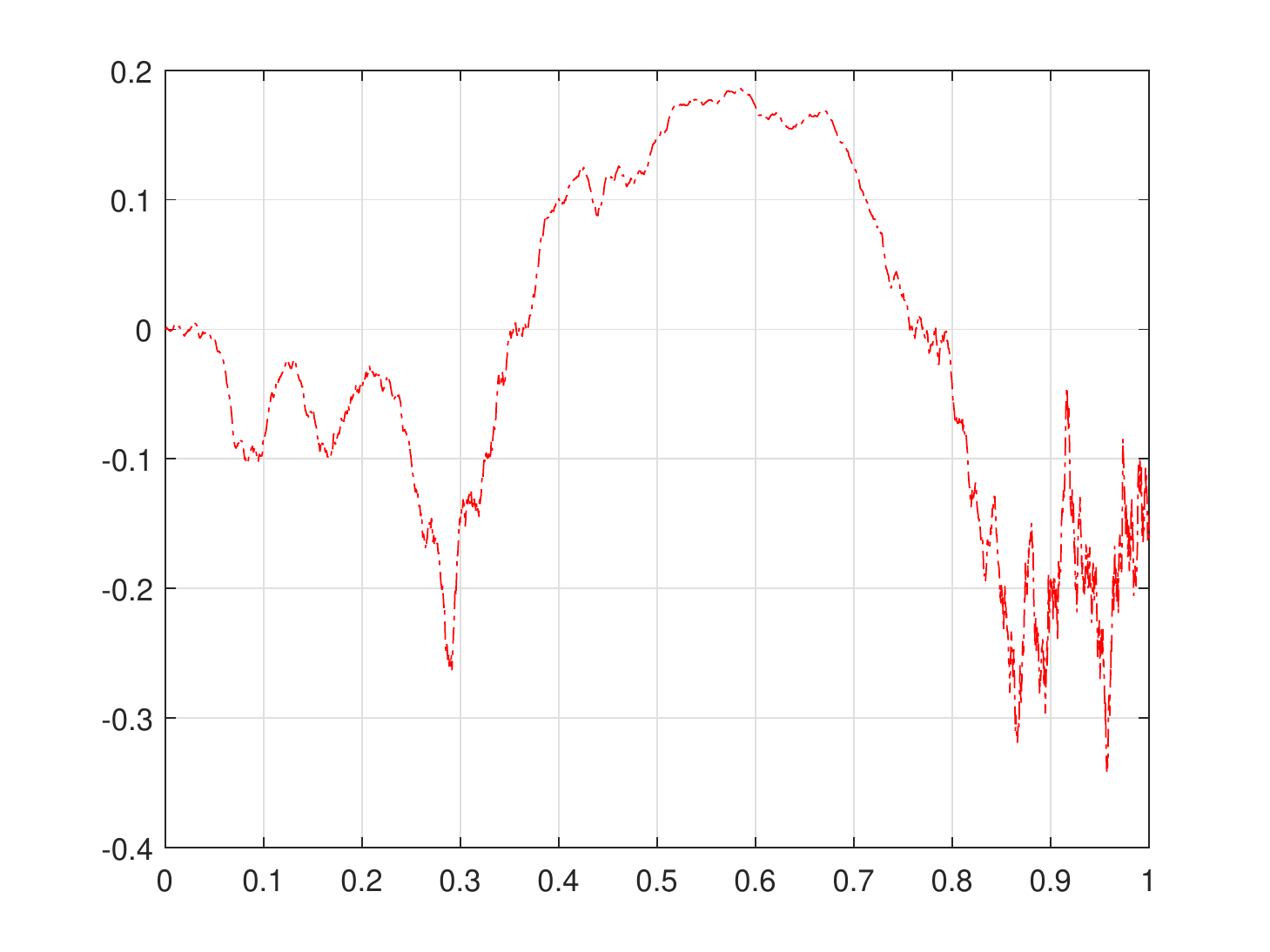}
%      \label{sub:renonc}
                         }
    \subfloat[$\widetilde{X}^J$]{
      \includegraphics[width=0.47\textwidth]{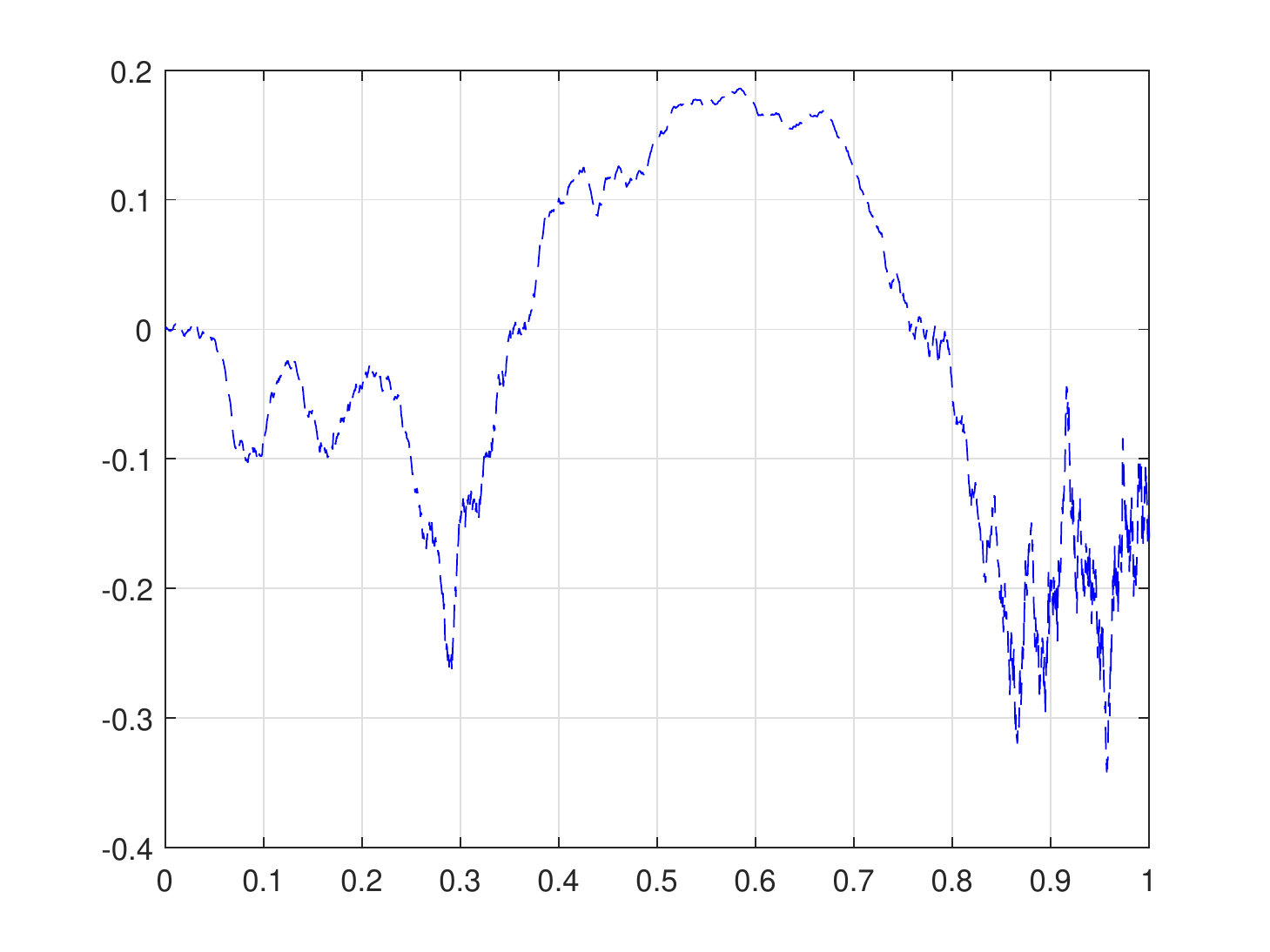}
%      \label{sub:popul}
                         }                       
                         
% %    \caption{Theoretical and numerical}
%     \label{fig:sim2}
  \end{center}
\end{figure}

\newpage

\appendix
\section{Proofs of some auxiliary results}
\label{sec:Appen}

\begin{proof}[Proof of Proposition \ref{prop:holdun}]
By combining $(\ref{def_X})$ with the isometry property of It\^o integral, for any real numbers $t',t"$ satisfying $0~\le~t'~\le~t"~\le~1$, one has
\begin{align}
\ESP\left( |X(t")-X(t')|^2 \right)& = \int_0^1 \ESP\left( |K_{t"}(s)-K_{t'}(s)|^2 \right) ds \nonumber \\
& = \int_{t'}^{t"}\ESP\left( (t"-s)^{2A(s)-1} \right) ds  + \int_0^{t'} \ESP\left( \left| (t"-s)^{A(s)-\frac{1}{2}}-(t'-s)^{A(s)-\frac{1}{2}} \right|^2 \right) \label{maj0:prop:holdun}ds\,.
\end{align}
Let us provide a suitable upper bound for each term in the previous sum. The process $\{A(s) : s \in I \}$ being with values in the deterministic interval $[\unda,\ova] \subset (1/2,1)$, it follows that
\begin{equation}\label{maj1:prop:holdun}
\int_{t'}^{t"}\ESP\left( (t"-s)^{2A(s)-1} \right) ds \le \int_{t'}^{t"} (t"-s)^{2\unda-1}ds = \frac{1}{2\unda} (t"-t')^{2\unda}\,. 
\end{equation}
% Assume now that $s\in [0,t')$ is arbitrary and fixed. By applying the Mean Value Theorem to the function $x\mapsto x^{A(s)-\frac{1}{2}}$ on the interval $[t'-s,t"-s]$, one has 
% \begin{equation}
% (t"-s)^{A(s)-\frac{1}{2}}-(t'-s)^{A(s)-\frac{1}{2}} = \left(A(s)-\frac{1}{2}\right) r^{A(s)-\frac{3}{2}} (t"-t'),
% \end{equation}
% where $r\in (t'-s,t"-s)$. Then, 
% \begin{equation}
% \left|(t"-s)^{A(s)-\frac{1}{2}}-(t'-s)^{A(s)-\frac{1}{2}}\right| \le \left( \unda-\frac{1}{2}\right) (t'-s)^{\unda-\frac{3}{2}} (t"-t').
% \end{equation}
% Next, note that
% \begin{equation*}
% \int_0^{t'} (t'-s)^{2\unda-3} ds = \int_{0}^{t'} u^{2\unda-3}du \le \int_0^1 u^{2\unda-3} du <\ii.
% \end{equation*}
% \textbf{NON CE N EST PAS FINI!}
%\textcolor{red}{
Moreover, using the changes of variables $u=t'-s$ and $v=\frac{u}{t"-t'}$ and standard computations, one gets that 
\begin{align}\label{maj2:prop:holdun}
& \int_0^{t'} \ESP\left( \left| (t"-s)^{A(s)-\frac{1}{2}}-(t'-s)^{A(s)-\frac{1}{2}} \right|^2 \right) ds = \int_0^{t'}\ESP\left( \left| (t"-t'+u)^{A(t'-u)-\frac{1}{2}}-u^{A(t'-u)-\frac{1}{2}}\right|^2 \right)du \nonumber \\
& =\int_{0}^{t'} \ESP \left( |t"-t'|^{2A(t'-u)-1} \left| \left(1+\frac{u}{t"-t'} \right)^{A(t'-u)-\frac{1}{2}} - \left( \frac{u}{t"-t'} \right)^{A(t'-u)-\frac{1}{2}} \right|^2  \right) du \nonumber \\
& \le |t"-t'|^{2\unda-1} \int_{0}^{t'} \ESP \left(  \left| \left(1+\frac{u}{t"-t'} \right)^{A(t'-u)-\frac{1}{2}} - \left( \frac{u}{t"-t'} \right)^{A(t'-u)-\frac{1}{2}} \right|^2  \right) du \nonumber \\
& \le |t"-t'|^{2\unda} \int_{0}^{\frac{t'}{t"-t'}} \ESP \left( \left|(1+v)^{A((1+v)t'-vt")-\frac{1}{2}}-v^{A((1+v)t'-vt")-\frac{1}{2}} \right|^2 \right)dv \nonumber \\
& \le |t"-t'|^{2\unda} \left( \int_{0}^{1} \ESP \left( \left|(1+v)^{A((1+v)t'-vt")-\frac{1}{2}}-v^{A((1+v)t'-vt")-\frac{1}{2}} \right|^2 \right)dv \right. \nonumber \\
& \qquad \qquad \qquad \qquad \left. + \int_{1}^{\ii} \ESP \left( \left|(1+v)^{A((1+v)t'-vt")-\frac{1}{2}}-v^{A((1+v)t'-vt")-\frac{1}{2}} \right|^2 \right)dv  \right) \, .
%& \le  \left( \int_0^1 dv + \int_{1}^{\ii} v^{2\ova-3} dv \right) |t"-t'|^{2\unda} \, .
\end{align}
Next, observe that, one can derive from the inequality $|a^{\beta}-b^{\beta}|\le |a-b|^{\beta}$, where $\beta\in (0,1)$ and all $a,b\in (0,+\infty)$ are arbitrary, that
\begin{equation}\label{maj21:prop:holdun}
 \int_{0}^{1} \ESP \left( \left|(1+v)^{A((1+v)t'-vt")-\frac{1}{2}}-v^{A((1+v)t'-vt")-\frac{1}{2}} \right|^2 \right)dv  \leq \int_0^1 dv = 1\,.
\end{equation}
Also observe that
by applying, for any fixed $v\in [1,+\infty)$, the Mean Value Theorem to the function $x \mapsto (x+v)^{A((1+v)t'-vt")-\frac{1}{2}}$, and by using (\ref{eqn:bornesA}), one obtains that
\begin{equation*}\label{maj22:prop:holdun}
 \left|(1+v)^{A((1+v)t'-vt")-\frac{1}{2}}-v^{A((1+v)t'-vt")-\frac{1}{2}} \right|
 %=  \left(A\big((1+v)t'-vt"\big)-\frac{1}{2}\right) \left|(v_0+v)^{A((1+v)t'-vt")-\frac{3}{2}} \right|  
 \leq   v^{\overline{a}-\frac{3}{2}}\, , 
\end{equation*}
and consequently that
\begin{equation}\label{maj23:prop:holdun}
 \int_{1}^{\ii} \ESP \left( \left|(1+v)^{A((1+v)t'-vt")-\frac{1}{2}}-v^{A((1+v)t'-vt")-\frac{1}{2}} \right|^2 \right)dv \leq  \int_{1}^{\ii} v^{2\ova-3}dv  = \frac{1}{2-2\ova} \, .
\end{equation}

%and Lemma~\ref{lem_accroissK}.
%}
% By setting $c_1=(\unda-1/2)^2 \int_0^1 u^{2\unda-3} du$, one gets
% \begin{equation}\label{maj2:prop:holdun}
% \int_0^{t'} \ESP\left( \left| (t"-s)^{A(s)-\frac{1}{2}}-(t'-s)^{A(s)-\frac{1}{2}} \right|^2 \right) ds \le c_1 (t"-t')^2
% \end{equation}
\noindent Finally, it results from (\ref{maj0:prop:holdun}), (\ref{maj1:prop:holdun}), (\ref{maj2:prop:holdun}), (\ref{maj21:prop:holdun}) and (\ref{maj23:prop:holdun}) that  
\begin{equation*}
\label{eq:A-kol}
\ESP\left( |X(t")-X(t')|^2 \right) \le c_1 |t"-t'|^{2\unda}\,,
\end{equation*}
where $c_1>0$ is a constant not depending on $t',t''$. Thus, one can derive from the Kolmogorov-\v{C}entsov's continuity theorem, given in \cite{karatzas2012brownian}, that the stochastic process $\{X(t) : t\in I\}$ admits a modification %, denoted also by  $\{X(t): t\in I\}$ 
whose sample paths are, with probability 1, H\"older continuous function of any order $\zeta \in (0,\unda-1/2)$.
\end{proof}

\begin{proof}[Proof of Lemma \ref{lem_accroissK}]
Let us fix $s',s'',t$ three arbitrary real numbers satisfying $0 \leq s' \leq s'' < t \leq 1$. %and $\omega \in \Omega$. Let us also consider the function $Q$ of two variables defined on $[0,t) \times (\frac{1}{2},1)$ as
%\begin{equation}
%Q(x,y) := (t-x)^{y-\frac{1}{2}}.
%\end{equation}
It follows from the triangular inequality and (\ref{def_Kt}) that
\begin{align}\label{eq_maj}
\big|K_t(s') - K_t(s'') \big| 
 = & \big|(t-s')^{A(s')-\frac{1}{2}} - (t-s'')^{A(s'')-\frac{1}{2}} \big| \nonumber \\
 \leq &  \big|(t-s')^{A(s')-\frac{1}{2}} - (t-s'')^{A(s')-\frac{1}{2}} \big|   + \big|(t-s'')^{A(s')-\frac{1}{2}} - (t-s'')^{A(s'')-\frac{1}{2}} \big|  \, .
% = & \left| \partial_x Q(s_0, A(s', \omega))\right| |s'-s''| + \left| \partial_y Q(s'',a)\right|  \big|A(s',\omega) - A(s'',\omega) \big|  \label{eq_maj}
\end{align}
First, let us apply the Mean Value Theorem to the function $x \mapsto (t-x)^{A(s')-\frac{1}{2}}$. We obtain, for some $s_0 \in (s',s'')$, that
\begin{equation}\label{eq_partialx}
 \big|(t-s')^{A(s')-\frac{1}{2}} - (t-s'')^{A(s')-\frac{1}{2}} \big| \leq \left( A(s') - \frac{1}{2} \right) (t-s_0)^{A(s') - \frac{3}{2}} (s''-s')\leq \left(\overline{a}- \frac{1}{2} \right) (t-s'')^{\underline{a} - \frac{3}{2}} (s''-s')  \, .
\end{equation}
Secondly, by applying the Mean Value Theorem to the function $x \mapsto (t-s'')^{x- \frac{1}{2}}$, we obtain, for some $d\in \big [\min\{A(s'), A(s'')\},\max\{A(s'), A(s'')\}\big]\subseteq [\underline{a},\overline{a}]$, that 
\begin{align}\label{eq_partialy}
\big|(t-s'')^{A(s')-\frac{1}{2}} - (t-s'')^{A(s'')-\frac{1}{2}} \big|  & = \left| \log(t-s'') \right| (t-s'')^{d-\frac{1}{2}} \big| A(s')-A(s'') \big| \nonumber \\
& \leq \left| \log(t-s'') \right| (t-s'')^{\underline{a}-\frac{1}{2}} \big| A(s')-A(s'') \big| \, .
\end{align}
 Finally, using the fact that
\begin{equation*}
\sup_{x \in (0,1]} \big| \log(x) \big|  x^{\underline{a}-\frac{1}{2}} < + \infty
\end{equation*}
and using (\ref{eq_maj}), (\ref{eq_partialx}) and (\ref{eq_partialy}), it follows that (\ref{eq:lem_accroissK}) is satisfied.
 %In order to get (\ref{eq:lem_accroissK2}), it suffices to notice that for any nonnegative real numbers $a$ and $b$, one has
%\begin{equation}
%(a+b)^2 \leq 2 (a^2 + b^2).
%\end{equation}
\end{proof}

\begin{proof}[Proof of Lemma \ref{lem:conv:XJtXt}]
One can derive from (\ref{eqn:XJt3}), (\ref{eqn:XJt}), the triangular and the Cauchy-Schwarz inequalities, and the equality
\begin{equation}\label{eqn:B}
\ESP \left( \big| \Delta B_{J,l} \big|^2\right) = 2^{-J}, 
\end{equation}
that, for all $J \in  \Z_+$ and all $l \in \{0, \dots, 2^J-1\}$, one has 
\begin{equation}\label{maj:lem:conv:XJtXt}
 \ESP\left( \left| X^J(t)-\widetilde{X}^J(t)  \right| \right) \leq \sum_{l=0}^{2^J-1}  \ESP\left( \left| \overline{K}_t^{J,l} - K_t(\delta_{J,l}) \right|  \left| \Delta B_{J,l}  \right| \right) \leq 2^{-\frac{J}{2}} \sum_{l=0}^{2^J-1}  \ESP\left( \left| \overline{K}_t^{J,l} - K_t(\delta_{J,l}) \right|^2    \right)^{\frac{1}{2}}.
\end{equation}
Moreover, (\ref{eqn:mvKt}) implies that
\begin{equation}\label{maj2:lem:conv:XJtXt}
\left| \overline{K}_t^{J,l} - K_t(\delta_{J,l}) \right| \leq 2^J \int_{\delta_{J,l}}^{\delta_{J,l+1}} \big| K_t(s) - K_t(\delta_{J,l}) \big| ds \, .
\end{equation}
Observe that in the case where $l \geq [2^Jt]+1$, $[\cdot]$ being the integer part function, one has that $\delta_{J,l} > t$ and consequently that
\begin{equation*}
\int_{\delta_{J,l}}^{\delta_{J,l+1}} \big| K_t(s) - K_t(\delta_{J,l}) \big| ds = 0 \, ;
\end{equation*}
therefore (\ref{maj2:lem:conv:XJtXt}) implies that
\begin{equation}\label{maj31:lem:conv:XJtXt}
\left| \overline{K}_t^{J,l} - K_t(\delta_{J,l}) \right| =0 \, .
\end{equation}
Also, observe that in the case where $l=[2^Jt]$, using the triangular inequality and (\ref{eq:Lencadr}) one has
\begin{equation}
\int_{\delta_{J,[2^Jt]}}^{\delta_{J,[2^Jt]+1}} \big| K_t(s) - K_t(\delta_{J,[2^Jt]}) \big| ds  \leq 2^{-J+1}\,;
\end{equation}
therefore (\ref{maj2:lem:conv:XJtXt}) entails that 
\begin{equation}\label{maj32:lem:conv:XJtXt}
\left| \overline{K}_t^{J,[2^Jt]} - K_t(\delta_{J,[2^Jt]}) \right| \leq 2 \, .
\end{equation}
Let us now study the last case where $l< [2^{J}t]$. One has $[\delta_{J,l},\delta_{J,l+1}) \subseteq [0,t)$. Thus, one can derive from Lemma~\ref{lem_accroissK} that 
\begin{align}\label{maj34:lem:conv:XJtXt}
\int_{\delta_{J,l}}^{\delta_{J,l+1}} \big| K_t(s) - K_t(\delta_{J,l}) \big| ds 
& \leq   c_0 \left( \int_{\delta_{J,l}}^{\delta_{J,l+1}} \big| A(s) - A(\delta_{J,l}) \big| ds  + \int_{\delta_{J,l}}^{\delta_{J,l+1}} ( t-s)^{\underline{a}-\frac{3}{2}} |s-\delta_{J,l}| ds \right)  \nonumber \\
& \leq  c_0 \left( \int_{\delta_{J,l}}^{\delta_{J,l+1}} \big| A(s) - A(\delta_{J,l}) \big| ds  + 2^{-J }\int_{\delta_{J,l}}^{\delta_{J,l+1}} ( t-s)^{\underline{a}-\frac{3}{2}} ds \right)\, .
\end{align}
Then, using (\ref{maj2:lem:conv:XJtXt}), (\ref{maj34:lem:conv:XJtXt}), the inequality $(x+y)^2\le 2(x^2+y^2)$ where $x,y$ are arbitrary real numbers, the Cauchy-Schwarz inequality and (\ref{hyp:Ahold1}), one gets that 
\begin{align}\label{maj33:lem:conv:XJtXt}
\ESP\left( \left| \overline{K}_t^{J,l} - K_t(\delta_{J,l}) \right|^2    \right) 
& \leq   2^{2J}\ESP \left( \left( \int_{\delta_{J,l}}^{\delta_{J,l+1}} \big| K_t(s) - K_t(\delta_{J,l}) \big| ds \right)^2 \right) \nonumber \\ 
& \leq 2^{2J} c_0^2 \ESP \left(  \left( \int_{\delta_{J,l}}^{\delta_{J,l+1}} \big| A(s) - A(\delta_{J,l}) \big| ds  + 2^{-J }\int_{\delta_{J,l}}^{\delta_{J,l+1}} ( t-s)^{\underline{a}-\frac{3}{2}} ds \right)^2 \right) \nonumber \\
& \leq 2^{2J+1} c_0^2 \left(  \ESP \left( \left( \int_{\delta_{J,l}}^{\delta_{J,l+1}}  \big| A(s) - A(\delta_{J,l}) \big|  ds \right)^2 \right)  + 2^{-2J }\left(\int_{\delta_{J,l}}^{\delta_{J,l+1}} ( t-s)^{\underline{a}-\frac{3}{2}} ds \right)^2 \right) \nonumber \\
& \leq 2^{2J+1} c_0^2 \left(  2^{-J} \int_{\delta_{J,l}}^{\delta_{J,l+1}} \ESP \left( \big| A(s) - A(\delta_{J,l}) \big|^2 \right) ds  + 2^{-2J }\left(\int_{\delta_{J,l}}^{\delta_{J,l+1}} ( t-s)^{\underline{a}-\frac{3}{2}} ds \right)^2 \right) \nonumber \\
& \leq 2 c_0^2 \left(  c 2^{-2\rho J} + \left(\int_{\delta_{J,l}}^{\delta_{J,l+1}} ( t-s)^{\underline{a}-\frac{3}{2}} ds \right)^2 \right) \, .
\end{align}

Finally, it follows from (\ref{maj:lem:conv:XJtXt}), (\ref{maj31:lem:conv:XJtXt}), (\ref{maj32:lem:conv:XJtXt}) and (\ref{maj33:lem:conv:XJtXt}) that
%\begin{align}
% \ESP\left( \left| X^J(t)-\widetilde{X}^J(t)  \right| \right)  & \leq 2^{-\frac{J}{2}} \sum_{l=0}^{2^J-1}  \ESP\left( \left| \overline{K}_t^{J,l} - K_t(\delta_{J,l}) \right|^2    \right)^{\frac{1}{2}} \nonumber \\
% & \le 2^{-\frac{J}{2}} \ESP\left( \left| \overline{K}_t^{J,[2^J t]} - K_t(\delta_{J,[2^J t]}) \right|^2    \right)^{\frac{1}{2}} + 2^{-\frac{J}{2}} \sum_{l=0}^{[2^Jt]-1} \ESP\left( \left| \overline{K}_t^{J,l} - K_t(\delta_{J,l}) \right|^2    \right)^{\frac{1}{2}} \nonumber \\
% & \le c \left( 2^{-\frac{J}{2}} (1+2^{-\rho J}) + 2^{-\frac{J}{2}} \sum_{l=0}^{[2^Jt]-1} \left(  2^{-\rho J} +  \int_{\delta_{J,l}}^{\delta{J,l+1}} ( t-s)^{\underline{a}-\frac{3}{2}} ds  \right)  \right) \nonumber\\
% & \le c \left( 2^{-J(\rho-1/2)}+ 2^{-\frac{J}{2}} \sum_{l=0}^{[2^Jt]-1}  2^{-\rho J} +  2^{-\frac{J}{2}} \int_{0}^{t} ( t-s)^{\underline{a}-\frac{3}{2}} ds \right) \nonumber \\
%& \le c \left( 2^{-J(\rho-1/2)}+2^{-\frac{J}{2}} 2^{-\rho J} 2^J + 2^{-\frac{J}{2}} \right)  \le c 2^{-J(\rho-1/2)}.
% \end{align}
\begin{align*}
\ESP\left( \left| X^J(t)-\widetilde{X}^J(t)  \right| \right)  & \leq 2^{-\frac{J}{2}} \sum_{l=0}^{2^J-1}  \ESP\left( \left| \overline{K}_t^{J,l} - K_t(\delta_{J,l}) \right|^2    \right)^{\frac{1}{2}} \nonumber \\
 & \le 2^{-\frac{J}{2}} \ESP\left( \left| \overline{K}_t^{J,[2^J t]} - K_t(\delta_{J,[2^J t]}) \right|^2    \right)^{\frac{1}{2}} + 2^{-\frac{J}{2}} \sum_{l=0}^{[2^Jt]-1} \ESP\left( \left| \overline{K}_t^{J,l} - K_t(\delta_{J,l}) \right|^2    \right)^{\frac{1}{2}} \nonumber \\
 & \le 2^{-\frac{J}{2}+1} +  2^{-\frac{J-1}{2}}  c_0 \sum_{l=0}^{[2^Jt]-1} \left( \sqrt{c} 2^{-\rho J} +  \int_{\delta_{J,l}}^{\delta_{J,l+1}} ( t-s)^{\underline{a}-\frac{3}{2}} ds  \right)   \nonumber\\
 & \le 2^{-\frac{J}{2}+1} +  2^{-\frac{J-1}{2}}  c_0  \left( \sqrt{c}\sum_{l=0}^{[2^Jt]-1}  2^{-\rho J} +   \int_{0}^{t} ( t-s)^{\underline{a}-\frac{3}{2}} ds \right) \nonumber \\
& \le 2^{-\frac{J}{2}+1} + 2^{-\frac{J-1}{2}} c_0 \left(\sqrt{c}  2^{-\rho J} 2^J + \frac{t^{\underline{a}-\frac{1}{2}}}{\underline{a}-\frac{1}{2}} \right)
%& \le 2^{-\frac{J}{2}+1} + \sqrt{2} c_1 \left(\sqrt{c}  2^{- J(\rho- \frac{1}{2})} + \frac{1}{\underline{a}-\frac{1}{2}}2^{-\frac{J}{2}} \right)  
 \le c_2 2^{-J\left(\rho-\frac{1}{2}\right)} \, ,
 \end{align*}
 where $c_2:= 2 +  \sqrt{2}c_0 \left(\sqrt{c} + \frac{1}{\underline{a}-\frac{1}{2}} \right)$. Then the last inequality and the fact that $\rho>1/2$ show that (\ref{eqn:conv:XJtXt}) is satisfied.
\end{proof}

\section*{Acknowledgements}
This work has been partially supported by the Labex CEMPI (ANR-11-LABX-0007-01), the F.R.S.-FNRS and the GDR 3475 (Analyse Multifractale).


\begin{thebibliography}{12}


%%%%%%%%%%%%%%%%%%%%%%%%
\bibitem{ayache2007wavelet}
Ayache, A., Jaffard, S. and Taqqu, M.S. (2007)
{\em Wavelet construction of generalized multifractional processes}, Rev. Mat. Iberoam.,
\textbf{23}, 327--370.


\bibitem{ayache2005multifractional}
Ayache, A. and Taqqu, M.S. (2005)
{\em Multifractional processes with random exponent}, Publ. Mat.,
\textbf{49}, 459--486.


\bibitem{ayache2003rate}
Ayache, A. and Taqqu, M.S. (2003)
{\em Rate optimality of wavelet series approximations of fractional Brownian motion},
J. Fourier Anal. Appl., \textbf{9}, 451--471.

\bibitem{benassi97}
Benassi, A.,  Jaffard, S. and Roux, D. (1997)
{\em Elliptic Gaussian random processes}, 
Rev. Mat. Iberoam., \textbf{13} , 19--90.


%%%%%%%%%%%%%%%%%%%%%%%%%
\bibitem{Bi}
Bianchi, S. (2005)
\emph{Pathwise identification of the memory function of multifractional {B}rownian motion with application to finance}, Int. J. Theor. Appl. Finance,
\textbf{8}, 255--281.
%\MR{2130615}

%%%%%%%%%%%%%%%%%%%%%%%%%
%\bibitem{BiP}
%Bianchi, S. and Pianese, A. (2008)
%\emph{Multifractional properties of stock indices decomposed by
%filtering their pointwise {H}\"older regularity}, Int. J. Theor. Appl. Finance, \textbf{11}, 567--595.
%%\MR{2455303}

%%%%%%%%%%%%%%%%%%%%%%%%%
\bibitem{BiPP1}
Bianchi, S., Pantanella, A. and Pianese, A. (2012) {\em Modeling and simulation of
currency exchange rates using MPRE}, International Journal of Modeling
and Optimization, \textbf{2}, 309--314.

\bibitem{BiP1}
Bianchi, S. and Pianese, A. (2014)
\emph{Multifractional processes in finance}, 
Risk and Decision Analysis, \textbf{5}, 1--22.

\bibitem{BiPP2}
Bianchi, S.,  Pantanella, A. and Pianese, A. (2013)
\emph{Modeling stock prices by multifractional {B}rownian motion: an improved estimation of the pointwise regularity}, Quant. Finance,
\textbf{13}, 1317--1330.

\bibitem{Dau92}
Daubechies, I. (1992)
\emph{Ten lectures on wavelets}, 61, Society for Industrial Mathematics.

\bibitem{doukhan:oppenheim:taqqu:2002-livre} 
Doukhan, P., Oppenheim, G. and Taqqu, M.~S.  (2002)
\newblock {\em Theory and Applications of Long-range Dependence}, 
\newblock Birkh{\"a}user, Boston. 

\bibitem{haar}
Haar, A. (1910)
\emph{ Zur theorie der orthogonalen fuctionnensysteme}, 
Mathematische Annalen, \textbf{69}, 331--371.

\bibitem{karatzas2012brownian}
Karatzas, I. and Shreve, S. (2012)
\emph{Brownian motion and stochastic calculus}, 113, Springer Science.

\bibitem{kolmogorov:1940} 
Kolmogorov, A.~N. (1940)
\newblock {\em Wienersche {S}piralen und einige andere interessante {K}urven im 
  {H}ilbertschen raum}, Comptes Rendus (Doklady) de l'Acad{\'e}mie des Sciences de l' 
  URSS (N.S.), \textbf{26}, 115--118.

\bibitem{mandelbrot:van_ness:1968} 
Mandelbrot, B.~B.  and {Van Ness}, J.~W. (1968) 
\newblock {\em Fractional {B}rownian motions, fractional noises and applications},  SIAM Review, \textbf{10}, 422--437. 
 
\bibitem{meyer:sellan:taqqu:1999} 
Meyer, Y., Sellan, F. and Taqqu, M.~S. (1999) 
\newblock {\em Wavelets, generalized white noise and fractional integration: the 
  synthesis of fractional {B}rownian motion}, J. Fourier Anal. Appl., \textbf{5}, 465--494. 

\bibitem{Meyer92}
Meyer, Y. (1992)
\emph{Wavelets and operators}, 2, Cambridge Univ Press.

\bibitem{Meyer90}
Meyer, Y. (1990)
\emph{Ondelettes et Op\'erateurs, volume 1}, Hermann, Paris.

\bibitem{peltier95}
Peltier, R. F. and L\'evy V\'ehel, J. (1995)
\emph{Multifractional Brownian motion: definition and preliminary results}, Rapport de recherche de l’INRIA, \textbf{2645}. 

\bibitem{samorodnitsky:taqqu:1994book}
Samorodnitsky, G. and Taqqu, M.~S. (1994)
\newblock {\em {\it {S}table {N}on-{G}aussian {P}rocesses: {S}tochastic
  {M}odels with {I}nfinite {V}ariance}}.
\newblock Chapman and Hall, New York, London.

\bibitem{stoev:06}
Stoev, S. and Taqqu, M. S. (2006)
\emph{How rich is the class of multifractional Brownian motions?} Stochastic Process. Appl., \textbf{116}, 200--221.

\bibitem{surgailis:08}
Surgailis, D. (2008)
\emph{Nonhomogeneous fractional integration and multifractional processes} 
Stochastic Process. Appl., \textbf{118}, 171--198.

\bibitem{wojt}
Wojtaszczyk, P. (1997)
\emph{A mathematical introduction to wavelets}, Cambridge University Press.



\end{thebibliography}
\end{document}